\newsavebox{\circlebox}
\savebox{\circlebox}{\fontencoding{OMS}\selectfont\Large\char13}
\newlength{\circleboxwdht}
\newcommand{\centercircle}[1]{%
  \setlength{\circleboxwdht}{\wd\circlebox}%
  \addtolength{\circleboxwdht}{\dp\circlebox}%
  \raisebox{0.4\dp\circlebox}{%
    \parbox[][\circleboxwdht][c]{\wd\circlebox}{\centering#1}}%
  \llap{\usebox{\circlebox}}%
}
\newtheorem{definition}{Definition}[section]
\newtheorem{theorem}[definition]{Theorem}
\newtheorem{proposition}[definition]{Proposition}
\newtheorem{example}[definition]{Example}
\newtheorem{corollary}[definition]{Corollary}
\newtheorem{lemma}[definition]{Lemma}
\newcommand{\Bf}[1]{\boldsymbol{#1}}
\newcommand{\transpose}[1]{{}^{t}#1}
\newcommand{\dis}[1]{\displaystyle{#1}}
\begin{document}

\title{On a higher level extension of Leclerc-Thibon product theorem in $q$-deformed Fock spaces}
\author{Kazuto Iijima \\
{ \footnotesize Graduate School of Mathematics, Nagoya University, Chikusa-ku, Nagoya 464-8602, Japan } \\
{ \footnotesize kazuto.iijima@math.nagoya-u.ac.jp } }

\date{}
\maketitle

\thispagestyle{empty}

\begin{abstract}
The $q$-deformed Fock spaces of higher levels were introduced by Jimbo-Misra-Miwa-Okado. 
Uglov defined a canonical bases in $q$-deformed Fock spaces of higher levels.
Leclerc-Thibon showed a product theorem in $q$-deformed Fock spaces of level one. 
The product theorem is regarded as a formal $q$-analogue of the tensor product theorem of level one. 
In this paper, we show a higher level analogue of Leclerc-Thibon product theorem under a suitable multi charge condition. 
\end{abstract}

%
%

\section{Introduction}

The $q$-deformed Fock spaces of higher levels were introduced by Jimbo-Misra-Miwa-Okado \cite{JMMO}. 
For integers $n \ge 2$, $\ell \ge 1$ and  
	a multi charge $\boldsymbol{s} = (s_{1} , \ldots , s_{\ell}) \in \mathbb{Z}^{\ell}$, 
	the $q$-deformed Fock space $\boldsymbol{F}_{q}[\boldsymbol{s}]$ of level $\ell$ is 
	the $\mathbb{Q}(q)$-vector space whose basis are indexed by $\ell$-tuples of Young diagrams. i.e. 
	$\{ | \boldsymbol{\lambda} ; \boldsymbol{s} \rangle \, | \, \boldsymbol{\lambda} \in \Pi^{\ell} \}$, 
	where $\Pi$ is the set of Young diagrams. 

The Fock space $\Bf{F}_{q}[\Bf{s}]$ is endowed with the action of bosons $B_{m}$ 
	and they generate a Heisenberg algebra \cite{U}. 
For a partition $\lambda$, we define $S_{\lambda}$ as a $\mathbb{Q}$-linear combination of products of elements $B_{-m}$. 
	(See \S 3.1 for the precise definition.) 
Quantum group $U_{q}(\hat{\mathfrak{sl}}_{n})$ also acts on $\boldsymbol{F}_{q}[\boldsymbol{s}]$ as level $q^{\ell}$. 
These actions commute on $\boldsymbol{F}_{q}[\boldsymbol{s}]$. 

The canonical bases 
$\{ G^{+}(\boldsymbol{\lambda} ; \boldsymbol{s}) \, | \, \boldsymbol{\lambda} \in \Pi^{\ell} \}$ and 
$\{ G^{-}(\boldsymbol{\lambda} ; \boldsymbol{s}) \, | \, \boldsymbol{\lambda} \in \Pi^{\ell} \}$ are 
	bases of the Fock space $\boldsymbol{F}_{q}[\boldsymbol{s}]$ 
	that are invariant under a certain involution $\overline{\phantom{a}}$ \cite{U}.

A partition $\lambda = (\lambda_{1} , \lambda_{2} , \cdots )$ is {\it $n$-restricted} if 
	$0 \le \lambda_{i} - \lambda_{i+1} < n$ for all $i=1,2,\cdots$. 
Each partition $\lambda$ can be uniquely written as $\lambda = \widetilde{\lambda} + n \check{\lambda}$, 
	where $\widetilde{\lambda}, \check{\lambda} \in \Pi$ and $\widetilde{\lambda}$ is $n$-restricted. 
In the case of $\ell =1$, Leclerc and Thibon introduced linear operators 
	$S_{\lambda}$ as a $\mathbb{Q}$-linear combination of products of elements $B_{-m}$ and showed the following theorem
	(See \S 3.1 for the precise definitions) :

\smallskip 

{ \bf Theorem A. }
(Leclerc-Thibon\cite{LT}) \\
\it Suppose that $\ell =1$. 
Let $\lambda = \widetilde{\lambda} + n \check{\lambda}$. Then, 
\begin{equation*}
G^{-}(\lambda) = S_{\check{\lambda}} G^{-} (\widetilde{\lambda}) .
\end{equation*}

\smallskip

\noindent 
\rm 
Our main result is a higher level version of Theorem A. 
For a non-negative integer $M$, 
	a basis vector $| \boldsymbol{\lambda} ; \boldsymbol{s} \rangle$ is called {\it $M$-dominant} if 
$s_{i} - s_{i+1} \geq M + | \boldsymbol{\lambda} | $ 
for all $i=1,2,\cdots,\ell-1$. 
For $\boldsymbol{\lambda} = (\lambda^{(1)} , \lambda^{(2)} , \cdots , \lambda^{(\ell)}) \in \Pi^{\ell}$, 
define $\widetilde{\Bf{\lambda}}$ and $\check{\Bf{\lambda}}$ as 
$\widetilde{\Bf{\lambda}} = (\widetilde{\lambda^{(1)}} , \widetilde{\lambda^{(2)}} , \cdots , \widetilde{\lambda^{(\ell)}}) 
	\quad , \quad 
	\check{\Bf{\lambda}} = (\check{\lambda^{(1)}} , \check{\lambda^{(2)}} , \cdots , \check{\lambda^{(\ell)}}) $. 

\smallskip

{\bf Theorem B. (Theorem \ref{maincor})} \\
\it
If $| \Bf{\lambda} ; \Bf{s} \rangle$ is $0$-dominant, then 
\begin{equation*}
	G^{-}(\Bf{\lambda} ; \Bf{s}) 
	= S_{ \check{\Bf{\lambda}} } G^{-} (\widetilde{ \Bf{\lambda} } ; \Bf{s}) , 
\end{equation*}
where $S_{ \check{\Bf{\lambda}} }$ is a linear operator on $\Bf{F}_q[\Bf{s}]$ defined in Definition \ref{def.4.11}. 

\smallskip
\rm

Now we explain a expected connections of canonical bases $G^{\pm}(\Bf{\lambda} ; \Bf{s})$ with representation theory. 
Define matrices  
$\Delta^{+}(q) = (\Delta^{+}_{\boldsymbol{\lambda},\boldsymbol{\mu}}(q))
_{\boldsymbol{\lambda},\boldsymbol{\mu}}$ and 
$\Delta^{-}(q) = (\Delta^{-}_{\boldsymbol{\lambda},\boldsymbol{\mu}}(q))
_{\boldsymbol{\lambda},\boldsymbol{\mu}}$ by 
$G^{+}(\boldsymbol{\lambda} ; \boldsymbol{s}) = \sum_{\boldsymbol{\mu}} 
\Delta^{+}_{\boldsymbol{\lambda},\boldsymbol{\mu}}(q) \, | \, \boldsymbol{\mu} ; \boldsymbol{s} \rangle $, 
$G^{-}(\boldsymbol{\lambda} ; \boldsymbol{s}) = \sum_{\boldsymbol{\mu}} 
\Delta^{-}_{\boldsymbol{\lambda},\boldsymbol{\mu}}(q) \, | \, \boldsymbol{\mu} ; \boldsymbol{s} \rangle $.
We call $\Delta^{+}_{\boldsymbol{\lambda},\boldsymbol{\mu}}(q)$ and $\Delta^{-}_{\boldsymbol{\lambda},\boldsymbol{\mu}}(q)$ 
{\it $q$-decomposition numbers}. 
These $q$-decomposition matrices play an important role in representation theory. 
However it is not known that there is a explicit combinatorial formula 
	which expresses $q$-decomposition matrices as an one dimensional summation.  

In the case of $\ell = 1$, 
Varagnolo-Vasserot \cite{VV1} proved that $\Delta^{+}(1)$ coincides with the decomposition matrix of $v$-Schur algebra. 
Ariki defined a $q$-analogue of decomposition numbers of $v$-Schur algebra by using Khovanov-Lauda's grading, 
	and proved that it coincides with the $q$-decomposition numbers \cite{A}. 

We recall the tensor product theorem by Lusztig \cite{Lus}. 
Let $\zeta \in \mathbb{C}$ be such that $\zeta^2$ is a primitive $n$-th root of unity. 
Let $\mathrm{Fr}$ denote the Frobenius map from the quantum enveloping algebra $U_{\zeta}(\mathfrak{gl}_r)$ 
	to the classical enveloping algebra $U(\mathfrak{gl}_r)$. 
Given a $U(\mathfrak{gl}_r)$-module $M$, one can define a $U_{\zeta}(\mathfrak{gl}_r)$-modulte $M^{\mathrm{Fr}}$ 
	by composing the action of $U(\mathfrak{gl}_r)$ with Fr. 

\smallskip 

{ \bf Theorem }
(Steinberg-Lusztig \cite{Lus}) \\
\it Suppose that $\ell =1$. 
Let $\lambda = \widetilde{\lambda} + n \check{\lambda}$. Then, 
\begin{equation*}
L(\lambda) = L(\widetilde{\lambda}) \otimes W({\check{\lambda}})^{\mathrm{Fr}} ,
\end{equation*}
where $L(\lambda)$ (resp. $W(\mu)$) is the simple (resp. classical Weyl) module with highest weight $\lambda$ ($\mu$). 

\smallskip

\rm 
Since the simple module $L(\lambda)$ corresponds to the canonical basis $G^{-}(\lambda)$ in the case of $\ell = 1$, 
	Theorem A is a Fock space version of Steinberg-Lusztig's tensor product theorem. (see \cite{LT} for details.)

For $\ell \ge 2$, Yvonne \cite{Y} conjectured that the matrix $\Delta^{+}(q)$ coincides with 
the $q$-analogue of the decomposition matrices of cyclotomic Schur algebras at a primitive $n$-th root of unity 
under a suitable condition on a multi charge and a proof of the conjecture is presented by 
	Stroppel-Webster \cite[Theorem D]{StWe}. 

Let $\mathcal{O}_{\boldsymbol{s}}( \ell , 1, m)$ be the category $\mathcal{O}$ of rational Cherednik algebra of 
	$( \mathbb{Z} / \ell \mathbb{Z} ) \wr \mathfrak{S}_{m}$ associated with multi charge $\boldsymbol{s}$ \cite{GGOR}. 
Rouquier \cite[Theorem 6.8, \S 6.5]{R2} conjectured that, for an arbitrary multi charge, 
	the multiplicities of simple modules in standard modules in $\mathcal{O}_{\boldsymbol{s}}( \ell , 1, m)$  
	are equal to the corresponding coefficients $\Delta^{+}_{\boldsymbol{\lambda},\boldsymbol{\mu}}(q)$, 
	where $m = | \boldsymbol{\lambda} | = | \boldsymbol{\mu} |$. 
Shan showed that $\oplus_{m \ge 0} \mathcal{O}_{\boldsymbol{s}}( \ell , 1, m)$ 
	categorify $\boldsymbol{F}_{1}[\boldsymbol{s}]$ \cite{S}.  
More generally, it is expected that, together with a suitable grading, $\oplus_{m \ge 0} \mathcal{O}_{\boldsymbol{s}}( \ell , 1, m)$ 
	should categorify $\boldsymbol{F}_{q}[\boldsymbol{s}]$. 
For the detail of correspondence between the charges of $\mathcal{O}_{\boldsymbol{s}}( \ell , 1, m)$ and 
	the charges of Fock spaces, see \cite{R2}. 

%

\smallskip 

This paper is organized as follows. 
In Section 2, we review the $q$-deformed Fock spaces of higher levels and their canonical bases. 
In Section 3, we review the $q$-analogue of tensor product theorem by Leclerc-Thibon. 
In Section 4, we state the main results and prove them except Proposition \ref{mainconj}.  
In Section 5, we prove Proposition \ref{mainconj}.

\subsection*{Acknowledgments}

I am deeply grateful to Hyohe Miyachi, Soichi Okada, Toshiaki Shoji and Kentaro Wada for their advice.

%
%

\subsection*{Notations}

For a positive integer $N$, a \it{partition} \rm{of} $N$ is 
a non-increasing sequence of non-negative integers summing to $N$. 
We write $|\lambda| = N$ if $\lambda$ is a partition of $N$.
The \it{length} $l(\lambda)$ \rm{of} $\lambda$ is the number of non-zero components of $\lambda$.
And we use the same notation $\lambda$ to represent the Young diagram corresponding to $\lambda$.
For an $\ell$-tuple $\boldsymbol{\lambda} = (\lambda^{(1)} , \lambda^{(2)} , \cdots , \lambda^{(\ell)})$ of Young diagrams, 
	we put $|\boldsymbol{\lambda}| = |\lambda^{(1)}| + |\lambda^{(2)}| + \cdots + |\lambda^{(\ell)}|$.

%
%

\section{The $q$-deformed Fock spaces of higher levels}

\subsection{$q$-wedge products and straightening rules}

Let $n$, $\ell$, $s$ be integers such that $n \ge 2$ and $\ell \ge 1$. 
Let $r \in \mathbb{Z}_{\ge 0}$. 
For $\Bf{k} \in \mathbb{Z}^{r}$, a {\it finite $q$-wedge of length $r$} is 
$$
u_{\Bf{k}} = u_{k_{1}} \wedge u_{k_2} \wedge \cdots \wedge u_{k_r} .
$$
Finite $q$-wedges satisfy certain commutation relations, so-called {\it straightening rules. }
Note that the straightening rules depend on $n$ and $\ell$. 
\cite[Proposition 3.16]{U}

\begin{example}
$(i)$ For every $k_{1} \in \mathbb{Z}$, $u_{k_{1}} \wedge u_{k_{1}} = - u_{k_{1}} \wedge u_{k_{1}}$. 
Therefore $u_{k_{1}} \wedge u_{k_{1}} = 0$.

$(ii)$ Let $n=2$, $\ell =2$, $k_{1}=-2$, and $k_{2}=4$. 
Then 
$$ u_{-2} \wedge u_{4} = q \, u_{4} \wedge u_{-2} + (q^{2} - 1) \, u_{2} \wedge u_{0}. $$

$(iii)$ Let $n=2$, $\ell =2$, $k_{1}=-1$, $k_{2}=-2$ and $k_{3}=4$. 
Then, 
\begin{align*}
u_{-1} \wedge u_{-2} \wedge u_{4} = u_{-1} \wedge ( u_{-2} \wedge u_{4} ) 
&= u_{-1} \wedge \Big( q \, u_{4} \wedge u_{-2} + (q^{2} - 1) \, u_{2} \wedge u_{0} \Big) \\
&= q \, u_{-1} \wedge u_{4} \wedge u_{-2} + (q^{2} - 1) \, u_{-1} \wedge u_{2} \wedge u_{0} \\
&= - u_{4} \wedge u_{-1} \wedge u_{-2} - (q-q^{-1}) u_{3} \wedge u_{0} \wedge u_{-2} 
	- (q-q^{-1}) u_{2} \wedge u_{0} \wedge u_{-1} . 
\end{align*}
\label{ex3}
\end{example}

We define $P(s)$ and $P^{++}(s)$ as follows;
\begin{align}
P(s) &= \{ \boldsymbol{k} = ( k_{1} , k_{2} , \cdots ) \in \mathbb{Z}^{\infty}  \,\, | \,\, 
	k_{r} = s-r+1 \,\, \text{ for any sufficiently large } r \,\, \}  \\
P^{++}(s) &= \{ \boldsymbol{k} = ( k_{1} , k_{2} , \cdots ) \in P(s)  \,\, | \,\, k_{1} > k_{2} > \cdots \,\, \}  .
\end{align}
Let $\Lambda^{s}$ be the $\mathbb{Q}(q)$ vector space spanned by the $q$-wedge products 
\begin{equation}
u_{\boldsymbol{k}} = u_{k_{1}} \wedge u_{k_{2}} \wedge \cdots \,\,\,\, , \,\,\,\, (\boldsymbol{k} \in P(s))
\end{equation}
subject to straightening rules. 
We regard a finite $q$-wedge product $u_{k_{1}} \wedge u_{k_2} \wedge \cdots \wedge u_{k_r}$ as 
the infinite $q$-wedge product 
\begin{equation}
u_{k_{1}} \wedge u_{k_2} \wedge \cdots \wedge u_{k_r} \wedge u_{s-r} \wedge u_{s-r-1} \wedge u_{s-r-2} \wedge \cdots  .
\end{equation}

By applying the straightening rules, every $q$-wedge product $u_{\boldsymbol{k}}$ is 
expressed as a linear combination of so-called 
\it{ordered $q$-wedge products}, \rm{namely} $q$-wedge products $u_{\boldsymbol{k}}$ with $\boldsymbol{k} \in P^{++}(s)$.
The ordered $q$-wedge products $\{ u_{\boldsymbol{k}}  \,\, | \,\, \boldsymbol{k} \in P^{++}(s) \}$ 
form a basis of $\Lambda^{s}$ called \it{the standard basis.} \rm{}

\subsection{Abacus}

It is convenient to use the abacus notation for studying various properties in straightening rules. 

Fix an integer $N \ge 2$, and form an infinite abacus with $N$ runners labeled $1,2,\cdots N$ from left to right. 
The positions on the $i$-th runner are labeled by the integers having residue $i$ modulo $N$. 

\begin{equation*}
\begin{array}{ccccc}
\vdots & \vdots & \vdots & \vdots & \vdots \\
-N+1 & -N+2 & \cdots & -1 & 0 \\
1 & 2 & \cdots & N-1 & N \\
N+1 & N+2 & \cdots & 2N-1 & 2N \\
\vdots & \vdots & \vdots & \vdots & \vdots
\end{array}
\end{equation*}

Each $\boldsymbol{k} \in P^{++}(s)$ (or the corresponding $q$-wedge product $u_{\boldsymbol{k}}$) \
can be represented by a bead-configuration on the abacus with $n \ell$ runners 
and beads put on the positions $k_{1},k_{2},\cdots$.
We call this configuration {\it the abacus presentation} of $u_{\boldsymbol{k}}$. 

\begin{example}
If $n=2$, $\ell =3$, $s=0$, and $\boldsymbol{k} = (6,3,2,1,-2,-4,-5,-7,-8,-9,\cdots)$, 
then the abacus presentation of $u_{\boldsymbol{k}}$ is 

$$\begin{array}{cc|cc|cccc}
d=1 & & d=2 & & d=3 & & & \\ 
\vdots & \vdots & \vdots & \vdots & \vdots & \vdots & & \\
\centercircle{-17} & \centercircle{-16} & \centercircle{-15} & \centercircle{-14} & \centercircle{-13} & \centercircle{-12} 
		& \hspace{1em} & \cdots m=3  \\ 
\centercircle{-11} & \centercircle{-10} & \centercircle{-9} & \centercircle{-8} & \centercircle{-7} & -6 & \hspace{1em} & \cdots m=2  \\ 
\centercircle{-5} & \centercircle{-4} & -3 & \centercircle{-2} & -1 & 0 & \hspace{1em} & \cdots m=1  \\ 
\centercircle{1} & \centercircle{2} & \centercircle{3} & 4 & 5 & \centercircle{6} & \hspace{1em} & \cdots m=0  \\
\vdots & \vdots & \vdots & \vdots & \vdots & \vdots & & \\
c=1 & c= 2 & c=1 & c= 2 & c=1 & c= 2 & &
\end{array}$$
\end{example}

We use another labeling of runners and positions. 
Given an integer $k$, let $c,d$ and $m$ be the unique integers satisfying 
\begin{equation}
k=c+n(d-1)-n \ell m \quad , \quad 1 \le c \le n \quad \text{ and } \quad 1 \le d \le \ell. 
\label{k->cdm}
\end{equation}
Then, in the abacus presentation, the position $k$ is on the $c + n(d-1)$-th runner (see the previous example). 
Relabeling the position $k$ by $c - n m$, we have $\ell$ abaci with $n$ runners.

\begin{example}
In the previous example, relabeling the position $k$ by $c - n m$, we have 

$$\begin{array}{cc|cc|cccc}
d=1 & & d=2 & & d=3 & & & \\ 
\vdots & \vdots & \vdots & \vdots & \vdots & \vdots & & \\
\centercircle{-5} & \centercircle{-4} & \centercircle{-5} & \centercircle{-4} & \centercircle{-5} & \centercircle{-4} 
		& \hspace{1em} & \cdots m=3  \\ 
\centercircle{-3} & \centercircle{-2} & \centercircle{-3} & \centercircle{-2} & \centercircle{-3} & -2 & \hspace{1em} & \cdots m=2  \\ 
\centercircle{-1} & \centercircle{0} & -1 & \centercircle{0} & -1 & 0 & \hspace{1em} & \cdots m=1  \\ 
\centercircle{1} & \centercircle{2} & \centercircle{1} & 2 & 1 & \centercircle{2} & \hspace{1em} & \cdots m=0  \\
\vdots & \vdots & \vdots & \vdots & \vdots & \vdots & & \\
c=1 & c= 2 & c=1 & c= 2 & c=1 & c= 2 & &
\end{array}$$

\label{exam.2.5}
\end{example}

We assign to each of $\ell$ abacus presentations with $n$ runners a $q$-wedge product of level $1$. 
In fact, straightening rules in each ``sector'' are the same as those of level $1$ 
	by identifying the abacus in the sector with that of level $1$. 
(see also \cite{U} and \cite{I})

We introduce some notation.

\begin{definition}
For an integer $k$, let $c,d$ and $m$ be the unique integers satisfying (\ref{k->cdm}), and write 

\begin{equation}
u_{k} = u_{c - n m}^{(d)}.
\end{equation}

Also we write $u_{c_{1} - n m_{1}}^{(d_{1})} > u_{c_{2} - n m_{2}}^{(d_{2})}$ 
if $k_{1} > k_{2}$, where $k_{i} = c_{i} + n (d_{i} - 1) - n \ell m_{i}$, $(i=1,2)$. 

A finite $q$-wedge 
$$ v=u_{k_1}^{(d_1)} \wedge u_{k_2}^{(d_2)} \wedge \cdots \wedge u_{k_r}^{(d_r)} $$ 
is {\it simple} if $d_1=d_2= \cdots = d_r$.
\label{notation}
\end{definition}

We regard $u_{c - n m}^{(d)}$ as $u_{c - n m}$ in the case of $\ell = 1$. 

\begin{example}
If $n=2$, $\ell = 3$, then we have 
$$ u_{-10} \wedge u_{1} = -q^{-1} \, u_{1} \wedge u_{-10} +(q^{-2} -1) \, u_{-4} \wedge u_{-5}, $$
that is, 
$$ u_{-2}^{(1)} \wedge u_{1}^{(1)} = -q^{-1} \, u_{1}^{(1)} \wedge u_{-2}^{(1)} +(q^{-2} -1) \, u_{0}^{(1)} \wedge u_{-1}^{(1)}. $$

On the other hand, in the case of $n=2, \ell =1$, 
$$ u_{-2} \wedge u_{1} = -q^{-1} \, u_{1} \wedge u_{-2} +(q^{-2} -1) \, u_{0} \wedge u_{-1}.$$
\end{example}

\subsection{$\ell$-tuples of Young diagrams}

Another indexation of the ordered $q$-wedge products is given by the set of pairs 
$( \boldsymbol{\lambda} , \boldsymbol{s} )$ 
of $\ell$-tuples of Young diagrams $\boldsymbol{\lambda} = ( \lambda^{(1)} , \cdots ,\lambda^{(\ell)})$
and integer sequence $\boldsymbol{s} = (s_{1} , \cdots , s_{\ell})$ summing up to $s$.
Let $\boldsymbol{k} = (k_{1} , k_{2} , \cdots ) \in P^{++}(s)$, and write 
$$k_{r} = c_{r} + n(d_{r} - 1) - n \ell m_{r} \quad , 
	\quad 1 \le c_{r} \le n \quad , \quad 1 \le d_{r} \le \ell \quad , \quad m_{r} \in \mathbb{Z} \quad . $$
For $d \in \{ 1 ,2 , \cdots , \ell \}$, let $k_{1}^{(d)}, k_{2}^{(d)}, \cdots$ be integers such that 

$$\beta^{(d)} = \{ c_{r} - n m_{r} \,\, | \,\, d_{r} = d \} = \{ k_{1}^{(d)}, k_{2}^{(d)}, \cdots \} \quad \text{ and } \quad 
	k_{1}^{(d)} > k_{2}^{(d)} > \cdots $$ 
Then we associate to the sequence $(k_{1}^{(d)} , k_{2}^{(d)} , \cdots )$ an integer $s_{d}$ 
	and a partition $\lambda^{(d)}$ by 
$$ k_{r}^{(d)} = s_{d} -r+1 \quad \text{ for sufficiently large } r 
	\quad \text{ and } \quad  
	\lambda^{(d)}_{r} = k^{(d)}_{r} - s_{d} + r -1 \quad  \text { for } r \ge 1. $$
In this correspondence, we also write 

\begin{equation}
u_{\boldsymbol{k}} = | \boldsymbol{\lambda} ; \boldsymbol{s} \rangle \quad (\boldsymbol{k} \in P^{++}(s)).
\end{equation}

\begin{example}
If $n=2$, $\ell =3$, $s=0$, and $\boldsymbol{k} = (6,3,2,1,-2,-4,-5,-7,-8,-9,\cdots)$, then

\begin{align*}
k_{1} &= 6 = 2 + 2(3-1) -6 \cdot 0 \,\,\,\,,\,\,\,\,
k_{2} = 3 = 1 + 2(2-1) -6 \cdot 0 \,\,\,\,, \\
k_{3} &= 2 = 2 + 2(1-1) -6 \cdot 0 \,\,\,\,\,\, ,  \cdots \text{ and so on.} 
\end{align*}
Hence, 
\begin{align*}
\beta^{(1)} = \{ 2,1,0,-1,-2, \cdots \} \,\,\,\,\, , \,\,\,\,\,
\beta^{(2)} = \{ 1,0,-2,-3,-4, \cdots \}  \,\,\,\,\,\, , \,\,\,\,\,
\beta^{(3)} = \{ 2,-3,-4,-5, \cdots \}  \,\,\,\, .
\end{align*}
Thus, $\boldsymbol{s} = (2,0,-2)$ and $\boldsymbol{\lambda} = (\emptyset , (1,1) , (4))$.

Note that we can read off $\boldsymbol{s} = (2,0,-2)$ and $\boldsymbol{\lambda} = (\emptyset , (1,1) , (4))$ 
from the abacus presentation. 
$($see Example \ref{exam.2.5}$)$
\end{example}

\subsection{The $q$-deformed Fock spaces of higher levels}

\begin{definition}
For $\boldsymbol{s} \in \mathbb{Z}^{\ell}$, we define the $q$-deformed Fock space 
$\boldsymbol{F}_{q}[\boldsymbol{s}]$ of level $\ell$ to be the subspace of $\Lambda^{s}$ spanned by 
$| \boldsymbol{\lambda} ; \boldsymbol{s} \rangle$ $(\boldsymbol{\lambda} \in \Pi^{\ell}) \colon $  

\begin{equation}
\boldsymbol{F}_{q}[\boldsymbol{s}] = 
\bigoplus_{\boldsymbol{\lambda} \in \Pi^{\ell}} \mathbb{Q}(q) \, | \boldsymbol{\lambda} ; \boldsymbol{s} \rangle.
\end{equation}
We call $\boldsymbol{s}$ a {\it multi charge}. 
\end{definition}

\subsection{The action of bosons}

The Fock space $\Bf{F}_{q}[\Bf{s}]$ is endowed with the action of bosons $B_{m}$ given by 
\begin{equation}
	B_{m} (u_{\Bf{k}}) 
	= \sum_{r \ge 1} u_{k_{1}} \wedge u_{k_{2}} \wedge \cdots \wedge u_{k_{r-1}} \wedge u_{k_{r}-n \ell m} 
	\wedge u_{k_{r+1}} \wedge \cdots \quad , \quad ( m \in \mathbb{Z}^{*} ), 
\end{equation}
where $\mathbb{Z}^{*}$ denotes the set of nonzero integers. 

{ \bf Remark}. $B_{m}$'s generate a Heisenberg algebra \cite[Proposition 4.5]{U}: 
\begin{equation*}
	[B_{m} , B_{m'}] = \delta_{m,-m'} \, m \, \frac{1 - q^{-2mn}}{1-q^{-2m}} \cdot \frac{1-q^{2m \ell}}{1-q^{2m}} , 
	\quad (m \in \mathbb{Z}_{>0} \text{ and }  m' \in \mathbb{Z}^{*}).
\end{equation*}

\subsection{The bar involution}

\begin{definition}
The bar involution $\overline{\phantom{xy}}$ of $\Lambda^{s}$ is the $\mathbb{Q}$-vector space automorphism such that 
$\overline{q} = q^{-1}$ and 

\begin{align}
\overline{ u_{\boldsymbol{k}} } 
&= \overline{ u_{k_{1}} \wedge \cdots \wedge u_{k_{r}} } \wedge u_{k_{r+1}} \wedge \cdots 
=(-q)^{\kappa (d_{1}, \cdots ,d_{r})} q^{-\kappa (c_{1},\cdots,c_{r})}  
(u_{k_{r}} \wedge \cdots \wedge u_{k_{1}} )\wedge u_{k_{r+1}} \wedge \cdots ,
\label{barinvolution}
\end{align}
where $c_{i}$, $d_{i}$ are defined by $k_{i}$ as in (\ref{k->cdm}), $r$ is an integer satisfying $k_{r} = s-r+1$. 
And $\kappa (a_{1}, \cdots ,a_{r})$ is defined by  
$$ \kappa (a_{1}, \cdots ,a_{r}) = \# \{ (i,j) \, | \, i<j \,,\, a_{i}=a_{j} \} .$$
\label{barinv}
\end{definition}

{\bf Remarks}. 
\begin{enumerate}
	\item The involution is well defined. i.e. it doesn't depend on the choice of $r$ \cite{U}. 
	\item The involution comes from the bar involution of affine Hecke algebra $\hat{H_{r}}$. 
		(see \cite{U} for more detail.)
	\item The involution preserves the $q$-deformed Fock space $\boldsymbol{F}_{q}[\boldsymbol{s}]$ of multi charge $\Bf{s}$. 
\end{enumerate}

The following proposition shows that the action of $B_{m}$ commutes with the bar involution. 

\begin{proposition}[\cite{U}]
For $| \Bf{\lambda} ; \Bf{s} \rangle \in \Bf{F}_{q}[\Bf{s}] $ and $m \in \mathbb{Z}_{>0}$, we have 
\begin{equation}
	\overline{ B_{-m} | \Bf{\lambda} ; \Bf{s} \rangle } = B_{-m} \overline{ | \Bf{\lambda} ; \Bf{s} \rangle }. 
\end{equation}
\label{barcommute}
\end{proposition}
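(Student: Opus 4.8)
## Proof Proposal for Proposition \ref{barcommute}

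The plan is to prove the commutation $\overline{B_{-m}\,u_{\Bf{k}}} = B_{-m}\,\overline{u_{\Bf{k}}}$ directly on a basis $q$-wedge $u_{\Bf{k}}$ with $\Bf{k}\in P^{++}(s)$, working with a sufficiently long finite truncation so that both operators only disturb the first $r$ factors. First I would fix $r$ large enough that $k_j = s-j+1$ for all $j>r$ and, crucially, large enough that for every $j\le r$ the shifted index $k_j - n\ell m$ (recall $m>0$, so this is $k_j + n\ell m$ for $B_{-m}$) still lies among the ``frozen tail'' region or is otherwise controlled; more precisely I want $r$ chosen so that applying $B_{-m}$ and then re-expanding in the standard basis, and applying the bar involution, both factor through the finite $q$-wedge $u_{k_1}\wedge\cdots\wedge u_{k_r}$. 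This reduces the identity to a statement about the finite $q$-wedge space $\Lambda^{s}_{[r]}$ of length-$r$ wedges, where $B_{-m}$ acts as $\sum_{j=1}^{r} (\text{shift of the }j\text{-th factor by }+n\ell m)$ and the bar involution is the explicit sign-times-power-of-$q$ reversal in \eqref{barinvolution}.

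Next I would exploit Uglov's identification of the finite $q$-wedge space with a tensor power $V^{\otimes r}$ of the level-$\ell$, rank-$n$ evaluation-type module, under which the straightening rules become the action of the $R$-matrix / the Hecke algebra $\hat H_r$, and under which the bar involution \eqref{barinvolution} is exactly the bar involution of $\hat H_r$ (this is Remark 2 after Definition \ref{barinv}). The operator $B_{-m}$ is, up to normalization, the $r$-th power sum in the "shift" variables, i.e. it is built from the central element $\sum_j X_j^{n\ell m}$ where $X_j$ is the operator $u_{k}\mapsto u_{k - n\ell m\,\cdot\,(\text{unit})}$ acting in slot $j$; the key algebraic fact is that this symmetric combination of shifts commutes with the $\hat H_r$-action (Uglov, \cite[Prop. 4.5 context]{U}) and that each individual shift $X_j$ intertwines with the bar involution up to the same combinatorial factor that appears in \eqref{barinvolution}. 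Concretely, $\overline{X_j\,v} = X_j\,\overline{v}$ for the single-slot shift because shifting $k_j\mapsto k_j - n\ell m$ changes neither the residue $c_j$ (mod $n$) nor the runner index $d_j$ (mod $\ell$) — this is the arithmetic heart of the matter, visible already from \eqref{k->cdm}: subtracting $n\ell m$ leaves both $c$ and $d$ unchanged. Hence the statistics $\kappa(c_1,\dots,c_r)$ and $\kappa(d_1,\dots,d_r)$ governing the bar involution are unaffected by the shift, and summing over $j$ gives the result on the finite level, which then passes to the limit $r\to\infty$ by the well-definedness of the involution (Remark 1 after Definition \ref{barinv}).

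An alternative, more self-contained route avoiding heavy appeal to the Hecke-algebra picture: expand $u_{\Bf{k}}$ in the standard basis is unnecessary since $\Bf{k}\in P^{++}(s)$ already, but after applying $B_{-m}$ one obtains a sum of generally non-ordered wedges, each of which must be straightened. I would then check that the bar involution, applied before straightening versus after, agrees — i.e. that the straightening rules are compatible with the bar involution, which is essentially the statement that $\hat H_r$'s generators $T_i$ satisfy $\overline{T_i} = T_i^{-1}$ and the wedge relations are built from $T_i + (\text{const})$. Granting that compatibility (which is in \cite{U}), it suffices to verify the identity termwise on each summand $u_{k_1}\wedge\cdots\wedge u_{k_j - n\ell m}\wedge\cdots\wedge u_{k_r}$ of $B_{-m}u_{\Bf{k}}$, and this is the one-line arithmetic check above: the reversal factor $(-q)^{\kappa(d_\bullet)}q^{-\kappa(c_\bullet)}$ is literally unchanged when $k_j$ is replaced by $k_j - n\ell m$.

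The main obstacle I anticipate is bookkeeping at the boundary of the truncation: $B_{-m}$ is defined as an \emph{infinite} sum $\sum_{r\ge 1}$, and although all but finitely many terms reproduce the frozen tail (and hence contribute $0$ or a controlled shift), one must argue carefully that the bar involution — itself defined via a choice of truncation length — commutes with this infinite sum, i.e. that one may interchange "take $\overline{\phantom{x}}$" with "sum over $r$". Making that interchange rigorous requires showing that for $r$ sufficiently large the two operators $B_{-m}$ and $\overline{\phantom{x}}$ "decouple" on the tail in the same way, so that the relevant computation genuinely takes place in a fixed finite-dimensional (or at least finite-length) truncation. Once that stabilization is set up cleanly, the remaining content is the elementary invariance of $c$ and $d$ under the shift by $n\ell m$, which makes the whole identity fall out immediately.
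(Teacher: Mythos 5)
The paper does not prove this proposition at all: it is quoted verbatim from Uglov \cite{U} (it is one of the imported ingredients, like the well-definedness of the bar involution and of the boson action), so there is no in-paper argument to compare yours against line by line. Judged on its own terms, your reconstruction identifies the right mechanism: for $m>0$ the operator $B_{-m}$ replaces some $k_j$ by $k_j+n\ell m$, and since $k=c+n(d-1)-n\ell m'$ this leaves the pair $(c_j,d_j)$ untouched, so the prefactor $(-q)^{\kappa(d_1,\dots,d_r)}q^{-\kappa(c_1,\dots,c_r)}$ in (\ref{barinvolution}) is the same for $u_{\Bf{k}}$ and for each summand of $B_{-m}u_{\Bf{k}}$; choosing the truncation point $r$ beyond both the support of $\Bf{\lambda}$ and the range $n\ell m$ of the shift (so that shifts of deeper tail slots create repeated indices and vanish on both sides) then gives the termwise matching, with the reversal exchanging slot $j$ and slot $r+1-j$. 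That is a legitimate proof, but note that it silently rests on exactly the two nontrivial facts you delegate to \cite{U}: that the reversal formula (\ref{barinvolution}) computes the bar involution on \emph{non-ordered} wedges (equivalently, that the involution is compatible with the straightening rules, which is where the identification with the bar involution of $\hat H_r$ and $\overline{T_i}=T_i^{-1}$ really enters), and that the slot-wise formula for $B_{m}$ may likewise be applied to a non-ordered representative such as the reversed wedge (i.e. well-definedness of the boson action on the quotient). Since the paper itself imports the whole statement from \cite{U}, leaning on those compatibilities is fair, but they are the actual content; the $(c,d)$-invariance alone is the easy part.

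One phrase in your second paragraph should be repaired or dropped: the claim that ``each individual shift $X_j$ intertwines with the bar involution'' is not meaningful as an operator statement on $\Lambda^{s}$, because a single-slot shift does not descend to the wedge quotient (only symmetric combinations of the $X_j$, i.e. central elements of $\hat H_r$, act there -- that is precisely why $B_m$ is well defined). Your third-paragraph formulation, which verifies the identity summand by summand on explicit wedge expressions and invokes compatibility of straightening with the bar involution, avoids this and is the version to keep.
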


\subsection{The dominance order}

We define a partial ordering 
$ | \boldsymbol{\lambda} ; \boldsymbol{s} \rangle \ge | \boldsymbol{\mu} ; \boldsymbol{s} \rangle $. 
%

\begin{definition}
Let $ | \boldsymbol{\lambda} ; \boldsymbol{s} \rangle  = u_{k_{1}} \wedge u_{k_{2}} \wedge \cdots \,\,$ and \,\,
$ | \boldsymbol{\mu} ; \boldsymbol{s} \rangle = u_{g_{1}} \wedge u_{g_{2}} \wedge \cdots  $. 
We define $ | \boldsymbol{\lambda} ; \boldsymbol{s} \rangle \ge | \boldsymbol{\mu} ; \boldsymbol{s} \rangle $ 
if $| \boldsymbol{\lambda} | = | \boldsymbol{\mu} |$ and 

\begin{equation}
\sum_{j=1}^{r} k_{j} \ge \sum_{j=1}^{r} g_{j} \,\,\,\,\,\, (\text{for all } \,\,\, r=1,2,3,\cdots ) \quad .
\end{equation}
\label{order}
\end{definition}


\begin{example}
Let $n= \ell =2$, $\boldsymbol{s} = (1,-1)$, $\boldsymbol{\lambda} = ((1,1) , \emptyset)$, and
$\boldsymbol{\mu} = (\emptyset , (2))$. 
Then, $| \boldsymbol{\lambda} ; \boldsymbol{s} \rangle = u_{2} \wedge u_{1} \wedge u_{-1} \wedge u_{-3} \wedge \cdots$ and 
$| \boldsymbol{\mu} ; \boldsymbol{s} \rangle = u_{3} \wedge u_{1} \wedge u_{-2} \wedge u_{-3} \wedge \cdots$.
Thus, $| \boldsymbol{\mu} ; \boldsymbol{s} \rangle$ is greater than 
	$| \boldsymbol{\lambda} ; \boldsymbol{s} \rangle$. 
\label{example1}
\end{example}

We define a matrix $(a_{\boldsymbol{\lambda},\boldsymbol{\mu}}(q))_{\boldsymbol{\lambda},\boldsymbol{\mu}}$ by 

\begin{equation}
\overline{ |\boldsymbol{\lambda} ; \boldsymbol{s} \rangle } = 
\sum_{\boldsymbol{\mu}} a_{\boldsymbol{\lambda},\boldsymbol{\mu}}(q) \,
|\boldsymbol{\mu} ; \boldsymbol{s} \rangle .
\end{equation}
Then the matrix $(a_{\boldsymbol{\lambda},\boldsymbol{\mu}}(q))_{\boldsymbol{\lambda},\boldsymbol{\mu}}$
is unitriangular with respect to $\ge$, that is 
\begin{equation}
\begin{cases}
\mathrm{(a)} & 
\text{ if } \,\, a_{\boldsymbol{\lambda},\boldsymbol{\mu}}(q) \not= 0 \,\, \text{, then } \,\,
|\boldsymbol{\lambda} ; \boldsymbol{s} \rangle \ge |\boldsymbol{\mu} ; \boldsymbol{s} \rangle , \\
\mathrm{(b)} & 
a_{\boldsymbol{\lambda},\boldsymbol{\lambda}}(q) = 1 .
\label{unitriangularity}
\end{cases}
\end{equation}
(see the identity (\ref{eq.22}) for the detail.)

Thus, by the standard argument, the unitriangularity implies the following theorem.

\begin{theorem}\cite[Theorem 3.25]{U}
There exist unique bases 
$\{ G^{+}(\boldsymbol{\lambda} ; \boldsymbol{s}) \, | \, \boldsymbol{\lambda} \in \Pi^{\ell}  \}$ and 
$\{ G^{-}(\boldsymbol{\lambda} ; \boldsymbol{s}) \, | \, \boldsymbol{\lambda} \in \Pi^{\ell}  \}$ 
of $\boldsymbol{F}_{q}[\boldsymbol{s}]$ such that 

\begin{align*}
\mathrm{(i) } \hspace{5em}
\overline{G^{+}(\boldsymbol{\lambda} ; \boldsymbol{s})} = G^{+}(\boldsymbol{\lambda} ; \boldsymbol{s}) 
\hspace{2em} , \hspace{3em} & 
\overline{G^{-}(\boldsymbol{\lambda} ; \boldsymbol{s})} = G^{-}(\boldsymbol{\lambda} ; \boldsymbol{s}) \\
\mathrm{(ii) } \hspace{1em}
G^{+}(\boldsymbol{\lambda} ; \boldsymbol{s}) \equiv | \, \boldsymbol{\lambda} ; \boldsymbol{s} \rangle
\,\,\,\, \mathrm{mod} \,\, q \, \mathcal{L}^{+} 
\hspace{2em} , \hspace{3em} & 
G^{-}(\boldsymbol{\lambda} ; \boldsymbol{s}) \equiv | \, \boldsymbol{\lambda} ; \boldsymbol{s} \rangle
\,\,\,\, \mathrm{mod} \,\, q^{-1} \, \mathcal{L}^{-}   \\
\text{where} \hspace{5em}
\mathcal{L}^{+} = \bigoplus_{\boldsymbol{\lambda} \in \Pi^{\ell}} 
\mathbb{Q}[q] \, | \boldsymbol{\lambda} ; \boldsymbol{s} \rangle
\hspace{2em} , \hspace{3em} & 
\mathcal{L}^{-} = \bigoplus_{\boldsymbol{\lambda} \in \Pi^{\ell}} 
\mathbb{Q}[q^{-1}] \, | \boldsymbol{\lambda} ; \boldsymbol{s} \rangle .
\end{align*}
\end{theorem}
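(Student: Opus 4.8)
The plan is to deduce the existence and uniqueness of the bases $G^{\pm}(\boldsymbol{\lambda};\boldsymbol{s})$ from the unitriangularity property \eqref{unitriangularity} by the standard Kazhdan--Lusztig-type argument, together with the fact that the bar involution is an involution on the lattice $\mathbb{Z}[q,q^{-1}]^{\Pi^{\ell}}$. First I would fix the partial order $\ge$ of Definition \ref{order} on the finite set of $\boldsymbol{\mu}$ with $|\boldsymbol{\mu}| = |\boldsymbol{\lambda}|$ and a given $\boldsymbol{s}$; this is the only set that matters, since both $\overline{\phantom{a}}$ and the transition matrix $(a_{\boldsymbol{\lambda},\boldsymbol{\mu}}(q))$ preserve $|\boldsymbol{\lambda}|$, so $\boldsymbol{F}_q[\boldsymbol{s}]$ decomposes as a direct sum over $|\boldsymbol{\lambda}|$ of finite-dimensional bar-stable subspaces. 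Refine $\ge$ to a total order $\preceq$ and proceed by induction on this total order.

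The inductive construction goes as follows. For the minimal $\boldsymbol{\lambda}$ we have $\overline{|\boldsymbol{\lambda};\boldsymbol{s}\rangle} = |\boldsymbol{\lambda};\boldsymbol{s}\rangle$ by \eqref{unitriangularity}(a)--(b), so we set $G^{+}(\boldsymbol{\lambda};\boldsymbol{s}) = |\boldsymbol{\lambda};\boldsymbol{s}\rangle$. For general $\boldsymbol{\lambda}$, assume $G^{+}(\boldsymbol{\mu};\boldsymbol{s})$ has been constructed for all $\boldsymbol{\mu} \prec \boldsymbol{\lambda}$ satisfying (i) and (ii). Write
\begin{equation*}
\overline{|\boldsymbol{\lambda};\boldsymbol{s}\rangle} - |\boldsymbol{\lambda};\boldsymbol{s}\rangle
= \sum_{\boldsymbol{\mu} \prec \boldsymbol{\lambda}} a_{\boldsymbol{\lambda},\boldsymbol{\mu}}(q)\,|\boldsymbol{\mu};\boldsymbol{s}\rangle,
\end{equation*}
which is bar-invariant; one then peels off the terms by descending induction on $\boldsymbol{\mu}$, using the elementary fact that a bar-invariant element of $\mathbb{Z}[q,q^{-1}]$ whose contribution must be absorbed by a $G^{+}(\boldsymbol{\mu};\boldsymbol{s}) \equiv |\boldsymbol{\mu};\boldsymbol{s}\rangle$ forces the correcting coefficient to lie in $q\mathbb{Z}[q]$ and to be uniquely determined. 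Concretely, there is a unique family $(\gamma_{\boldsymbol{\mu}}(q))_{\boldsymbol{\mu}\prec\boldsymbol{\lambda}}$ with $\gamma_{\boldsymbol{\mu}}(q)\in q\mathbb{Z}[q]$ such that
\begin{equation*}
G^{+}(\boldsymbol{\lambda};\boldsymbol{s}) := |\boldsymbol{\lambda};\boldsymbol{s}\rangle + \sum_{\boldsymbol{\mu}\prec\boldsymbol{\lambda}} \gamma_{\boldsymbol{\mu}}(q)\, G^{+}(\boldsymbol{\mu};\boldsymbol{s})
\end{equation*}
is bar-invariant; bar-invariance is checked by applying $\overline{\phantom{a}}$, using the induction hypothesis $\overline{G^{+}(\boldsymbol{\mu};\boldsymbol{s})} = G^{+}(\boldsymbol{\mu};\boldsymbol{s})$, and comparing coefficients. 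The argument for $G^{-}$ is identical with $q\mathbb{Z}[q]$ replaced by $q^{-1}\mathbb{Z}[q^{-1}]$. Uniqueness follows because any two solutions differ by a bar-invariant element of $\bigoplus q\mathbb{Z}[q]\,|\boldsymbol{\mu};\boldsymbol{s}\rangle$ (resp. $q^{-1}\mathbb{Z}[q^{-1}]$), and the only such element is $0$ — this is the standard rigidity lemma for crystal/canonical lattices.

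The main point that needs care, rather than a genuine obstacle, is verifying that everything takes place over $\mathbb{Z}[q,q^{-1}]$ (so that "coefficient in $q\mathbb{Z}[q]$" makes sense and the recursion closes), i.e. that the entries $a_{\boldsymbol{\lambda},\boldsymbol{\mu}}(q)$ are genuine Laurent polynomials with integer — or at least $\mathbb{Z}$-bounded-denominator — coefficients, and that $\mathcal{L}^{\pm}$ are stable under the relevant operations. This is exactly what the parenthetical pointer to the identity \eqref{eq.22} and the finiteness of the straightening rules (Proposition 3.16 of \cite{U}) supply: each $\overline{|\boldsymbol{\lambda};\boldsymbol{s}\rangle}$ is a finite $\mathbb{Z}[q,q^{-1}]$-combination of standard basis vectors, so the transition matrix is unitriangular over $\mathbb{Z}[q,q^{-1}]$. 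Granting that, the existence-and-uniqueness theorem is formal. Since this is quoted verbatim as \cite[Theorem 3.25]{U}, I would present only the skeleton of this argument and refer to Uglov for the Laurent-integrality bookkeeping.
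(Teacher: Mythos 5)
Your argument is the standard Lusztig-type triangularity induction on each finite-dimensional subspace of fixed $|\boldsymbol{\lambda}|$, which is exactly what the paper relies on: it gives no proof of its own, simply invoking ``the standard argument'' from the unitriangularity (\ref{unitriangularity}) and citing Uglov's Theorem 3.25. One minor remark: since $\mathcal{L}^{\pm}$ are $\mathbb{Q}[q]$- resp.\ $\mathbb{Q}[q^{-1}]$-lattices, the $\mathbb{Z}[q,q^{-1}]$-integrality bookkeeping you single out as the delicate point is not actually needed here --- the rigidity step (the only bar-invariant element of $q\,\mathbb{Q}[q]$ is $0$) and the whole recursion work verbatim over $\mathbb{Q}$.
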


\begin{definition}
Define matrices 
$\Delta^{+}(q) = (\Delta^{+}_{\boldsymbol{\lambda},\boldsymbol{\mu}}(q))
_{\boldsymbol{\lambda},\boldsymbol{\mu}}$ and 
$\Delta^{-}(q) = (\Delta^{-}_{\boldsymbol{\lambda},\boldsymbol{\mu}}(q))
_{\boldsymbol{\lambda},\boldsymbol{\mu}}$ by 

\begin{align}
G^{+}(\boldsymbol{\lambda} ; \boldsymbol{s}) = \sum_{\boldsymbol{\mu}} 
\Delta^{+}_{\boldsymbol{\lambda},\boldsymbol{\mu}}(q) \, | \, \boldsymbol{\mu} ; \boldsymbol{s} \rangle 
\hspace{2em} , \hspace{3em} 
G^{-}(\boldsymbol{\lambda} ; \boldsymbol{s}) = \sum_{\boldsymbol{\mu}} 
\Delta^{-}_{\boldsymbol{\lambda},\boldsymbol{\mu}}(q) \, | \, \boldsymbol{\mu} ; \boldsymbol{s} \rangle .
\end{align}
\end{definition}

The entries $\Delta^{\pm}_{\boldsymbol{\lambda},\boldsymbol{\mu}}(q)$ are called {\it $q$-decomposition numbers}. 
Note that $q$-decomposition numbers $\Delta^{\pm}(q)$ depend on $n$, $\ell$ and $\boldsymbol{s}$.
The matrices $\Delta^{+}(q)$ and $\Delta^{-}(q)$ are also unitriangular with respect to $\ge$.

It is known \cite[Theorem 3.26]{U} that the entries of $\Delta^{-}(q)$ are 
	Kazhdan-Lusztig polynomials of parabolic submodules of affine Hecke algebras of type $A$, 
	and that they are polynomials in $q$ with non-negative integer coefficients.

%
%

\section{A $q$-analogue of the tensor product theorem of level one}

In this section we review the $q$-analogue of tensor product theorem in the case of $\ell = 1$ \cite{LT}. 

\subsection{$V_{\lambda}$ and $S_{\lambda}$}

Let $p_{m}$, $h_{m}$ and $s_{\lambda}$ be the power sum symmetric function of degree $m$, 
the complete symmetric function of degree $m$ and Schur function, respectively. 
There are some well-known relationship among them. 

\begin{equation}
	h_{m} = \sum_{|\lambda |= m} \frac{1}{z_{\lambda}} p_{\lambda} \quad , \quad 
	s_{\lambda} = \sum_{\mu} K_{\mu , \lambda}^{(-1)} h_{\mu} \quad , 
\end{equation}
where $K_{\mu , \lambda}^{(-1)}$ is the inverse Kostka number 
	and for a partition $\mu = (1^{\alpha_{1}} , 2^{\alpha_{2}} , \cdots  )$, 
	we define $z_{\mu} = \Pi_{i \ge 1} i^{\alpha_{i}} \alpha_{i}!$. 
For a partition $\lambda = (\lambda_{1} , \lambda_{2} , \cdots )$, we define $B_{-\lambda}$ by 
\begin{equation}
	B_{-\lambda} = B_{-\lambda_{1}} B_{-\lambda_{2}} \cdots .
\end{equation}

\begin{definition}
For $m \in \mathbb{Z}_{>0}$ and $\lambda \in \Pi$, 
we define operators $V_{m}$ and $S_{\lambda}$ on $\Bf{F}_{q}[\Bf{s}]$ as 
\begin{equation}
	V_{m} = \sum_{|\lambda |= m} \frac{1}{z_{\lambda}} B_{-\lambda} \quad , \quad 
	S_{\lambda} = \sum_{\mu} K_{\mu , \lambda}^{(-1)} V_{\mu} \quad , 
\end{equation}
where $V_{\mu} = V_{\mu_{1}} V_{\mu_{2}} \cdots $ for $\mu = (\mu_{1} , \mu_{2} , \cdots ) \in \Pi$.
\label{p-h-s}
\end{definition}

That is, we regard $B_{-m}$ (resp. $V_{m}$, $S_{\lambda}$) 
	as the power sum (resp. the complete symmetric function, Schur function). 
By Proposition \ref{barcommute}, $V_{m}$ and $S_{\lambda}$ also commute the bar involution. 

The action of $V_m$ is combinatorially described as follows. 
An {\it $n$-ribbon} is a connected strip of $n$-cells which does not contain a $2 \times 2$ square; 
more precisely, an $n$-ribbon is a sequence of $n$ cells $R=\{ (a_1,b_1) , (a_2,b_2) , \cdots , (a_n,b_n) \}$ such that 
	$(a_{i+1} , b_{i+1})$ is either $(a_{i}+1 , b_{i})$ or $(a_{i} , b_{i}-1)$, for $i=1,2,\cdots,n$. 
The {\it head} of $R$ is the cell $\mathrm{head}(R) = (a_1,b_1)$ and 
	$\mathrm{spin}_{n}(R) = \# \{ 1 \le i<n \mid a_{i+1}=a_{i}+1 \}$ is the {\it $n$-spin} of $R$.

For partitions $\lambda$ and $\mu$, we write $\lambda \overset{m \colon n}{\rightsquigarrow} \mu$ 
	if $\lambda \subset \mu$ and the skew diagram $\mu \backslash \lambda$ is a disjoint union of $m$ $n$-ribbons such that 
	the head of each ribbon is either in the first row of $\mu$ or is of the form $(i,j)$ where $(i-1,j) \in \lambda$.
Lascoux, Leclerc and Thibon call $\mu / \lambda$ an {\it $n$-ribbon tableau} of weight $(m)$ and 
	they note that there is a unique way of writing $\mu \backslash \lambda$ as a disjoint union of ribbons. 
Finally, if $\lambda \overset{m \colon n}{\rightsquigarrow} \mu$ then $\mathrm{spin}_{n}(\mu / \lambda)$, 
	the $n$-spin of $\mu / \lambda$, is the sum of the $n$-spins of the ribbons in $\mu \backslash \lambda$. 

\begin{theorem}[\cite{LT} Theorem.6.7]
Let $m \in \mathbb{Z}_{\ge 0}$ and $\lambda \in \Pi$.  
If $\ell =1$, then 
$$
V_{m} \, | \lambda \rangle 
	= \sum_{ \lambda \overset{m \colon n}{\rightsquigarrow} \mu } (-q^{-1})^{\mathrm{spin}_{n}(\mu / \lambda)} \, | \mu \rangle .
$$
\label{LT.th6.7}
\end{theorem}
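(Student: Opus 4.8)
The plan is to reduce the statement to a purely level-one computation in the wedge space $\Lambda^{s}$ with $\ell=1$, where $B_{-m}$ acts on a single semi-infinite wedge $u_{\boldsymbol{k}}$ by summing the $n$-shifts $u_{k_r}\mapsto u_{k_r-nm}$. First I would translate the partition $\lambda$ into its beta-numbers $k_r = \lambda_r - r + s + 1$ and recall that each summand of $B_{-m}|\lambda\rangle$ is the (unordered) wedge obtained by replacing one $k_r$ by $k_r - nm$; on the abacus with $n$ runners this is exactly the move of a single bead down $m$ rows on its own runner. Applying the straightening rules reorders this wedge back to $P^{++}(s)$, producing a sign that is a power of $-q^{-1}$, and the resulting ordered wedge is $\pm q^{\bullet}|\mu\rangle$ where $\mu$ is obtained from $\lambda$ by adding an $n$-ribbon whose head condition is precisely the requirement that no bead is overtaken improperly. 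This is the classical James–Kerber abacus description of ribbon addition, now carried out inside the $q$-wedge where each elementary transposition in the straightening contributes a factor. I would then iterate to get $V_m = \sum_{|\nu|=m} z_\nu^{-1} B_{-\nu}$, collect contributions, and identify the total $q$-power with $\mathrm{spin}_n$.

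The key steps, in order, are: (1) Fix the abacus presentation of $|\lambda\rangle$ with $n$ runners and $s$ beads in standard positions; record that adding a single $n$-ribbon of $n$-spin $h$ corresponds to moving one bead down one row past exactly $h$ beads (equivalently, the ribbon occupies $h+1$ consecutive runners with its head in row of the vacated slot). (2) Carry out the straightening of $u_{k_1}\wedge\cdots\wedge u_{k_r - nm}\wedge\cdots$ step by step: each time the shifted letter $u_{k_r-nm}$ must hop past an adjacent letter $u_{k_j}$ with $k_j > k_r-nm$, invoke the level-one straightening rule (Example \ref{ex3}/Proposition 3.16 of \cite{U}); show that when the two indices lie on the same runner the only surviving term is the plain transposition with coefficient $-q^{-1}$ (because the "correction" terms either vanish by the Pauli-type relation or are lower in the dominance order and cancel in the end), while when they lie on different runners the letters simply anticommute with coefficient contributing to the final sign bookkeeping, or produce correction terms that reassemble into ribbons of different spin. (3) Sum the monomial $B_{-\nu}$ contributions with weights $1/z_\nu$; since $V_m$ is, under the boson–symmetric-function dictionary of Definition \ref{p-h-s}, the complete homogeneous function $h_m$, and $h_m = \sum_\nu z_\nu^{-1} p_\nu$, the combinatorics of filling a single horizontal strip of $n$-ribbons (an $n$-ribbon tableau of weight $(m)$) matches the Murnaghan–Nakayama-type expansion; the uniqueness of the ribbon decomposition of $\mu\setminus\lambda$ (noted by Lascoux–Leclerc–Thibon) guarantees each $\mu$ appears once. (4) Check the sign: each of the $h$ bead-hops in building a ribbon of $n$-spin $h$ contributes one factor $-q^{-1}$, so a ribbon contributes $(-q^{-1})^{\mathrm{spin}_n(R)}$ and the whole tableau contributes $(-q^{-1})^{\mathrm{spin}_n(\mu/\lambda)}$.

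I expect the main obstacle to be step (2): controlling the "correction terms" in the straightening rules. A naive bead-hop generates, besides the leading transposition term, lower-order terms (in the sense of Definition \ref{order}) coming from the quadratic part of the straightening relations, and one must argue that after summing over all $r$ and over all $B_{-\nu}$ with the $1/z_\nu$ weights these spurious contributions either cancel or reorganize exactly into the ribbon sum. The cleanest way is probably to avoid term-by-term cancellation and instead argue by a bar-invariance and triangularity characterization: show that $\sum_{\lambda\rightsquigarrow\mu}(-q^{-1})^{\mathrm{spin}_n(\mu/\lambda)}|\mu\rangle$ is the unique element of $\Lambda^s$ that is $\ge$-triangular with leading term from the top ribbon-tableau and whose image under a suitable specialization recovers the classical complete-function action, then identify it with $V_m|\lambda\rangle$ using Proposition \ref{barcommute} and the known $q=1$ limit. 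Alternatively, one can cite the already-established level-one identity directly — but since the statement is being recalled here precisely as input for the higher-level sector-by-sector argument, it suffices to indicate that the level-one proof in \cite{LT} (via the combinatorics of the global crystal basis and the action of the bosons on the charged Fock space) goes through verbatim in the present normalization, the only bookkeeping being the conversion between their $q$ and $-q^{-1}$ spin conventions.
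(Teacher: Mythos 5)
The paper does not prove this statement at all: Theorem \ref{LT.th6.7} is recalled verbatim from \cite{LT}, Theorem 6.7, and is used later only as an input (to show $V_m'[j]\,|\Bf{\rho};\Bf{s}\rangle\in\mathcal{L}^{-}$ in the proof of Theorem \ref{tensorproductformula}). So the fallback you offer in your last sentence --- cite \cite{LT} --- is exactly what the paper does, and is the appropriate treatment here.

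Judged as an actual proof, however, your sketch has genuine gaps. First, the heart of the matter is precisely your step (2), and it is left unresolved: you must control the quadratic correction terms produced by the straightening relations and show they cancel or reassemble into ribbon contributions, and no mechanism for this is given. Moreover, your dichotomy is backwards relative to the rules recorded in this paper: by Proposition \ref{s.rule} specialized to $\ell=1$ (so $\delta_{d_1=d_2}=1$), an exchange of letters with \emph{different} residues modulo $n$ ($c_1\neq c_2$, i.e.\ different runners) has leading coefficient $-q^{-1}$ \emph{and} correction terms (see the level-one example $u_{-2}\wedge u_{1}=-q^{-1}u_{1}\wedge u_{-2}+(q^{-2}-1)u_{0}\wedge u_{-1}$), whereas same-residue exchanges carry $(-q^{-1})q^{\pm1}$; the powers of $-q^{-1}$ that build up $\mathrm{spin}_n$ come from the different-runner hops, which are exactly the ones generating the troublesome corrections. (Also note $B_{-m}$ with $m>0$ \emph{raises} a beta-number by $nm$, cf.\ Definition \ref{def.6.10}, not lowers it.) Second, the proposed rescue via bar-invariance plus triangularity does not characterize $V_m\,|\lambda\rangle$: Proposition \ref{barcommute} only says $B_{-m}$ (hence $V_m$) commutes with the bar involution, and $|\lambda\rangle$ itself is not bar-invariant, so $V_m\,|\lambda\rangle$ is not a bar-invariant element and no uniqueness principle of the type you invoke is available without substantial extra argument. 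If you intend to supply an independent proof rather than a citation, you would need the actual [LT]/LLT machinery (the Heisenberg action and ribbon-tableau combinatorics) rather than the term-by-term straightening bookkeeping sketched here.
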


\begin{example}
Let $n=3$, $\ell =1$, $m=2$ and $\lambda=(2)$. Then, 
$$
V_{2} \, | (2) ; s \rangle =  | (8) ; s \rangle -q^{-1} \, | (5,2,1) ; s \rangle + q^{-2} | (4,3,2) ; s \rangle 
	+ q^{-2} \, | (5,1^3) ; s \rangle + q^{-4} \, | (2^4) ; s \rangle - q^{-3} \, | (3^{2},1^{2}) ; s \rangle .
$$

\unitlength 0.1in
\begin{picture}( 38.0000,  6.1000)(  0.0000, -8.0000)
%
\special{pn 8}%
\special{pa 0 190}%
\special{pa 1590 190}%
\special{pa 1590 400}%
\special{pa 0 400}%
\special{pa 0 190}%
\special{fp}%
%
\special{pn 8}%
\special{pa 390 190}%
\special{pa 390 400}%
\special{fp}%
%
\special{pn 8}%
\special{pa 1000 190}%
\special{pa 1000 400}%
\special{fp}%
%
\special{pn 4}%
\special{pa 860 190}%
\special{pa 650 400}%
\special{fp}%
\special{pa 800 190}%
\special{pa 590 400}%
\special{fp}%
\special{pa 740 190}%
\special{pa 530 400}%
\special{fp}%
\special{pa 680 190}%
\special{pa 470 400}%
\special{fp}%
\special{pa 620 190}%
\special{pa 410 400}%
\special{fp}%
\special{pa 560 190}%
\special{pa 390 360}%
\special{fp}%
\special{pa 500 190}%
\special{pa 390 300}%
\special{fp}%
\special{pa 440 190}%
\special{pa 390 240}%
\special{fp}%
\special{pa 920 190}%
\special{pa 710 400}%
\special{fp}%
\special{pa 980 190}%
\special{pa 770 400}%
\special{fp}%
\special{pa 1000 230}%
\special{pa 830 400}%
\special{fp}%
\special{pa 1000 290}%
\special{pa 890 400}%
\special{fp}%
\special{pa 1000 350}%
\special{pa 950 400}%
\special{fp}%
%
\special{pn 4}%
\special{pa 1340 190}%
\special{pa 1130 400}%
\special{fp}%
\special{pa 1280 190}%
\special{pa 1070 400}%
\special{fp}%
\special{pa 1220 190}%
\special{pa 1010 400}%
\special{fp}%
\special{pa 1160 190}%
\special{pa 1000 350}%
\special{fp}%
\special{pa 1100 190}%
\special{pa 1000 290}%
\special{fp}%
\special{pa 1040 190}%
\special{pa 1000 230}%
\special{fp}%
\special{pa 1400 190}%
\special{pa 1190 400}%
\special{fp}%
\special{pa 1460 190}%
\special{pa 1250 400}%
\special{fp}%
\special{pa 1520 190}%
\special{pa 1310 400}%
\special{fp}%
\special{pa 1580 190}%
\special{pa 1370 400}%
\special{fp}%
\special{pa 1590 240}%
\special{pa 1430 400}%
\special{fp}%
\special{pa 1590 300}%
\special{pa 1490 400}%
\special{fp}%
\special{pa 1590 360}%
\special{pa 1550 400}%
\special{fp}%
%
\special{pn 8}%
\special{pa 1810 190}%
\special{pa 2230 190}%
\special{pa 2230 400}%
\special{pa 1810 400}%
\special{pa 1810 190}%
\special{fp}%
%
\special{pn 8}%
\special{pa 2230 190}%
\special{pa 2810 190}%
\special{pa 2810 400}%
\special{pa 2230 400}%
\special{pa 2230 190}%
\special{fp}%
%
\special{pn 8}%
\special{pa 1810 400}%
\special{pa 1810 800}%
\special{fp}%
\special{pa 1810 800}%
\special{pa 2020 800}%
\special{fp}%
%
\special{pn 8}%
\special{pa 2020 590}%
\special{pa 2020 800}%
\special{fp}%
\special{pa 2020 590}%
\special{pa 2230 590}%
\special{fp}%
\special{pa 2230 590}%
\special{pa 2230 410}%
\special{fp}%
%
\special{pn 4}%
\special{pa 2080 400}%
\special{pa 1810 670}%
\special{fp}%
\special{pa 2140 400}%
\special{pa 1810 730}%
\special{fp}%
\special{pa 2200 400}%
\special{pa 1810 790}%
\special{fp}%
\special{pa 2020 640}%
\special{pa 1860 800}%
\special{fp}%
\special{pa 2020 700}%
\special{pa 1920 800}%
\special{fp}%
\special{pa 2020 760}%
\special{pa 1980 800}%
\special{fp}%
\special{pa 2230 430}%
\special{pa 2070 590}%
\special{fp}%
\special{pa 2230 490}%
\special{pa 2130 590}%
\special{fp}%
\special{pa 2230 550}%
\special{pa 2190 590}%
\special{fp}%
\special{pa 2020 400}%
\special{pa 1810 610}%
\special{fp}%
\special{pa 1960 400}%
\special{pa 1810 550}%
\special{fp}%
\special{pa 1900 400}%
\special{pa 1810 490}%
\special{fp}%
\special{pa 1840 400}%
\special{pa 1810 430}%
\special{fp}%
%
\special{pn 4}%
\special{pa 2590 190}%
\special{pa 2380 400}%
\special{fp}%
\special{pa 2530 190}%
\special{pa 2320 400}%
\special{fp}%
\special{pa 2470 190}%
\special{pa 2260 400}%
\special{fp}%
\special{pa 2410 190}%
\special{pa 2230 370}%
\special{fp}%
\special{pa 2350 190}%
\special{pa 2230 310}%
\special{fp}%
\special{pa 2290 190}%
\special{pa 2230 250}%
\special{fp}%
\special{pa 2650 190}%
\special{pa 2440 400}%
\special{fp}%
\special{pa 2710 190}%
\special{pa 2500 400}%
\special{fp}%
\special{pa 2770 190}%
\special{pa 2560 400}%
\special{fp}%
\special{pa 2810 210}%
\special{pa 2620 400}%
\special{fp}%
\special{pa 2810 270}%
\special{pa 2680 400}%
\special{fp}%
\special{pa 2810 330}%
\special{pa 2740 400}%
\special{fp}%
%
\special{pn 8}%
\special{pa 3000 190}%
\special{pa 3420 190}%
\special{pa 3420 400}%
\special{pa 3000 400}%
\special{pa 3000 190}%
\special{fp}%
%
\special{pn 8}%
\special{pa 3000 400}%
\special{pa 3000 800}%
\special{fp}%
\special{pa 3000 800}%
\special{pa 3210 800}%
\special{fp}%
%
\special{pn 8}%
\special{pa 3210 590}%
\special{pa 3210 800}%
\special{fp}%
\special{pa 3210 590}%
\special{pa 3420 590}%
\special{fp}%
\special{pa 3420 590}%
\special{pa 3420 410}%
\special{fp}%
%
\special{pn 4}%
\special{pa 3270 400}%
\special{pa 3000 670}%
\special{fp}%
\special{pa 3330 400}%
\special{pa 3000 730}%
\special{fp}%
\special{pa 3390 400}%
\special{pa 3000 790}%
\special{fp}%
\special{pa 3210 640}%
\special{pa 3050 800}%
\special{fp}%
\special{pa 3210 700}%
\special{pa 3110 800}%
\special{fp}%
\special{pa 3210 760}%
\special{pa 3170 800}%
\special{fp}%
\special{pa 3420 430}%
\special{pa 3260 590}%
\special{fp}%
\special{pa 3420 490}%
\special{pa 3320 590}%
\special{fp}%
\special{pa 3420 550}%
\special{pa 3380 590}%
\special{fp}%
\special{pa 3210 400}%
\special{pa 3000 610}%
\special{fp}%
\special{pa 3150 400}%
\special{pa 3000 550}%
\special{fp}%
\special{pa 3090 400}%
\special{pa 3000 490}%
\special{fp}%
\special{pa 3030 400}%
\special{pa 3000 430}%
\special{fp}%
%
\special{pn 8}%
\special{pa 3420 190}%
\special{pa 3800 190}%
\special{fp}%
\special{pa 3800 190}%
\special{pa 3800 400}%
\special{fp}%
\special{pa 3800 400}%
\special{pa 3590 400}%
\special{fp}%
\special{pa 3600 400}%
\special{pa 3600 590}%
\special{fp}%
\special{pa 3600 590}%
\special{pa 3420 590}%
\special{fp}%
%
\special{pn 4}%
\special{pa 3710 190}%
\special{pa 3420 480}%
\special{fp}%
\special{pa 3770 190}%
\special{pa 3420 540}%
\special{fp}%
\special{pa 3600 420}%
\special{pa 3430 590}%
\special{fp}%
\special{pa 3600 480}%
\special{pa 3490 590}%
\special{fp}%
\special{pa 3600 540}%
\special{pa 3550 590}%
\special{fp}%
\special{pa 3800 220}%
\special{pa 3620 400}%
\special{fp}%
\special{pa 3800 280}%
\special{pa 3680 400}%
\special{fp}%
\special{pa 3800 340}%
\special{pa 3740 400}%
\special{fp}%
\special{pa 3650 190}%
\special{pa 3420 420}%
\special{fp}%
\special{pa 3590 190}%
\special{pa 3420 360}%
\special{fp}%
\special{pa 3530 190}%
\special{pa 3420 300}%
\special{fp}%
\special{pa 3470 190}%
\special{pa 3420 240}%
\special{fp}%
\end{picture}%
 \quad \quad 
\unitlength 0.1in
\begin{picture}( 24.2000,  8.3000)(  2.0000,-10.3000)
%
\special{pn 8}%
\special{pa 200 200}%
\special{pa 600 200}%
\special{pa 600 400}%
\special{pa 200 400}%
\special{pa 200 200}%
\special{fp}%
%
\special{pn 8}%
\special{pa 600 200}%
\special{pa 1200 200}%
\special{pa 1200 400}%
\special{pa 600 400}%
\special{pa 600 200}%
\special{fp}%
%
\special{pn 8}%
\special{pa 200 400}%
\special{pa 400 400}%
\special{pa 400 1000}%
\special{pa 200 1000}%
\special{pa 200 400}%
\special{fp}%
%
\special{pn 4}%
\special{pa 1060 200}%
\special{pa 860 400}%
\special{fp}%
\special{pa 1000 200}%
\special{pa 800 400}%
\special{fp}%
\special{pa 940 200}%
\special{pa 740 400}%
\special{fp}%
\special{pa 880 200}%
\special{pa 680 400}%
\special{fp}%
\special{pa 820 200}%
\special{pa 620 400}%
\special{fp}%
\special{pa 760 200}%
\special{pa 600 360}%
\special{fp}%
\special{pa 700 200}%
\special{pa 600 300}%
\special{fp}%
\special{pa 640 200}%
\special{pa 600 240}%
\special{fp}%
\special{pa 1120 200}%
\special{pa 920 400}%
\special{fp}%
\special{pa 1180 200}%
\special{pa 980 400}%
\special{fp}%
\special{pa 1200 240}%
\special{pa 1040 400}%
\special{fp}%
\special{pa 1200 300}%
\special{pa 1100 400}%
\special{fp}%
\special{pa 1200 360}%
\special{pa 1160 400}%
\special{fp}%
%
\special{pn 4}%
\special{pa 400 620}%
\special{pa 200 820}%
\special{fp}%
\special{pa 400 680}%
\special{pa 200 880}%
\special{fp}%
\special{pa 400 740}%
\special{pa 200 940}%
\special{fp}%
\special{pa 400 800}%
\special{pa 210 990}%
\special{fp}%
\special{pa 400 860}%
\special{pa 260 1000}%
\special{fp}%
\special{pa 400 920}%
\special{pa 320 1000}%
\special{fp}%
\special{pa 400 560}%
\special{pa 200 760}%
\special{fp}%
\special{pa 400 500}%
\special{pa 200 700}%
\special{fp}%
\special{pa 400 440}%
\special{pa 200 640}%
\special{fp}%
\special{pa 380 400}%
\special{pa 200 580}%
\special{fp}%
\special{pa 320 400}%
\special{pa 200 520}%
\special{fp}%
\special{pa 260 400}%
\special{pa 200 460}%
\special{fp}%
%
\special{pn 8}%
\special{pa 1400 210}%
\special{pa 1800 210}%
\special{pa 1800 1020}%
\special{pa 1400 1020}%
\special{pa 1400 210}%
\special{fp}%
%
\special{pn 8}%
\special{pa 1400 400}%
\special{pa 1800 400}%
\special{fp}%
\special{pa 1610 400}%
\special{pa 1610 1030}%
\special{fp}%
%
\special{pn 4}%
\special{pa 1610 720}%
\special{pa 1400 930}%
\special{fp}%
\special{pa 1610 780}%
\special{pa 1400 990}%
\special{fp}%
\special{pa 1610 840}%
\special{pa 1430 1020}%
\special{fp}%
\special{pa 1610 900}%
\special{pa 1490 1020}%
\special{fp}%
\special{pa 1610 960}%
\special{pa 1550 1020}%
\special{fp}%
\special{pa 1610 660}%
\special{pa 1400 870}%
\special{fp}%
\special{pa 1610 600}%
\special{pa 1400 810}%
\special{fp}%
\special{pa 1610 540}%
\special{pa 1400 750}%
\special{fp}%
\special{pa 1610 480}%
\special{pa 1400 690}%
\special{fp}%
\special{pa 1610 420}%
\special{pa 1400 630}%
\special{fp}%
\special{pa 1570 400}%
\special{pa 1400 570}%
\special{fp}%
\special{pa 1510 400}%
\special{pa 1400 510}%
\special{fp}%
\special{pa 1450 400}%
\special{pa 1400 450}%
\special{fp}%
%
\special{pn 4}%
\special{pa 1800 710}%
\special{pa 1610 900}%
\special{fp}%
\special{pa 1800 770}%
\special{pa 1610 960}%
\special{fp}%
\special{pa 1800 830}%
\special{pa 1620 1010}%
\special{fp}%
\special{pa 1800 890}%
\special{pa 1670 1020}%
\special{fp}%
\special{pa 1800 950}%
\special{pa 1730 1020}%
\special{fp}%
\special{pa 1800 650}%
\special{pa 1610 840}%
\special{fp}%
\special{pa 1800 590}%
\special{pa 1610 780}%
\special{fp}%
\special{pa 1800 530}%
\special{pa 1610 720}%
\special{fp}%
\special{pa 1800 470}%
\special{pa 1610 660}%
\special{fp}%
\special{pa 1800 410}%
\special{pa 1610 600}%
\special{fp}%
\special{pa 1750 400}%
\special{pa 1610 540}%
\special{fp}%
\special{pa 1690 400}%
\special{pa 1610 480}%
\special{fp}%
%
\special{pn 8}%
\special{pa 2010 390}%
\special{pa 2410 390}%
\special{fp}%
\special{pa 2220 390}%
\special{pa 2220 1020}%
\special{fp}%
%
\special{pn 4}%
\special{pa 2220 710}%
\special{pa 2010 920}%
\special{fp}%
\special{pa 2220 770}%
\special{pa 2010 980}%
\special{fp}%
\special{pa 2220 830}%
\special{pa 2040 1010}%
\special{fp}%
\special{pa 2220 890}%
\special{pa 2100 1010}%
\special{fp}%
\special{pa 2220 950}%
\special{pa 2160 1010}%
\special{fp}%
\special{pa 2220 650}%
\special{pa 2010 860}%
\special{fp}%
\special{pa 2220 590}%
\special{pa 2010 800}%
\special{fp}%
\special{pa 2220 530}%
\special{pa 2010 740}%
\special{fp}%
\special{pa 2220 470}%
\special{pa 2010 680}%
\special{fp}%
\special{pa 2220 410}%
\special{pa 2010 620}%
\special{fp}%
\special{pa 2180 390}%
\special{pa 2010 560}%
\special{fp}%
\special{pa 2120 390}%
\special{pa 2010 500}%
\special{fp}%
\special{pa 2060 390}%
\special{pa 2010 440}%
\special{fp}%
%
\special{pn 8}%
\special{pa 2010 210}%
\special{pa 2400 210}%
\special{pa 2400 390}%
\special{pa 2010 390}%
\special{pa 2010 210}%
\special{fp}%
%
\special{pn 8}%
\special{pa 2010 390}%
\special{pa 2010 1020}%
\special{fp}%
\special{pa 2010 1020}%
\special{pa 2220 1020}%
\special{fp}%
\special{pa 2220 1020}%
\special{pa 2240 1030}%
\special{fp}%
%
\special{pn 8}%
\special{pa 2400 210}%
\special{pa 2620 210}%
\special{fp}%
%
\special{pn 8}%
\special{pa 2620 210}%
\special{pa 2620 590}%
\special{fp}%
%
\special{pn 8}%
\special{pa 2220 590}%
\special{pa 2620 590}%
\special{fp}%
%
\special{pn 4}%
\special{pa 2620 410}%
\special{pa 2440 590}%
\special{fp}%
\special{pa 2620 350}%
\special{pa 2380 590}%
\special{fp}%
\special{pa 2620 290}%
\special{pa 2320 590}%
\special{fp}%
\special{pa 2620 230}%
\special{pa 2260 590}%
\special{fp}%
\special{pa 2400 390}%
\special{pa 2220 570}%
\special{fp}%
\special{pa 2340 390}%
\special{pa 2220 510}%
\special{fp}%
\special{pa 2280 390}%
\special{pa 2220 450}%
\special{fp}%
\special{pa 2580 210}%
\special{pa 2400 390}%
\special{fp}%
\special{pa 2520 210}%
\special{pa 2400 330}%
\special{fp}%
\special{pa 2460 210}%
\special{pa 2400 270}%
\special{fp}%
\special{pa 2620 470}%
\special{pa 2500 590}%
\special{fp}%
\special{pa 2620 530}%
\special{pa 2560 590}%
\special{fp}%
\end{picture}%

\end{example}


\subsection{A $q$-analogue of the tensor product theorem in the case of $\ell=1$}

\begin{definition}
A partition $\lambda = (\lambda_{1} , \lambda_{2} , \cdots )$ is {\it $n$-restricted} is 
\begin{equation}
	0 \le \lambda_{i} - \lambda_{i+1} < n \quad \text{for all $i=1,2,\cdots$}.
\end{equation}
\end{definition}

\begin{definition}
For $\lambda \in \Pi$, we define $\widetilde{\lambda}, \check{\lambda} \in \Pi$ as 
$\widetilde{\lambda}$ is $n$-restricted and $\lambda = \widetilde{\lambda} + n \check{\lambda}$. 
\label{tilde-check}
\end{definition}

\begin{example}
Let $n=3$ and $\lambda = (9,5,4,4)$. 
Then $(9,5,4,4) = 3 \cdot (2,1,1,1) + (3,2,1,1)$. 
Thus, $\widetilde{\lambda} = (3,2,1,1), \check{\lambda} = (2,1,1,1)$. 
\end{example}

\unitlength 0.1in
\begin{picture}( 40.0000,  8.1000)(  2.0000,-10.1000)
%
\special{pn 8}%
\special{pa 200 200}%
\special{pa 1990 200}%
\special{fp}%
\special{pa 210 200}%
\special{pa 210 1010}%
\special{fp}%
\special{pa 210 990}%
\special{pa 1000 990}%
\special{fp}%
%
\special{pn 8}%
\special{pa 1010 600}%
\special{pa 1010 990}%
\special{fp}%
\special{pa 1010 610}%
\special{pa 1210 610}%
\special{fp}%
\special{pa 1210 410}%
\special{pa 1210 610}%
\special{fp}%
\special{pa 1210 400}%
\special{pa 1990 400}%
\special{fp}%
\special{pa 1990 200}%
\special{pa 1990 410}%
\special{fp}%
%
\special{pn 8}%
\special{pa 800 200}%
\special{pa 800 1000}%
\special{fp}%
%
\special{pn 8}%
\special{pa 1210 200}%
\special{pa 1800 200}%
\special{pa 1800 400}%
\special{pa 1210 400}%
\special{pa 1210 200}%
\special{fp}%
%
\special{pn 4}%
\special{pa 800 400}%
\special{pa 220 980}%
\special{fp}%
\special{pa 800 460}%
\special{pa 270 990}%
\special{fp}%
\special{pa 800 520}%
\special{pa 330 990}%
\special{fp}%
\special{pa 800 580}%
\special{pa 390 990}%
\special{fp}%
\special{pa 800 640}%
\special{pa 450 990}%
\special{fp}%
\special{pa 800 700}%
\special{pa 510 990}%
\special{fp}%
\special{pa 800 760}%
\special{pa 570 990}%
\special{fp}%
\special{pa 800 820}%
\special{pa 630 990}%
\special{fp}%
\special{pa 800 880}%
\special{pa 690 990}%
\special{fp}%
\special{pa 800 940}%
\special{pa 750 990}%
\special{fp}%
\special{pa 800 340}%
\special{pa 210 930}%
\special{fp}%
\special{pa 800 280}%
\special{pa 210 870}%
\special{fp}%
\special{pa 800 220}%
\special{pa 210 810}%
\special{fp}%
\special{pa 760 200}%
\special{pa 210 750}%
\special{fp}%
\special{pa 700 200}%
\special{pa 210 690}%
\special{fp}%
\special{pa 640 200}%
\special{pa 210 630}%
\special{fp}%
\special{pa 580 200}%
\special{pa 210 570}%
\special{fp}%
\special{pa 520 200}%
\special{pa 210 510}%
\special{fp}%
\special{pa 460 200}%
\special{pa 210 450}%
\special{fp}%
\special{pa 400 200}%
\special{pa 210 390}%
\special{fp}%
\special{pa 340 200}%
\special{pa 210 330}%
\special{fp}%
\special{pa 280 200}%
\special{pa 210 270}%
\special{fp}%
%
\special{pn 4}%
\special{pa 1720 200}%
\special{pa 1520 400}%
\special{fp}%
\special{pa 1660 200}%
\special{pa 1460 400}%
\special{fp}%
\special{pa 1600 200}%
\special{pa 1400 400}%
\special{fp}%
\special{pa 1540 200}%
\special{pa 1340 400}%
\special{fp}%
\special{pa 1480 200}%
\special{pa 1280 400}%
\special{fp}%
\special{pa 1420 200}%
\special{pa 1220 400}%
\special{fp}%
\special{pa 1360 200}%
\special{pa 1210 350}%
\special{fp}%
\special{pa 1300 200}%
\special{pa 1210 290}%
\special{fp}%
\special{pa 1240 200}%
\special{pa 1210 230}%
\special{fp}%
\special{pa 1780 200}%
\special{pa 1580 400}%
\special{fp}%
\special{pa 1800 240}%
\special{pa 1640 400}%
\special{fp}%
\special{pa 1800 300}%
\special{pa 1700 400}%
\special{fp}%
\special{pa 1800 360}%
\special{pa 1760 400}%
\special{fp}%
\put(21.3000,-6.0000){\makebox(0,0)[lb]{$=$}}%
\put(24.5000,-6.0000){\makebox(0,0)[lb]{$3$}}%
%
\special{pn 8}%
\special{pa 2810 200}%
\special{pa 3010 200}%
\special{pa 3010 1010}%
\special{pa 2810 1010}%
\special{pa 2810 200}%
\special{fp}%
%
\special{pn 8}%
\special{pa 3010 200}%
\special{pa 3200 200}%
\special{pa 3200 400}%
\special{pa 3010 400}%
\special{pa 3010 200}%
\special{fp}%
%
\special{pn 4}%
\special{pa 3010 470}%
\special{pa 2810 670}%
\special{fp}%
\special{pa 3010 530}%
\special{pa 2810 730}%
\special{fp}%
\special{pa 3010 590}%
\special{pa 2810 790}%
\special{fp}%
\special{pa 3010 650}%
\special{pa 2810 850}%
\special{fp}%
\special{pa 3010 710}%
\special{pa 2810 910}%
\special{fp}%
\special{pa 3010 770}%
\special{pa 2810 970}%
\special{fp}%
\special{pa 3010 830}%
\special{pa 2830 1010}%
\special{fp}%
\special{pa 3010 890}%
\special{pa 2890 1010}%
\special{fp}%
\special{pa 3010 950}%
\special{pa 2950 1010}%
\special{fp}%
\special{pa 3010 410}%
\special{pa 2810 610}%
\special{fp}%
\special{pa 3010 350}%
\special{pa 2810 550}%
\special{fp}%
\special{pa 3010 290}%
\special{pa 2810 490}%
\special{fp}%
\special{pa 3010 230}%
\special{pa 2810 430}%
\special{fp}%
\special{pa 2980 200}%
\special{pa 2810 370}%
\special{fp}%
\special{pa 2920 200}%
\special{pa 2810 310}%
\special{fp}%
\special{pa 2860 200}%
\special{pa 2810 250}%
\special{fp}%
%
\special{pn 4}%
\special{pa 3200 280}%
\special{pa 3080 400}%
\special{fp}%
\special{pa 3200 220}%
\special{pa 3020 400}%
\special{fp}%
\special{pa 3160 200}%
\special{pa 3010 350}%
\special{fp}%
\special{pa 3100 200}%
\special{pa 3010 290}%
\special{fp}%
\special{pa 3040 200}%
\special{pa 3010 230}%
\special{fp}%
\special{pa 3200 340}%
\special{pa 3140 400}%
\special{fp}%
\put(33.0000,-6.0000){\makebox(0,0)[lb]{$+$}}%
%
\special{pn 8}%
\special{pa 3610 200}%
\special{pa 3610 1010}%
\special{fp}%
\special{pa 3610 1000}%
\special{pa 3800 1000}%
\special{fp}%
\special{pa 3800 1000}%
\special{pa 3800 600}%
\special{fp}%
\special{pa 3800 600}%
\special{pa 4000 600}%
\special{fp}%
\special{pa 4000 600}%
\special{pa 4000 200}%
\special{fp}%
\special{pa 4000 200}%
\special{pa 3600 200}%
\special{fp}%
%
\special{pn 8}%
\special{pa 4000 200}%
\special{pa 4200 200}%
\special{pa 4200 390}%
\special{pa 4000 390}%
\special{pa 4000 200}%
\special{fp}%
\end{picture}%

\begin{theorem}[Leclerc-Thibon\cite{LT}, Theorem 6.9]

Let $\lambda = \widetilde{\lambda} + n \check{\lambda}$. Then 
\begin{equation}
G^{-}(\lambda) = S_{\check{\lambda}} G^{-} (\widetilde{\lambda}) .
\end{equation}
\label{LTformula}
\end{theorem}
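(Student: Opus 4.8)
I would show that the vector $x := S_{\check{\lambda}}\,G^{-}(\widetilde{\lambda})$ satisfies the two properties that uniquely characterise $G^{-}(\lambda)$ in \cite[Theorem 3.25]{U}, namely $\overline{x} = x$ and $x \equiv |\lambda\rangle \pmod{q^{-1}\mathcal{L}^{-}}$, and then conclude $x = G^{-}(\lambda)$ from the uniqueness there (which rests on the unitriangularity (\ref{unitriangularity}) of the bar matrix). First, $x$ is a genuine element of $\Bf{F}_{q}[\Bf{s}]$: the bosons $B_{-m}$ act on $\Bf{F}_{q}[\Bf{s}]$ and raise the size of a Young diagram by $nm$, so $S_{\check{\lambda}}$ raises it by $n|\check{\lambda}|$, sending $|\widetilde{\lambda}\rangle$ into the finite-dimensional span of the $|\mu\rangle$ with $|\mu| = |\widetilde{\lambda}| + n|\check{\lambda}| = |\lambda|$. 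Bar-invariance is immediate: $\overline{\phantom{x}}$ is $\mathbb{Q}$-linear, $S_{\check{\lambda}}$ is a $\mathbb{Q}$-linear combination of products of the operators $B_{-m}$, and each $B_{-m}$ commutes with $\overline{\phantom{x}}$ by Proposition \ref{barcommute}; hence $\overline{x} = S_{\check{\lambda}}\,\overline{G^{-}(\widetilde{\lambda})} = S_{\check{\lambda}}\,G^{-}(\widetilde{\lambda}) = x$.

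\textbf{Reduction modulo $q^{-1}$.} By Theorem \ref{LT.th6.7}, each $V_{m}$ sends a standard basis vector to a $\mathbb{Z}[q^{-1}]$-linear combination of standard basis vectors; since the inverse Kostka numbers in Definition \ref{p-h-s} are integers, $S_{\check{\lambda}}$ does likewise, so $S_{\check{\lambda}}$ preserves $\mathcal{L}^{-}$ and, being $\mathbb{Q}(q)$-linear, carries $q^{-1}\mathcal{L}^{-}$ into $q^{-1}\mathcal{L}^{-}$. Writing $G^{-}(\widetilde{\lambda}) = |\widetilde{\lambda}\rangle + y$ with $y \in q^{-1}\mathcal{L}^{-}$, we get $x = S_{\check{\lambda}}\,|\widetilde{\lambda}\rangle + S_{\check{\lambda}}\,y$ with $S_{\check{\lambda}}\,y \in q^{-1}\mathcal{L}^{-}$. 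Hence it suffices to prove
\begin{equation*}
S_{\check{\lambda}}\,|\widetilde{\lambda}\rangle \;\equiv\; |\lambda\rangle \pmod{q^{-1}\mathcal{L}^{-}},
\end{equation*}
i.e. that the specialisation of $S_{\check{\lambda}}\,|\widetilde{\lambda}\rangle$ at $q^{-1} = 0$ is exactly the single vector $|\lambda\rangle$.

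\textbf{The combinatorial core.} Iterating Theorem \ref{LT.th6.7} and using Definition \ref{p-h-s}, $S_{\check{\lambda}}\,|\widetilde{\lambda}\rangle$ becomes a sum, over skew shapes $\mu/\widetilde{\lambda}$ tileable by $n$-ribbons in a shape-$\check{\lambda}$ pattern, of terms $(-q^{-1})^{\mathrm{spin}_{n}}$ weighted by signs and multiplicities inherited from the inverse Kostka numbers. Setting $q^{-1}=0$ retains only the spin-$0$ contributions, in which every ribbon is a horizontal strip of $n$ cells; in that limit the $V_{m}$ become the ``add a horizontal $n$-ribbon strip of size $m$'' operators and $S_{\check{\lambda}}$ the corresponding Schur-type operator. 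The point to establish is that, precisely because $\widetilde{\lambda}$ is $n$-restricted, the only diagram so reachable is $\mu = \widetilde{\lambda} + n\check{\lambda} = \lambda$, occurring with total coefficient $1$. I would establish this via the Stanton--White/Lascoux--Leclerc--Thibon correspondence between $n$-ribbon tableaux and tuples of ordinary tableaux on the $n$-quotient, under which the spin-$0$, horizontal-strip part of $S_{\check{\lambda}}$ reduces to ordinary Pieri/Schur combinatorics: $n$-restrictedness of $\widetilde{\lambda}$ forces the added horizontal strips to attach below $\widetilde{\lambda}$ without collision, so no overcounting or cancellation occurs and the coefficient is exactly $1$. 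I expect this verification to be the main obstacle.

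\textbf{Conclusion.} Granting the displayed congruence, $x \in \mathcal{L}^{-}$ is bar-invariant and $x \equiv |\lambda\rangle \pmod{q^{-1}\mathcal{L}^{-}}$; hence $x - G^{-}(\lambda)$ is a bar-invariant element of $q^{-1}\mathcal{L}^{-}$, which by (\ref{unitriangularity}) must vanish. Therefore $G^{-}(\lambda) = S_{\check{\lambda}}\,G^{-}(\widetilde{\lambda})$.
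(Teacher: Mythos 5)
Your argument is essentially the route of Leclerc--Thibon (and of this paper's own higher-level analogue, Theorem \ref{tensorproductformula}): characterize $G^{-}(\lambda)$ by bar-invariance plus the congruence modulo $q^{-1}\mathcal{L}^{-}$, obtain bar-invariance from Proposition \ref{barcommute} since $S_{\check{\lambda}}$ is a $\mathbb{Q}$-linear combination of products of the $B_{-m}$, and reduce everything to $S_{\check{\lambda}}\,|\widetilde{\lambda}\rangle \equiv |\lambda\rangle \pmod{q^{-1}\mathcal{L}^{-}}$, which is exactly Lemma \ref{LTlemma2}, the statement quoted here from the proof of \cite{LT}*{Theorem 6.9}. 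The only point where you stop short is that congruence itself, which you sketch and rightly flag as the main obstacle: be aware that many diagrams are reachable by spin-$0$ (horizontal) ribbon strips, so the coefficient-$1$ claim is not that $\lambda$ is the only reachable shape but that the signed inverse-Kostka combination telescopes, with $n$-restrictedness of $\widetilde{\lambda}$ making the horizontal-ribbon-strip combinatorics match ordinary Pieri multiplication---precisely the verification carried out in \cite{LT}.
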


{\bf Remark.} Theorem \ref{LTformula} is a formal analogue of Lusztig's tensor product theorem. 
(see \cite[\S 3.2]{LT}.)

\begin{example}
In this example, we write $| \lambda ; s \rangle$ simply $\lambda$. 

(i) Let $n=2$, $\lambda = (4) = \emptyset + n (2)$. 
Then, 

\begin{align*}
	G^{-}((4)) 
	&= S_{(2)} \, G^{-}(\emptyset)  
	=V_{(2)} \, \emptyset 
	= (4) - q^{-1} (3 , 1) + q^{-2} (2 , 2) .
\end{align*}

(ii) Let $n=2$ and $\lambda = (2 , 2) = \emptyset + n (1 , 1)$. Then, 
\begin{align*}
	G^{-}((2 , 2)) 
	&= S_{(1 , 1)} G^{-}(\emptyset) 
	=(V_{(1,1)} - V_{2}) \emptyset  
	= (2 , 2) - q^{-1} (2 , 1 , 1) + q^{-2} (1^{4}) .
\end{align*}
\end{example}

%
%

\section{A $q$-analogue of the tensor product theorem of higher levels}

\subsection{$M$-dominancy}

\begin{definition}
For $M \in \mathbb{Z}_{\ge 0}$, $| \boldsymbol{\lambda} ; \boldsymbol{s} \rangle$ is {\it $M$-dominant} if 
\begin{equation}
	s_{i} - s_{i+1} \geq M + | \boldsymbol{\lambda} | 
\end{equation}
for all $i=1,2,\cdots,\ell-1$. 
\end{definition}

\begin{definition}
For $1 \leq i \leq l$, we define linear operators $B_{-m}'[i]$ by 

\begin{equation}
B_{-m}'[i] \, | (\lambda^{(1)} , \cdots , \lambda^{(i)} , \cdots,  \lambda^{(l)}) ; \boldsymbol{s} \rangle =
| (\lambda^{(1)} , \cdots , B_{-m} \lambda^{(i)} , \cdots,  \lambda^{(l)}) ; \boldsymbol{s} \rangle  ,
\end{equation}
where the right hand side is understood as 
$$
| (\lambda^{(1)} , \cdots , B_{-m} \lambda^{(i)} , \cdots,  \lambda^{(l)}) ; \boldsymbol{s} \rangle 
= \sum_{\mu} c_{\mu} | (\lambda^{(1)} , \cdots , \mu , \cdots,  \lambda^{(l)}) ; \boldsymbol{s} \rangle
$$
if $B_{-m} | \lambda^{(i)} \rangle = \sum_{\mu} c_{\mu} | \mu \rangle$ in the $q$-deformed Fock space of level one. 
\end{definition}

\begin{proposition}[\cite{U}, Proposition 5.3(i)] 

Let $m \in \mathbb{Z}_{> 0}$. 
Suppose that $| \boldsymbol{\lambda} ; \boldsymbol{s} \rangle$ is $nm$-dominant. 
Then, 

\begin{equation}
B_{-m} \, | \boldsymbol{\lambda} ; \boldsymbol{s} \rangle = 
\sum_{i=1}^{l} q^{(i-1)m} B_{-m}'[i] \, | \boldsymbol{\lambda} ; \boldsymbol{s} \rangle . 
\end{equation}
\label{uglov5}
\end{proposition}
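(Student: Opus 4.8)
I would establish the formula by a direct computation on the semi-infinite $q$-wedges, using the straightening rules together with the $nm$-dominance hypothesis to decouple the $\ell$ sectors. Write $|\boldsymbol{\lambda};\boldsymbol{s}\rangle=u_{\boldsymbol{k}}$ with $\boldsymbol{k}\in P^{++}(s)$, and for each $r$ let $d_r\in\{1,\dots,\ell\}$ be the sector of $k_r$, determined by (\ref{k->cdm}). By the definition of the boson action, $B_{-m}u_{\boldsymbol{k}}=\sum_{r\ge1}u_{k_1}\wedge\cdots\wedge u_{k_r+n\ell m}\wedge u_{k_{r+1}}\wedge\cdots$. The plan is to split this sum according to the sector $d=d_r$ of the raised bead and to show that the resulting partial sum, once straightened, equals $q^{(d-1)m}\,B_{-m}'[d]\,|\boldsymbol{\lambda};\boldsymbol{s}\rangle$; adding up over $d$ then gives the statement.

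The structural input is a description of $u_{\boldsymbol{k}}$ in the dominant chamber. Because $s_i-s_{i+1}\ge nm+|\boldsymbol{\lambda}|$, the $n\ell$-runner abacus presentation splits, away from a common vacuum tail, into $\ell$ blocks: the beads of sector $d$ occupy the runners $n(d-1)+1,\dots,nd$, and the active window of sector $d$ lies far above that of sector $d+1$. Raising a bead of sector $d$ by $n\ell m$ keeps it on its runner, hence in sector $d$, and the gap is wide enough that it never enters the active window of another sector; so reordering the resulting wedge splits into (i) the straightening inside sector $d$'s block, which by the sector-to-level-one identification recalled just after Example \ref{exam.2.5} is governed by the level-one straightening rules, and (ii) a cross-sector part consisting of commutations of the raised bead past beads of the $d-1$ sectors lying above it.

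Granting this, part (i) shows that the intra-sector contribution of the partial sum over $\{r:d_r=d\}$ is exactly the level-one computation of $B_{-m}$ on the $d$-th component: it replaces $\lambda^{(d)}$ by $B_{-m}\lambda^{(d)}$ and leaves the other components alone, i.e. it is $B_{-m}'[d]\,|\boldsymbol{\lambda};\boldsymbol{s}\rangle$ up to a scalar. Part (ii) produces that scalar: the cross-sector straightening, summed over all raised beads, collapses to a single power of $q$, and a count on the abacus of the beads lying between the old and the new position of a raised bead pins this power down to $q^{(d-1)m}$ — heuristically one factor $q$ for each of the $d-1$ sectors above, per row traversed. Since $B_{-m}$ increases $|\boldsymbol{\lambda}|$ by $m$, one also notes that, because the hypothesis is the sharp bound $nm+|\boldsymbol{\lambda}|$, the output of $B_{-m}$ is again $0$-dominant, which is what will make the formula iterable later.

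The main obstacle is part (ii): one must verify that under $nm$-dominance the cross-sector straightening really does collapse to $q^{(d-1)m}$, i.e. that all the correction terms the straightening rules attach to pairs of beads on distinct runners (see Example \ref{ex3}) cancel once one sums over every raised bead and every intermediate straightening step. This is precisely where the value $nm+|\boldsymbol{\lambda}|$ is used — it keeps a raised bead out of every neighbouring sector's active window and far enough from the beads it must pass that the surviving corrections occur in cancelling pairs. Conceptually this is the assertion that in the dominant chamber $\boldsymbol{F}_q[\boldsymbol{s}]$ factors, as a module over the Heisenberg algebra generated by the $B_{-m}$, as a tensor product of the level-one Fock spaces $\boldsymbol{F}_q[s_d]$ on which $B_{-m}$ acts by the stated $q$-weighted sum; with that factorization available, matching each factor to the level-one boson and counting the exponent $(d-1)m$ is routine bookkeeping.
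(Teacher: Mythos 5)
There is a genuine gap, and you have in fact named it yourself: everything hinges on your ``part (ii)'', the claim that under $nm$-dominance the cross-sector straightening of the raised bead collapses to the single scalar $q^{(d-1)m}$, and you never prove it — you only assert that the correction terms ``occur in cancelling pairs.'' That assertion is the entire content of the proposition (the intra-sector part is immediate from the sector-to-level-one identification), so leaving it as a heuristic means no proof has been given. Note also that the proposed mechanism is not the right one: in arguments of this kind (compare Lemma \ref{lem3}, Lemma \ref{lemma1} and Lemma \ref{lemma2} in Section 5, which the author develops to handle exactly this sort of issue for Proposition \ref{mainconj}), the unwanted correction terms produced by commuting a raised bead past beads of other sectors do not cancel pairwise; they vanish individually, because dominance forces the resulting finite wedges to have negative ``size'' relative to the shifted charge, hence to be zero, while the surviving diagonal commutations contribute powers of $q$ counted by the statistic $\xi$ of Definition \ref{def-xi}. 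Carrying out that vanishing argument and the $\xi$-count to get exactly $q^{(i-1)m}$ is the real work, and it is absent from your sketch. For the record, the paper itself does not reprove this statement: it is quoted from Uglov \cite{U}, Proposition 5.3(i), whose proof proceeds by precisely this kind of abacus/wedge analysis.

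Two smaller points. First, the factor you must track is per raised bead in sector $i$: commuting $u_{k_r+n\ell m}$ past the $m$ intervening full rows of each of the $i-1$ sectors lying above; making ``per row traversed'' precise requires Lemma \ref{lem3}-type statements, not a heuristic. Second, your closing remark is off: applying $B_{-m}$ increases $|\boldsymbol{\lambda}|$ by $nm$ (each raised bead shifts its in-sector position by $nm$), not by $m$; with the correct increment the observation that an $nm$-dominant vector is sent into the span of $0$-dominant ones does hold, but as stated your bookkeeping would not even match the hypothesis.
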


\subsection{A $q$-analogue of the tensor product theorem of higher levels}

\begin{definition}
For $1 \le j \le \ell$, we define $B_{-m}[j]$ as follows. 
\begin{align}
	B_{-m}[\ell] = B_{-m}'[\ell] \quad , \quad 
	B_{-m}[j] = B_{-m}'[j] - q^{-m} B_{-m}'[j+1] \quad , \quad (1 \le j \le \ell-1).
\end{align}
For $1 \le j \le \ell$, 
\begin{align}
	B_{-m}[j,\ell] = \sum_{i = j}^{\ell} q^{(i-j)m} B_{-m}'[i] .
\end{align}
\label{defB}
\end{definition}

\begin{lemma}
If $u = | \boldsymbol{\lambda} ; \boldsymbol{s} \rangle$ is $nm$-dominant, then 
\begin{align}
	B_{-m} \, u &= \sum_{j=1}^{\ell} \frac{q^{jm} - q^{-jm}}{q^{m} - q^{-m}} B_{-m}[j] \, u.
\label{hoshi}
\end{align}

\end{lemma}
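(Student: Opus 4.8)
The plan is to rewrite Proposition \ref{uglov5} in the ``basis'' $\{B_{-m}[j]\}_{1\le j\le\ell}$ instead of the ``basis'' $\{B_{-m}'[i]\}_{1\le i\le\ell}$. Proposition \ref{uglov5} asserts that under $nm$-dominancy,
\begin{equation*}
B_{-m}\,u=\sum_{i=1}^{\ell}q^{(i-1)m}B_{-m}'[i]\,u,
\end{equation*}
so the whole statement reduces to a purely formal identity in the $\mathbb{Q}(q)$-span of the operators $B_{-m}'[1],\dots,B_{-m}'[\ell]$, namely
\begin{equation*}
\sum_{i=1}^{\ell}q^{(i-1)m}B_{-m}'[i]=\sum_{j=1}^{\ell}\frac{q^{jm}-q^{-jm}}{q^{m}-q^{-m}}\,B_{-m}[j].
\end{equation*}
(One should note that $u$ being $nm$-dominant is inherited by all the intermediate tuples appearing when one applies the level-one operators $B_{-m}'[i]$, since these do not change $\boldsymbol{s}$ and only change one component of $\boldsymbol{\lambda}$ by adding/removing boxes in a way compatible with the bound; but in fact one does not even need this, because the asserted identity is an identity of operators, to be checked coefficient-wise on each $B_{-m}'[i]$.)

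First I would substitute the definition $B_{-m}[j]=B_{-m}'[j]-q^{-m}B_{-m}'[j+1]$ for $1\le j\le\ell-1$ and $B_{-m}[\ell]=B_{-m}'[\ell]$ into the right-hand side, and collect the coefficient of each $B_{-m}'[i]$. Writing $d_j=\dfrac{q^{jm}-q^{-jm}}{q^{m}-q^{-m}}$, the coefficient of $B_{-m}'[i]$ in $\sum_j d_j B_{-m}[j]$ is $d_i-q^{-m}d_{i-1}$ for $2\le i\le\ell$ and $d_1$ for $i=1$ (the $-q^{-m}B_{-m}'[j+1]$ term from $j=i-1$ contributes $-q^{-m}d_{i-1}$, and the $B_{-m}'[j]$ term from $j=i$ contributes $d_i$; note $d_\ell$ appears in $B_{-m}[\ell]$ with no subtraction, which is consistent). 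So it remains to check $d_1=1$ and $d_i-q^{-m}d_{i-1}=q^{(i-1)m}$ for $2\le i\le\ell$.

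The computation $d_1=\dfrac{q^m-q^{-m}}{q^m-q^{-m}}=1$ is immediate, and for the second relation one expands
\begin{equation*}
d_i-q^{-m}d_{i-1}=\frac{(q^{im}-q^{-im})-q^{-m}(q^{(i-1)m}-q^{-(i-1)m})}{q^{m}-q^{-m}}=\frac{q^{im}-q^{-im}-q^{(i-2)m}+q^{-im}}{q^{m}-q^{-m}}=\frac{q^{im}-q^{(i-2)m}}{q^{m}-q^{-m}}=q^{(i-1)m},
\end{equation*}
where the last equality uses $q^{im}-q^{(i-2)m}=q^{(i-1)m}(q^m-q^{-m})$. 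This establishes the two required scalar identities and hence the operator identity, and combining it with Proposition \ref{uglov5} gives (\ref{hoshi}).

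There is essentially no obstacle here: the only thing to be slightly careful about is the telescoping bookkeeping at the two ends of the range (the term $j=\ell$ in $B_{-m}[\ell]$ having no subtracted part, and the term $i=1$ receiving no contribution from a subtracted part), but this is exactly accounted for by $B_{-m}[\ell]=B_{-m}'[\ell]$ and by $d_1=1$ respectively. If one prefers a slicker presentation, one can instead verify the matrix identity abstractly: the change-of-basis matrix from $\{B_{-m}'[i]\}$ to $\{B_{-m}[j]\}$ is upper-triangular bidiagonal with $1$'s on the diagonal and $-q^{-m}$ on the superdiagonal, its inverse has $(i,j)$-entry $q^{-(j-i)m}$ for $i\le j$, and applying this inverse to the vector $(q^{(i-1)m})_{i}$ gives the $j$-th entry $\sum_{i\le j}q^{-(j-i)m}q^{(i-1)m}=q^{-m}\sum_{i=1}^{j}q^{(2i-j)m}$, which sums the geometric series to $d_j$; I would likely just present the direct telescoping version since it is shortest.
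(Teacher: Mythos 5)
Your proposal is correct and follows essentially the same route as the paper: substitute $B_{-m}[j]=B_{-m}'[j]-q^{-m}B_{-m}'[j+1]$ (with $B_{-m}[\ell]=B_{-m}'[\ell]$), telescope the coefficients to get $B_{-m}'[1]+\sum_{j\ge 2}q^{(j-1)m}B_{-m}'[j]$, and invoke Proposition \ref{uglov5}. The paper's proof is exactly this coefficient computation, so there is nothing to add.
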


\begin{proof}
\begin{align*}
	\sum_{j=1}^{\ell} \frac{q^{jm} - q^{-jm}}{q^{m} - q^{-m}} B_{-m}[j] 
	&= \sum_{j=1}^{\ell} \frac{q^{jm} - q^{-jm}}{q^{m} - q^{-m}} B_{-m}'[j]  
		- q^{-m} \sum_{j=1}^{\ell-1} \frac{q^{jm} - q^{-jm}}{q^{m} - q^{-m}} B_{-m}'[j+1] \\
	&= B_{-m}'[1]  
		+ \sum_{j=2}^{\ell} \left( \frac{q^{jm} - q^{-jm}}{q^{m} - q^{-m}} - q^{-m} \frac{q^{(j-1)m} - q^{-(j-1)m}}{q^{m} - q^{-m}} \right) 
		B_{-m}'[j]  \\
	&= B_{-m}'[1] + \sum_{j=2}^{\ell} q^{(j-1)m} B_{-m}'[j] .
\end{align*}
Hence, the assertion follows from Proposition \ref{uglov5}. 
\end{proof}

The next proposition is the key proposition in this paper and will be proved in section 5. 

\begin{proposition}
Let $m \in \mathbb{Z}_{>0}$ and $1 \le i \le \ell$. 
If $u = | \boldsymbol{\lambda} ; \boldsymbol{s} \rangle$ is $nm$-dominant, then \\
\begin{enumerate}
\item[(i)]
$\dis{\overline{B_{-m}[j] \, u} = B_{-m}[j] \, \overline{u}. }$
\item[(ii)]
$\dis{\overline{B_{-m}[j,\ell] \, u} = B_{-m}[j,\ell] \, \overline{u}. }$
\end{enumerate}
\label{mainconj}
\end{proposition}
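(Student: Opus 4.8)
The plan is to reduce both parts to the level-one commutation result of Proposition~\ref{barcommute} together with the explicit $nm$-dominant expansion formula of Proposition~\ref{uglov5}. First I would treat part~(ii). The key observation is that for an $nm$-dominant vector $u = |\boldsymbol{\lambda};\boldsymbol{s}\rangle$, the operator $B_{-m}[j,\ell] = \sum_{i=j}^{\ell} q^{(i-j)m} B_{-m}'[i]$ acts only on the components $\lambda^{(j)}, \lambda^{(j+1)}, \ldots, \lambda^{(\ell)}$, and in fact, up to the overall power $q^{-(j-1)m}$, it coincides with the restriction of the genuine boson $B_{-m}$ to the ``truncated'' Fock space obtained by deleting the first $j-1$ components (which is itself an $nm$-dominant situation in level $\ell-j+1$); for $j=1$ it is literally $B_{-m}$ by Proposition~\ref{uglov5}. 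I would make this precise by checking that the abacus of an $M$-dominant multi-charge splits into sectors that do not interact under the straightening rules in the relevant degree range, so that $B_{-m}[j,\ell]\,u$ is computed entirely within the sub-configuration on runners indexed by $d\ge j$. Granting this, the bar involution of the excerpt (Definition~\ref{barinv}) respects the same sector decomposition — the combinatorial factors $(-q)^{\kappa(d_\bullet)}q^{-\kappa(c_\bullet)}$ are multiplicative over sectors when the beads of distinct sectors are separated — so $\overline{B_{-m}[j,\ell]\,u} = B_{-m}[j,\ell]\,\overline{u}$ follows from Proposition~\ref{barcommute} applied inside the truncated Fock space.

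For part~(i), I would then deduce the statement for $B_{-m}[j]$ from part~(ii) by the telescoping identity implicit in Definition~\ref{defB}: one has $B_{-m}[j] = B_{-m}[j,\ell] - q^{-m}\,q^{?}\cdots$, more precisely $B_{-m}'[j] = B_{-m}[j,\ell] - q^{m}B_{-m}[j+1,\ell]$ is not quite right, so instead I would use $B_{-m}[j,\ell] - q^{m}B_{-m}[j+1,\ell] = B_{-m}'[j]$ together with $B_{-m}[j] = B_{-m}'[j] - q^{-m}B_{-m}'[j+1]$, which expresses each $B_{-m}[j]$ as a $\mathbb{Q}(q)$-linear combination of the operators $B_{-m}[j',\ell]$ with $j' \ge j$. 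Since the bar involution is only $\mathbb{Q}$-linear (it inverts $q$), one cannot simply pull scalars through; instead I would argue that each $B_{-m}[j,\ell]\,u$ and hence each $B_{-m}'[j]\,u$ (for $u$ $nm$-dominant) lies in the span of $nm$-dominant vectors again — this needs that adding an $n$-ribbon strip of the appropriate degree to one component preserves the dominance inequalities, which holds because $|\boldsymbol{\mu}| = |\boldsymbol{\lambda}|$ is unchanged — and then compute $\overline{B_{-m}[j]\,u}$ directly from the two-term definition, tracking the $q^{-m}$ factor: $\overline{q^{-m}B_{-m}'[j+1]\,u} = q^{m}\,\overline{B_{-m}'[j+1]\,u} = q^{m}B_{-m}'[j+1]\,\overline{u}$, which does \emph{not} match $q^{-m}B_{-m}'[j+1]\,\overline{u}$ — so the naive approach fails, and this is exactly where the real content lies.

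Therefore the honest route is: prove (i) first, \emph{not} by reduction to (ii), but by the same sector-decomposition argument applied directly to $B_{-m}[j] = B_{-m}'[j] - q^{-m}B_{-m}'[j+1]$. The point is that this particular combination is the one that corresponds, under the abacus identification, to the level-one boson acting in a \emph{single} sector while correctly book-keeping the interface with the next sector; the coefficient $-q^{-m}$ is precisely the one produced by moving a bead across one sector boundary in the straightening rules (compare Example~\ref{ex3}), and it is bar-invariant in the sense that the bar involution sends this interface contribution to itself because the $\kappa(d_\bullet)$ exponent changes in a way that exactly compensates the $q \mapsto q^{-1}$. I would then obtain (ii) from (i) by induction on $\ell - j$, using $B_{-m}[j,\ell] = B_{-m}[j] + q^{m}B_{-m}[j+1,\ell] + (\text{lower interface terms})$ arranged so that all coefficients appearing are bar-invariant.

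\textbf{The main obstacle.} The crux is establishing that for an $nm$-dominant $u$ the straightening rules needed to compute $B_{-m}'[j]\,u$ never move a bead out of its sector by more than the controlled amount encoded in the $q^{-m}$ corrections of Definition~\ref{defB}, and dually that the bar involution's combinatorial exponents $\kappa(c_1,\ldots,c_r)$ and $\kappa(d_1,\ldots,d_r)$ decompose compatibly with this sector structure. Once that bookkeeping is pinned down — essentially a careful reading of Uglov's straightening rules \cite{U} on the $M$-dominant abacus — both parts follow, with (i) being the substantive case and (ii) a formal consequence; the degree-preservation fact ($|\boldsymbol{\mu}|=|\boldsymbol{\lambda}|$) is what keeps us inside the dominant region throughout the induction. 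Since the proof is deferred to Section~5, I would there set up the abacus notation carefully and verify the sector-splitting lemma in full before assembling these pieces.
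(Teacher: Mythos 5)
You dismissed the deduction of (i) from (ii) too quickly, and this is exactly the step the paper uses. From Definition \ref{defB} one gets $B_{-m}'[j] = B_{-m}[j,\ell] - q^{m}B_{-m}[j+1,\ell]$, hence $B_{-m}[j] = B_{-m}'[j] - q^{-m}B_{-m}'[j+1] = B_{-m}[j,\ell] - (q^{m}+q^{-m})\,B_{-m}[j+1,\ell] + B_{-m}[j+2,\ell]$. The coefficients $1$, $-(q^{m}+q^{-m})$, $1$ are invariant under $q\mapsto q^{-1}$, so (i) follows from (ii) by $\mathbb{Q}$-linearity of the bar involution; your objection (that one cannot pull $q^{-m}$ through the bar) only applies to the naive two-term expression, not to this telescoped three-term combination, which you stopped short of computing. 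Having discarded that route, you propose instead to prove (i) directly by a ``sector'' argument in which the coefficient $-q^{-m}$ is claimed to be compensated by the change in the exponents $\kappa(d_{1},\dots,d_{r})$; this is asserted, not proved, and it is not how the statement is actually established -- in the paper (i) is obtained \emph{only} as a corollary of (ii).

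For (ii) itself there is a genuine gap: your key claim that on an $nm$-dominant configuration the sectors ``do not interact under the straightening rules'' and that the bar involution is ``multiplicative over sectors'' is false. Cross-sector straightening does move beads between sectors (see Example \ref{ex3}(ii), and the example following Lemma \ref{lemma4}, where terms such as $u_{-3}^{(2)}\wedge u_{2}^{(1)}\wedge u_{0}^{(1)}\wedge\cdots$ appear with nonzero coefficients), and the bar involution reverses the \emph{whole} wedge, so re-straightening mixes the sectors $<j$ with those $\ge j$; one cannot simply ``apply Proposition \ref{barcommute} inside the truncated Fock space'' for $j\ge 2$ (only $j=1$ is immediate, via Propositions \ref{uglov5} and \ref{barcommute}). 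Dominance does not eliminate the interaction; it makes it controlled, and controlling it is the entire content of Section 5: the deletion lemmas (Lemmas \ref{lemma1}--\ref{lemma3}) kill the excess terms, the exchange rule (Lemma \ref{lemma4}) shows that the expansion of $v\wedge u_{t+nm}^{(j)}$ is that of $v\wedge u_{t}^{(j)}$ with $t'\mapsto t'+nm$ up to the factor $q^{2bm}$, and Lemma \ref{corollary4-1} and Corollaries \ref{corollary6}--\ref{corollary7} convert this into bar-commutation of $B_{-m}'[j]$ and then of $B_{-m}[j,\ell]$ by induction over the beads and sectors. You correctly identify this bookkeeping as ``the main obstacle,'' but the proposal supplies no argument for it, so the substantive part of the proposition remains unproved in your plan.
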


We define $V_{k}'[i]$, $V_{k}[i]$, $S_{\lambda}'[i]$, $S_{\lambda}[i]$ in the same fashion in Definition \ref{p-h-s}. 

\begin{definition} 
For $1 \le i \le \ell$, 
we define $V_{k}'[i]$, $V_{k}[i]$, $S_{\lambda}'[i]$, $S_{\lambda}[i]$ as follows. 
\begin{align*}
	V_{m}'[i] = \sum_{| \lambda | = m} \frac{1}{z_{\lambda}} B_{- \lambda}'[i]  \quad , \quad 
	V_{m}[i] = \sum_{| \lambda | = m} \frac{1}{z_{\lambda}} B_{- \lambda}[i]  \quad , \\
	S_{\lambda}'[i] = \sum_{\mu} K_{\mu , \lambda}^{(-1)} V_{\mu}'[i] \quad , \quad 
	S_{\lambda}[i] = \sum_{\mu} K_{\mu , \lambda}^{(-1)} V_{\mu}[i] \quad .
\end{align*}
\end{definition}

{\bf Remarks}. 
(i). From the definition of $B_{-m}'[i]$, 
the operator $V_{m}'[i]$ (resp. $S_{\mu}'[i]$) acts on the $i$-th component of $| \Bf{\lambda} ; \Bf{s} \rangle$ 
in the same way as $V_{m}$ (resp. $S_{\mu}$) in the case of $\ell =1$. 
If $n=2$, for example, 
\begin{align*}
	V_{m}'[1] \, | (\lambda^{(1)},\lambda^{(2)}) ; \Bf{s} \rangle = | (V_{m} \lambda^{(1)},\lambda^{(2)}) ; \Bf{s} \rangle \quad , \quad 
	S_{\mu}'[2] \, | (\lambda^{(1)},\lambda^{(2)}) ; \Bf{s} \rangle = | (\lambda^{(1)} , S_{\mu} \lambda^{(2)}) ; \Bf{s} \rangle .
\end{align*}
In particular, $V_{m}'[i]$ has a combinatorial expression gives in Theorem \ref{LT.th6.7}.

(ii). Since $B_{-m}[\ell] = B_{-m}'[\ell]$, we have $ V_{m}[\ell] = V_{m}'[\ell]$ and $S_{\lambda}[\ell] = S_{\lambda}'[\ell]$. 

\vspace{1em}

The next lemma gives the formula that expresses $S_{\lambda}[i]$ in terms of $S_{\mu}'[i]$ and $S_{\nu}'[i+1]$. 
Therefore we can compute $S_{\lambda}[i] \, u$ from the calculation in the case of $\ell =1$ 
	if $u$ is $n |\lambda| $-dominant. 

\begin{lemma}
For $1 \le j < \ell$, 
\begin{equation}
	S_{\lambda}[j]  = 
	\sum_{\mu, \nu} (-q^{-1})^{| \nu |} \mathrm{LR}^{\lambda}_{\mu \, \nu} \, S_{\mu}'[j] \, S_{\transpose{\nu}}'[j+1] \quad , 
\label{lemmaformula}
\end{equation}
where $\mathrm{LR}^{\lambda}_{\mu \, \nu}$ is the Littlewood-Richardson coefficient and  
$\transpose{\nu}$ means the transpose of the Young diagram $\nu$.
\label{keylemma}
\end{lemma}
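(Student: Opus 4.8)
The plan is to translate the statement entirely into the language of symmetric functions, where $B_{-m}'[i]$ plays the role of the power sum $p_m$ acting on the $i$-th component, $V_m'[i]$ the complete homogeneous symmetric function $h_m$, and $S_\lambda'[i]$ the Schur function $s_\lambda$, and similarly with the "bracketed" operators. The key observation is that, by Definition~\ref{defB}, we have $B_{-m}[j] = B_{-m}'[j] - q^{-m} B_{-m}'[j+1]$ for $j < \ell$, so on the subalgebra generated by these operators, $B_{-m}[j]$ corresponds to the power sum $p_m$ evaluated at the difference of two sets of variables, say $p_m[X_j - q^{-1} X_{j+1}]$ in $\lambda$-ring notation, where $X_j$ is the alphabet for the $j$-th component and the prefactor $q^{-m}$ is absorbed by a scaling of the second alphabet. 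Since $V_m$ and $S_\lambda$ are obtained from the $B_{-m}$ by the \emph{same} universal formulas that define $h_m$ and $s_\lambda$ in terms of the $p_m$, the operator $S_\lambda[j]$ corresponds to the Schur function $s_\lambda[X_j - q^{-1} X_{j+1}]$.

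The heart of the proof is then the classical plethystic expansion of a Schur function at a difference of alphabets: for any two alphabets $Y, Z$,
\begin{equation*}
	s_\lambda[Y - Z] = \sum_{\mu, \nu} (-1)^{|\nu|} \mathrm{LR}^{\lambda}_{\mu\, \transpose{\nu}} \, s_\mu[Y] \, s_\nu[Z],
\end{equation*}
which follows from the dual Cauchy identity together with the Littlewood--Richardson rule (equivalently, from $s_\lambda[Y-Z] = \sum_\nu (-1)^{|\nu|} s_{\lambda/\transpose{\nu}}[Y]\, s_\nu[Z]$ and $s_{\lambda/\transpose{\nu}} = \sum_\mu \mathrm{LR}^{\lambda}_{\mu\, \transpose{\nu}} s_\mu$). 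Applying this with $Y = X_j$ and $Z = q^{-1} X_{j+1}$ gives $s_\nu[q^{-1}X_{j+1}] = q^{-|\nu|} s_\nu[X_{j+1}]$, and substituting the index $\nu \mapsto \transpose{\nu}$ to match the statement yields exactly
\begin{equation*}
	S_\lambda[j] = \sum_{\mu, \nu} (-q^{-1})^{|\nu|}\, \mathrm{LR}^{\lambda}_{\mu\, \nu}\, S_\mu'[j]\, S_{\transpose{\nu}}'[j+1],
\end{equation*}
using $\mathrm{LR}^{\lambda}_{\mu\,\transpose{(\transpose{\nu})}} = \mathrm{LR}^{\lambda}_{\mu\,\nu}$.

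To make this rigorous I would first verify carefully that the assignment $B_{-m}'[i] \leftrightarrow p_m$ (one independent alphabet per component index $i$) extends to a well-defined algebra homomorphism from the ring of symmetric functions in $\ell$ disjoint alphabets into the algebra of operators on $\Bf{F}_q[\Bf{s}]$ spanned by products of the $B_{-m}'[i]$; this is immediate because the $B_{-m}'[i]$ commute among themselves (each acts on a separate tensor factor), so there are no relations to check beyond those already holding among power sums. Under this homomorphism $V_m'[i], S_\lambda'[i]$ are by definition the images of $h_m, s_\lambda$ in the $i$-th alphabet, and $B_{-m}[j], V_m[j], S_\lambda[j]$ (for $j<\ell$) are the images of $p_m, h_m, s_\lambda$ evaluated in the alphabet $X_j - q^{-1}X_{j+1}$ — the last point requires only that the defining combination $B_{-m}'[j] - q^{-m}B_{-m}'[j+1]$ corresponds to $p_m[X_j] - p_m[q^{-1}X_{j+1}] = p_m[X_j - q^{-1}X_{j+1}]$, and that passing from power sums to $h$'s and $s$'s commutes with plethystic substitution. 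Then the lemma is purely a symmetric-function identity.

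**The main obstacle** I anticipate is bookkeeping rather than conceptual: getting the transpose and the sign exponent to land in precisely the form displayed in \eqref{lemmaformula}. The plethystic formula naturally produces $s_{\transpose{\nu}}$ (or equivalently a Littlewood--Richardson coefficient with a transposed index) on the second alphabet, and one must track the substitution $\nu \leftrightarrow \transpose{\nu}$ together with the fact that $|\transpose{\nu}| = |\nu|$ so the factor $(-q^{-1})^{|\nu|}$ is unaffected. One should also confirm the edge convention: the case $j = \ell - 1$ is not special (the formula uses only $S'[j]$ and $S'[j+1]$), and for $j = \ell$ the operator $B_{-m}[\ell] = B_{-m}'[\ell]$ so $S_\lambda[\ell] = S_\lambda'[\ell]$ as already noted, which is why the lemma is only stated for $j < \ell$. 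No subtlety involving the bar involution, $nm$-dominance, or the Fock space structure enters here — that content is confined to Proposition~\ref{mainconj} and is used later; the present lemma is a formal identity valid at the level of the operator algebra.
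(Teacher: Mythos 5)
Your proposal is correct and follows essentially the same route as the paper: interpreting $B_{-m}'[i]\mapsto p_m$ of independent alphabets via a homomorphism, viewing $S_{\lambda}[j]$ as the Schur function evaluated at $X_j - q^{-1}X_{j+1}$, and expanding via the Littlewood--Richardson rule together with the sign/transpose identity on the second alphabet. The paper phrases this through the automorphism $\omega_{-Y}$ applied to the supersymmetric Schur function $s_{\lambda}(X,q^{-1}Y)$, which is exactly your plethystic evaluation $s_{\lambda}[X - q^{-1}Y]$ in different notation, so the two arguments coincide.
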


\begin{proof}
Let $X=(X_{1},X_{2},\cdots), Y=(Y_{1},Y_{2},\cdots)$ be two families of variables. 
Let $\Lambda(X|Y) = \Lambda(X) \otimes \Lambda(Y)$ be the ring consisting of all symmetric function in both $X$ and $Y$. 
Let $\mathcal{B} = \mathbb{Q}[ B_{-m}'[i] \, | \, m \in \mathbb{Z}_{\ge 0} \,,\, i=j,j+1]$ 
	be the ring consisting of all the $\mathbb{Q}$-linear combinations of products of elements $B_{-m}'[j]$ and $B_{-m}'[j+1]$. 
We define the ring homomorphism $\iota \colon \Lambda(X|Y) \rightarrow \mathcal{B}$ by 
$\iota (p_m(X)) = B_{-m}'[j]$ and $\iota (p_m(Y)) = B_{-m}'[j+1]$. 

We define an automorphism $\omega_{-Y}$ on $\Lambda(X|Y)$ as follows. 
$$ 
\omega_{-Y} (p_{m}(X)) = p_{m}(X) \quad , \quad  \omega_{-Y} (p_{m}(Y)) = -p_{m}(Y) \qquad (m=1,2,\cdots).
$$
If we denote $q^{-1}Y$ by $(q^{-1}Y_{1},q^{-1}Y_{2},\cdots)$, then 
$$
\omega_{-Y} (p_m(X,q^{-1}Y)) 
= \omega_{-Y}(p_m(X)+p_m(q^{-1}Y)) 
= \omega_{-Y}(p_m(X))+q^{-m} \omega_{-Y}(p_m(Y)) 
= p_m(X) -q^{-m} p_m(Y)
$$
Hence we have $B_{-m}[j]=\iota ( \omega_{-Y}(p_m(X,q^{-1}Y)) )$, 
	and it follows from Definition \ref{p-h-s} that 
\begin{equation}
S_{\lambda}[j] = \iota ( \omega_{-Y}(s_{\lambda}(X,q^{-1}Y)) ) ,
\label{omega-Y}
\end{equation}
where $s_{\lambda}(X,q^{-1}Y)$ is the supersymmetric Schur function with variable 
	$(X,q^{-1}Y) = (X_{1} , X_{2} , \cdots , q^{-1}Y_{1} , q^{-1}Y_{2} , \cdots) $. 

Now we compute $\omega_{-Y}(s_{\lambda}(X,q^{-1}Y))$. 
Note that  
\begin{align*}
	\omega_{-Y}(h_{k}(Y)) &= \omega_{-Y} \biggl( \sum_{| \lambda | = k} \frac{1}{z_{\lambda}} p_{\lambda}(Y) \biggr) \\
	&= \sum_{| \lambda | = k} \frac{(-1)^{l(\lambda)}}{z_{\lambda}} p_{\lambda}(Y)  \\
	&= (-1)^{k} \sum_{| \lambda | = k} \frac{(-1)^{k-l(\lambda)}}{z_{\lambda}} p_{\lambda}(Y)  \\
	&=(-1)^{k} e_{k}(Y) ,
\end{align*}
where $e_{k}$ is the elementary symmetric function of degree $k$. 
Hence, 
$$ \omega_{-Y}(s_{\nu}(Y)) = (-1)^{| \nu |} s_{\transpose{\nu}}(Y) . $$
From the well-known formula 
$$ s_{\lambda}(X,Y) = \sum_{\mu ,\, \nu} \mathrm{LR}^{\lambda}_{\mu \, \nu} s_{\mu}(X) s_{\nu}(Y) , $$ 
we obtain 
\begin{align*}
	\omega_{-Y} (s_{\lambda}(X,q^{-1}Y)) &= \sum_{\mu ,\, \nu} 
	(-1)^{|\nu|}\mathrm{LR}^{\lambda}_{\mu \, \nu} s_{\mu}(X) s_{\transpose{\nu}}(q^{-1}Y) \\
	&= \sum_{\mu ,\, \nu} (-q^{-1})^{| \nu |}\mathrm{LR}^{\lambda}_{\mu \, \nu} s_{\mu}(X) s_{\transpose{\nu}}(Y) .
\end{align*}
Therefore (\ref{lemmaformula}) follows from  
	(\ref{omega-Y}), $S_{\mu}'[j]= \iota(s_{\mu}(X))$ and $S_{\transpose{\nu}}'[j+1] = \iota(s_{\transpose{\nu}}(Y))$. 
\end{proof}

\begin{theorem}
Let $1 \le j \le \ell$ and $\lambda \in \Pi$. 
If $| \Bf{\mu} ; \Bf{s} \rangle$ is $n | \lambda |$-dominant and $\mu^{(j)}$ is $n$-restricted, 
\begin{equation*}
	S_{\lambda}[j] G^{-}(\Bf{\mu} ; \Bf{s}) 
	= G^{-}((\mu^{(1)} , \cdots , \mu^{(j)} + n \lambda, \cdots , \mu^{(\ell)}) ; \Bf{s} ) .
\end{equation*}
\label{tensorproductformula}
\end{theorem}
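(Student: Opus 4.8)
The statement asserts that the level-$\ell$ operator $S_\lambda[j]$ realizes the Steinberg–Lusztig tensor product in the $j$-th component, provided we are in the dominant regime and $\mu^{(j)}$ is $n$-restricted. The natural strategy is to reduce everything to the level-one theorem (Theorem \ref{LTformula}) by exploiting Lemma \ref{keylemma}, which expresses $S_\lambda[j]$ purely in terms of the level-one-type operators $S_\mu'[j]$ and $S_{\transpose\nu}'[j+1]$. The first step is therefore to establish a bar-invariance statement: if $| \Bf{\mu} ; \Bf{s} \rangle$ is $n|\lambda|$-dominant, then $S_\lambda[j] G^{-}(\Bf{\mu} ; \Bf{s})$ is bar-invariant. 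This follows by assembling Proposition \ref{mainconj}(i): each $B_{-m}[j]$ commutes with the bar involution on $n m$-dominant vectors, hence so does each $V_m[j]$ and each $S_\mu[j]$, and since $G^{-}(\Bf{\mu};\Bf{s})$ is itself bar-invariant, the image $S_\lambda[j]G^{-}(\Bf{\mu};\Bf{s})$ is bar-invariant — once one checks that dominance is preserved under all the intermediate vectors appearing when the products $B_{-\lambda_1}[j]B_{-\lambda_2}[j]\cdots$ act, which is where the hypothesis $n|\lambda|$-dominance (rather than just $n\lambda_1$-dominance) is used.

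**Second step: identify the leading term.** By the characterization of $G^{-}$ in Theorem \ref{U-thm} (the $\overline{\phantom{a}}$-invariant vector congruent to $|\Bf{\nu};\Bf{s}\rangle$ mod $q^{-1}\mathcal{L}^{-}$), it suffices to show two things: (a) $S_\lambda[j]G^{-}(\Bf{\mu};\Bf{s})$ expands, with respect to the standard basis, unitriangularly with leading term $| (\mu^{(1)} , \cdots , \mu^{(j)} + n\lambda, \cdots , \mu^{(\ell)}) ; \Bf{s} \rangle$ and all other coefficients in $q^{-1}\mathbb{Q}[q^{-1}]$; (b) it is bar-invariant (done in step one). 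For (a), I would first reduce to the $q\to\infty$ limit: write $G^{-}(\Bf{\mu};\Bf{s}) = |\Bf{\mu};\Bf{s}\rangle + (\text{lower terms in }q^{-1}\mathcal{L}^{-})$, and observe that in the dominant regime the operators $B_{-m}'[i]$ act "componentwise" exactly as in level one (first Remark after Definition of $S_\lambda[i]$); combined with Lemma \ref{keylemma} this shows $S_\lambda[j]$ acts, modulo $q^{-1}$, on $|\Bf{\mu};\Bf{s}\rangle$ as the level-one operator $S_\lambda$ does on $|\mu^{(j)}\rangle$ in its component — because the $\transpose\nu$-correction terms in \eqref{lemmaformula} all carry a factor $(-q^{-1})^{|\nu|}$ and vanish mod $q^{-1}$ except for $\nu=\emptyset$.

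**Third step: invoke level one.** Now I apply the level-one tensor product theorem, Theorem \ref{LTformula}: since $\mu^{(j)}$ is $n$-restricted, $\widetilde{\mu^{(j)} + n\lambda} = \mu^{(j)}$ and $\check{\mu^{(j)} + n\lambda\vphantom{)}} = \lambda$ (using here that $n$-restrictedness of the "tilde" part makes the decomposition $\mu^{(j)}+n\lambda = \widetilde{(\cdot)} + n\check{(\cdot)}$ unique with these values — this is the role of the $n$-restricted hypothesis), so $S_\lambda G^{-}(\mu^{(j)}) = G^{-}(\mu^{(j)}+n\lambda)$ in level one. Transporting this along the "componentwise" action and using $S_\lambda[j] = S_\lambda'[j] + (\text{terms in } q^{-1}\mathcal{L}^{-})$ on a dominant $G^{-}$ (from step two, again via Lemma \ref{keylemma}), I conclude that $S_\lambda[j]G^{-}(\Bf{\mu};\Bf{s})$ is bar-invariant and congruent to $| (\mu^{(1)} , \cdots , \mu^{(j)} + n\lambda, \cdots , \mu^{(\ell)}) ; \Bf{s} \rangle$ mod $q^{-1}\mathcal{L}^{-}$, hence equals $G^{-}$ of that multipartition by uniqueness.

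**Main obstacle.** The delicate point is not the algebra of symmetric functions but the bookkeeping of \emph{which} vectors remain dominant under iterated application of the bosons, and controlling the lower-order ($q^{-1}$) terms: one must verify that when $S_\lambda'[j]$ (equivalently products $B_{-\lambda_i}'[j]$) acts on $G^{-}(\Bf{\mu};\Bf{s})$, every partial product stays in the dominant range so that Proposition \ref{mainconj} applies at each stage, and that the correction operators $S_{\transpose\nu}'[j+1]$ with $\nu\neq\emptyset$ genuinely contribute only to $q^{-1}\mathcal{L}^{-}$ and do not spoil unitriangularity with respect to $\ge$. This requires a careful estimate comparing $|\Bf{\mu}| + |\lambda|$-type bounds with the dominance gap $s_i - s_{i+1} \ge n|\lambda| + |\Bf{\mu}|$; making this precise, rather than the appeal to Theorem \ref{LTformula}, is the technical heart of the argument.
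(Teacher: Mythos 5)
Your proposal is correct and follows essentially the same route as the paper's proof: bar-invariance of $S_{\lambda}[j]\,G^{-}(\Bf{\mu};\Bf{s})$ from Proposition \ref{mainconj}, reduction of $S_{\lambda}[j]$ to $S_{\lambda}'[j]$ modulo $q^{-1}\mathcal{L}^{-}$ via Lemma \ref{keylemma} together with the fact that the $S_{\nu}'[j+1]$-corrections preserve $\mathcal{L}^{-}$, identification of the leading term by the level-one result, and then uniqueness of $G^{-}$. The only cosmetic differences are that the paper invokes the congruence $S_{\lambda}|\rho\rangle\equiv|\rho+n\lambda\rangle \ \mathrm{mod}\ q^{-1}\mathcal{L}^{-}$ directly from Leclerc--Thibon (Lemma \ref{LTlemma2}) rather than re-deriving it from Theorem \ref{LTformula}, and it treats the dominance bookkeeping you single out as the main obstacle as immediate from Proposition \ref{mainconj}.
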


\begin{proof}

By definition of the basis $G^{-}$, we have to prove that $F=S_{\lambda}[j] G^{-}(\Bf{\mu} ; \Bf{s})$ satisfies 
$$
\overline{F}=F 
\quad \text{ and } \quad 
F \equiv | (\mu^{(1)} , \cdots , \mu^{(i)} + n \lambda, \cdots , \mu^{(\ell)}) ; \Bf{s} \rangle 
	\quad \mathrm{mod} \,\, q^{-1} \mathcal{L}^{-} \quad .
$$
The first property is clear by Proposition \ref{mainconj}. 
Indeed, 
$S_{\lambda}[j]$ is a $\mathbb{Q}$-linear combination of products of elements $B_{-m}[j]$. 
To prove second property, we observe that by Theorem \ref{LT.th6.7} for all $\Bf{\rho} \in \Pi^{\ell}$ and $m \in \mathbb{N}$, 
	$V_{m}'[j] \, | \Bf{\rho} ; \Bf{s} \rangle \in \mathcal{L}^{-}$. 
Thus, $S_{\lambda}'[j] \, | \Bf{\rho} ; \Bf{s} \rangle \in \mathcal{L}^{-}$ since 
	$S_{\lambda}'[j]$ is a $\mathbb{Q}$-linear combination of products of elements $V_{m}'[j]$.

Note also the following lemma. 

\begin{lemma}[\cite{LT} Proof of Theorem 6.9]
Let $\ell =1$. 
Let $\lambda , \rho$ be two partitions such that $\rho$ is $n$-restricted. 
Then, 
$$
S_{\lambda} \, | \rho \rangle \equiv | \rho + n \lambda \rangle \quad \mathrm{mod} \quad q^{-1} \mathcal{L}^{-}.
$$
\label{LTlemma2}
\end{lemma}

From the lemma, we obtain 
\begin{align*}
	F=S_{\lambda}[j] \, G^{-}(\Bf{\mu} ; \Bf{s}) 
	&\equiv \left( \sum_{\rho, \kappa} (-q^{-1})^{| \kappa |} \mathrm{LR}^{\lambda}_{\rho \, \kappa} \, 
		S_{\rho}'[j] \, S_{\transpose{\kappa}}'[j+1] \right) \, G^{-}(\Bf{\mu} ; \Bf{s}) 
		\hspace{3em} (\text{By lemma \ref{keylemma}}) \\
	&\equiv S_{\lambda}'[j] \, G^{-}(\Bf{\mu} ; \Bf{s})  
		\hspace{3em} ( \text{ By } \, S_{\transpose{\kappa}}'[j+1] \, G^{-}(\Bf{\mu} ; \Bf{s}) \in \mathcal{L}^{-} )  \\
	&\equiv S_{\lambda}'[j] \, | \Bf{\mu} ; \Bf{s} \rangle 
		\hspace{3em} ( \text{ By } \, G^{-}(\Bf{\mu} ; \Bf{s}) \equiv | \Bf{\mu} ; \Bf{s} \rangle )  \\
	& \equiv | (\mu^{(1)} , \cdots , \mu^{(i)} + n \lambda, \cdots , \mu^{(\ell)}) ; \Bf{s} \rangle 
		\hspace{3em} (\text{By lemma \ref{LTlemma2}}) .
\end{align*}

\end{proof}

\begin{definition}
For $\boldsymbol{\lambda} = (\lambda^{(1)} , \lambda^{(2)} , \cdots , \lambda^{(\ell)}) \in \Pi^{\ell}$, 
we define $\widetilde{\Bf{\lambda}}$, $\check{\Bf{\lambda}} \in \Pi^{\ell}$ by 
\begin{align*}
	\widetilde{\Bf{\lambda}} = (\widetilde{\lambda^{(1)}} , \widetilde{\lambda^{(2)}} , \cdots , \widetilde{\lambda^{(\ell)}}) 
	\quad , \quad 
	\check{\Bf{\lambda}} = (\check{\lambda^{(1)}} , \check{\lambda^{(2)}} , \cdots , \check{\lambda^{(\ell)}}) 
	\quad , 
\end{align*}
where $\lambda^{(i)} = \widetilde{\lambda^{(i)}} + n \check{\lambda^{(i)}}$ and 
	$\widetilde{\lambda^{(i)}}$ is $n$-restricted. (See Definition \ref{tilde-check}.) 
And we define 
$$
S_{\boldsymbol{\lambda}} = \prod_{i=1}^{\ell} S_{\lambda^{(i)}}[i] 
= S_{\lambda^{(1)}}[1] S_{\lambda^{(2)}}[2] \cdots S_{\lambda^{(\ell)}}[\ell] .
$$
\label{def.4.11}
\end{definition}

Now we can state our main result, 
which is a higher level version of the $q$-analogue of Theorem \ref{LTformula}. 

\begin{theorem}
If $| \Bf{\lambda} ; \Bf{s} \rangle$ is $0$-dominant, then 
\begin{equation*}
	G^{-}(\Bf{\lambda} ; \Bf{s}) 
	= S_{ \check{\Bf{\lambda}} } G^{-} (\widetilde{ \Bf{\lambda} } ; \Bf{s}) .
\end{equation*}
\label{maincor}
\end{theorem}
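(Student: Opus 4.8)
The plan is to reduce Theorem \ref{maincor} to an iterated application of Theorem \ref{tensorproductformula}, peeling off one component of the multi charge at a time from the last index down to the first. Write $\Bf{\lambda}^{[j]} = (\widetilde{\lambda^{(1)}}, \ldots, \widetilde{\lambda^{(j)}}, \lambda^{(j+1)}, \ldots, \lambda^{(\ell)})$, so that $\Bf{\lambda}^{[0]} = \Bf{\lambda}$ and $\Bf{\lambda}^{[\ell]} = \widetilde{\Bf{\lambda}}$. The key identity to establish is that for each $j$ with $1 \le j \le \ell$,
\begin{equation*}
	G^{-}(\Bf{\lambda}^{[j-1]} ; \Bf{s}) = S_{\check{\lambda}^{(j)}}[j] \, G^{-}(\Bf{\lambda}^{[j]} ; \Bf{s}),
\end{equation*}
where I apply $S_{\check{\lambda}^{(j)}}[j]$ to the $j$-th component $\widetilde{\lambda^{(j)}}$, which is $n$-restricted, producing $\widetilde{\lambda^{(j)}} + n\check{\lambda}^{(j)} = \lambda^{(j)}$. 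Iterating from $j=\ell$ down to $j=1$ and invoking the commutativity of the operators $S_{\lambda^{(i)}}[i]$ acting on distinct components (built into Definition \ref{def.4.11}) gives $G^{-}(\Bf{\lambda};\Bf{s}) = S_{\check{\lambda}^{(1)}}[1] \cdots S_{\check{\lambda}^{(\ell)}}[\ell] \, G^{-}(\widetilde{\Bf{\lambda}};\Bf{s}) = S_{\check{\Bf{\lambda}}} G^{-}(\widetilde{\Bf{\lambda}};\Bf{s})$.

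To apply Theorem \ref{tensorproductformula} at step $j$, I need two hypotheses: that the $j$-th component of $\Bf{\lambda}^{[j]}$ is $n$-restricted (true by construction, since it is $\widetilde{\lambda^{(j)}}$), and that $|\Bf{\lambda}^{[j]};\Bf{s}\rangle$ is $n|\check{\lambda}^{(j)}|$-dominant. The latter is where the hypothesis that $|\Bf{\lambda};\Bf{s}\rangle$ is $0$-dominant gets used. First I would observe that $|\Bf{\lambda}^{[j]}| \le |\Bf{\lambda}|$, since replacing any $\lambda^{(i)}$ by $\widetilde{\lambda^{(i)}}$ only removes cells; more precisely $|\Bf{\lambda}| = |\widetilde{\Bf{\lambda}}| + n|\check{\Bf{\lambda}}| \ge |\widetilde{\Bf{\lambda}}| + n|\check{\lambda}^{(j)}|$, and at intermediate stages $|\Bf{\lambda}^{[j]}|$ interpolates between these. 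Then $0$-dominance of $|\Bf{\lambda};\Bf{s}\rangle$ gives $s_i - s_{i+1} \ge |\Bf{\lambda}| \ge n|\check{\lambda}^{(j)}| + |\Bf{\lambda}^{[j]}|$ for all $i$, which is exactly $n|\check{\lambda}^{(j)}|$-dominance of $|\Bf{\lambda}^{[j]};\Bf{s}\rangle$. (Note that the charges $\Bf{s}$ themselves are never altered by the operations $S_\lambda[i]$, so the dominance-type inequalities only need to be checked against the shrinking quantities $|\Bf{\lambda}^{[j]}|$.)

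The main obstacle I anticipate is bookkeeping rather than a conceptual gap: I must be careful that Theorem \ref{tensorproductformula} is stated for a \emph{single} application of $S_\lambda[j]$ to a canonical basis vector whose $j$-th component is $n$-restricted, and after the first application (say at $j=\ell$) the resulting vector $G^{-}(\Bf{\lambda}^{[\ell-1]};\Bf{s})$ has $n$-restricted components in positions $1, \ldots, \ell-1$ but a possibly non-restricted component $\lambda^{(\ell)}$ in position $\ell$ — which is fine, because at step $j=\ell-1$ I only require the $(\ell-1)$-st component to be $n$-restricted. So the induction must be organized so that at step $j$ I apply Theorem \ref{tensorproductformula} with the partition $\lambda = \check{\lambda}^{(j)}$, the index $j$, and the tuple $\Bf{\mu} = \Bf{\lambda}^{[j]}$, whose $j$-th component $\widetilde{\lambda^{(j)}}$ is indeed $n$-restricted and whose other components are irrelevant to that hypothesis. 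I would also double-check the degenerate cases: if some $\check{\lambda}^{(j)} = \emptyset$ then $S_{\check{\lambda}^{(j)}}[j] = S_\emptyset[j]$ is the identity operator (as $S_\emptyset = 1$), so that step is vacuous and causes no trouble; and the case $\ell=1$ recovers Theorem \ref{LTformula}. Finally I would remark that the $0$-dominance hypothesis is used only to guarantee the $n|\check{\lambda}^{(j)}|$-dominance needed at every stage simultaneously, and that a slightly weaker hypothesis would suffice, but $0$-dominance is the clean uniform condition.
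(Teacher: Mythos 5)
Your proposal is correct and follows essentially the same route as the paper: the paper also proves the theorem by iterating Theorem \ref{tensorproductformula} over the intermediate tuples $(\widetilde{\lambda^{(1)}},\ldots,\widetilde{\lambda^{(j)}},\lambda^{(j+1)},\ldots,\lambda^{(\ell)})$, verifying exactly your inequality $s_i - s_{i+1} \ge |\Bf{\lambda}| \ge n|\check{\lambda^{(j)}}| + |\Bf{\lambda}^{[j]}|$ to get the required $n|\check{\lambda^{(j)}}|$-dominance at each stage. (Your appeal to commutativity of the $S_{\lambda^{(i)}}[i]$ is unnecessary, since iterating from $j=\ell$ down to $j=1$ already produces the operators in the order prescribed by Definition \ref{def.4.11}.)
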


\begin{proof}
Note that for any partition $\mu$, $\mu = n \check{\mu} + \widetilde{\mu}$. 
Since $| \Bf{\lambda} ; \Bf{s} \rangle$ is $0$-dominant, for any $1 \le i < \ell$ and $1 \le j \le \ell$, 
\begin{align*}
	s_{i} - s_{i+1} 
	&\ge | \Bf{\lambda} | = \sum_{k=1}^{\ell} | \lambda^{(k)} | 
	\ge \sum_{k=1}^{j-1} | \widetilde{\lambda^{(k)}} | + | \lambda^{(j)} | + \sum_{k=j+1}^{\ell} | \lambda^{(k)} | 
	= n | \check{\lambda^{(j)}} | + \sum_{k=1}^{j} | \widetilde{\lambda^{(k)}} | + \sum_{k=j+1}^{\ell} | \lambda^{(k)} | .
\end{align*}
Hence $| ( \widetilde{\lambda^{(1)}} , \cdots , \widetilde{\lambda^{(j)}} , \lambda^{(j+1)} , \cdots , \lambda^{(\ell)} ; \Bf{s} ) \rangle$
is $n | \check{\lambda^{(j)}} |$-dominant for any $1 \le j \le \ell$. 
Thus, by applying Theorem \ref{tensorproductformula} repeatedly, 
\begin{align*}
	S_{\check{\Bf{\lambda}}} G^{-}(\widetilde{\Bf{\lambda}} ; \Bf{s}) 
	&= S_{\check{\lambda^{(1)}}}[1] \cdots S_{\check{\lambda^{(\ell-1)}}}[\ell-1] S_{\check{\lambda^{(\ell)}}}[\ell] 
		G^{-}((\widetilde{\lambda^{(1)}} , \cdots , \widetilde{\lambda^{(\ell-1)}} , \widetilde{\lambda^{(\ell)}} ) ; \Bf{s}) \\
	&= S_{\check{\lambda^{(1)}}}[1] \cdots S_{\check{\lambda^{(\ell-1)}}}[\ell-1] 
		G^{-}((\widetilde{\lambda^{(1)}} , \cdots , \widetilde{\lambda^{(\ell-1)}} , \lambda^{(\ell)} ) ; \Bf{s}) \\
	&= \cdots 
	= G^{-}((\lambda^{(1)} , \cdots , \lambda^{(\ell-1)} , \lambda^{(\ell)} ) ; \Bf{s}) 
	= G^{-}(\Bf{\lambda} ; \Bf{s}).
\end{align*}
\end{proof}

\begin{example}
In this example, we write $|\Bf{\lambda} ; \Bf{s} \rangle$ simply $\Bf{\lambda}$.

(i) Let $n=l=2$, $\Bf{\lambda} = ((2,2),\emptyset)$ and $\boldsymbol{s} = (2,-2)$. 
Then, $|\Bf{\lambda} ; \Bf{s} \rangle$ is $0$-dominant and  
$(2,2) = \emptyset + n (1,1)$. 
\begin{align*}
G^{-}((2,2) , \emptyset) =& S_{((1,1),\emptyset )} \, (\emptyset , \emptyset) \\
=& S_{(1,1)}[1] \, (\emptyset , \emptyset) \\
=& (S_{(1,1)}'[1] -q^{-1} S_{(1)}'[1] S_{(1)}'[2] +q^{-2} S_{(2)}'[2] ) \, (\emptyset , \emptyset)  \hspace{4em} (Lemma \ref{keylemma}) \\
=& (S_{(1,1)} \emptyset , \emptyset) - q^{-1} (S_{(1)} \emptyset , S_{(1)} \emptyset) + q^{-2} (\emptyset, S_{(2)} \emptyset) \\
=&((2,2) - q^{-1} (2,1,1) + q^{-2} (1^{4}) , \emptyset) - q^{-1} ((2) - q^{-1} (1,1) , (2) - q^{-1} (1,1)) \\
& + q^{-2} (\emptyset , (4) - q^{-1} (3,1) + q^{-2}(2,2)) \\
=& ((2,2),\emptyset) - q^{-1} ((2,1,1) , \emptyset) + q^{-2} ( (1^{4}) ,\emptyset) -q^{-1} ((2) , (2)) +q^{-2} ((2) , (1,1)) + q^{-2} ((1,1),(2)) \\
& -q^{-3} ((1,1),(1,1)) +q^{-2}(\emptyset, (4)) -q^{-3} (\emptyset , (3,1)) +q^{-4} (\emptyset , (2,2)) .
\end{align*}

(ii) Let $n=l=2$ and $\boldsymbol{s} = (3,-3)$. 
Then, $((2),(2,2))$ is $0$-dominant, $(2) = \emptyset + n (1)$ and $(2,2) = \emptyset + n (1,1)$. 
\begin{align*}
&G^{-}((2) , (2,2)) \\
=& S_{((1),(1,1) )} \, (\emptyset , \emptyset) \\
=& S_{(1)}[1] \, S_{(1,1)}[2] \, (\emptyset , \emptyset) \\
=& (S_{(1)}'[1] -q^{-1} S_{(1)}'[2] ) \, S_{(1,1)}'[2] (\emptyset , \emptyset)  \hspace{4em} (Lemma.\ref{keylemma}) \\
=& (S_{(1)}'[1] -q^{-1} S_{(1)}'[2] ) \, (\emptyset , S_{(1,1)}\emptyset) \\
=& (S_{(1)} \emptyset , S_{(1,1)}\emptyset) -q^{-1} (\emptyset , S_{(1)} S_{(1,1)}\emptyset) \\
=& (S_{(1)} \emptyset , S_{(1,1)}\emptyset) -q^{-1} (\emptyset , S_{(2,1)}\emptyset) -q^{-1} (\emptyset , S_{(1^{3})}\emptyset) \\
=& ((2) - q^{-1} , (2,2) - q^{-1} (2,1,1) + q^{-2} (1^{4}) )  \\
& -q^{-1} (\emptyset , (4,2) -q^{-1}(4,1,1) -q^{-1} (3,3) + q^{-2} (3,1^{3}) -q^{3} (2^{3}) + q^{-4} (2^{2},1^{2}) ) \\
& -q^{-1} (\emptyset , (2^{3}) -q^{-1} (2^{2},1^{2}) +q^{-2}(2,1^{4}) -q^{-3}(1^{6})) \\
=& ((2) , (2,2)) -q^{-1} ((2) , (2,1,1)) +q^{-2} ((2) , (1^{4})) -q^{-1} ((1,1) , (2,2)) +q^{-2} ((1,1) , (2,1,1)) -q^{-3} ((1,1) , (1^{4})) \\
& -q^{-1} (\emptyset , (4,2)) + q^{-2} (\emptyset , (4,1,1)) + q^{-2} (\emptyset , (3,3)) - q^{-3} (\emptyset , (3,1^{3})) 
-(q^{-1} + q^{-3}) (\emptyset , (2^{3})) \\ 
& + (q^{-2} + q^{-4}) (\emptyset , (2^{2} , 1^{2})) -q^{-3}( \emptyset , (2,1^{4})) + q^{-4} ( \emptyset , (1^{6}))  .
\end{align*}

\end{example}

%
%

\section{Proof of Proposition \ref{mainconj}}

We prove Proposition \ref{mainconj} by using (infinite) $q$-wedge product. 
Fix a sufficiently large integer $r$ so that for every ordered $q$-wedge product appearing in our argument, 
	all of the components after $r$-th factor are consecutive.
We are able to truncate $q$-wedge products at the first $r$ parts.
See \cite[\S 4]{U} for detail.

\subsection{$q$-wedges and straightening rules}

In this section, we review the straightening rules \cite{U} to prove our main results.

\begin{proposition}[\cite{I} Proposition 4.2 ,see also \cite{U} Proposition 3.16]
For unequal integers $k_{1},k_{2}$, 
	let $c_{j},d_{j},m_{j}$ be the unique integers satisfying $k_{j} = c_{j} + n (d_{j} - 1) - n \ell m_{j}$, 
$1 \le c_{j} \le n$ and $1 \le d_{j} \le \ell$, $(j=1,2)$.
Then, 

\begin{align}
u_{c_{2} - n m_{2}}^{(d_{2})} \wedge u_{c_{1} - n m_{1}}^{(d_{1})}  
	&= (-q^{-1})^{\delta_{d_{1} = d_{2} }} \,
	q^{\alpha} \, u_{c_{1} - n m_{1}}^{(d_{1})} \wedge u_{c_{2} - c m_{2}}^{(d_{2})}  
	\nonumber \\
	&+ \mathrm{sgn}(m) \,
	(-q^{-1})^{\delta_{d_{1} = d_{2} }} \, (q - q^{-1}) \sum_{j= \beta }^{ | m_{1} - m_{2} |  - \gamma} 
	u_{c_{2} - n m_{1} - \mathrm{sgn}(m) n j}^{(d_{1})} \wedge u_{c_{1} - n m_{2} + \mathrm{sgn} (m) n j}^{(d_{2})}.
\label{eq.22}
\end{align}
where 
$$\mathrm{sgn}(m) = 
\begin{cases}
1 & \text{ if } \,\, m_{1} < m_{2} \\
-1 & \text{ if } \,\, m_{1} > m_{2} \\
0 & \text{ if } \,\, m_{1} = m_{2} 
\end{cases}  \quad , \quad    
\alpha =
\begin{cases}
1 & \text{ if } \,\, c_{1} = c_{2} \text{ and } k_{1} > k_{2} \\
-1 & \text{ if } \,\, c_{1} = c_{2} \text{ and } k_{1} < k_{2} \\
0 & \text{ if } \,\, c_{1} \not= c_{2}
\end{cases} \quad , $$ 
$$ \delta_{d_{1} = d_{2}}  =
\begin{cases}
1 & \text{ if } d_{1} = d_{2} \\
0 & \text{ if } d_{1} \not= d_{2}
\end{cases} \quad , \quad 
\beta =
\begin{cases}
0 & \text{ if } \,\, c_{1} > c_{2} , m_{1} < m_{2}  \text{ or } c_{1} < c_{2} , m_{1} > m_{2}   \\
1 & \text{ if } \,\, otherwise
\end{cases} \quad , $$ 
and 
$$ \gamma =
\begin{cases}
1 & \text{ if } \,\, d_{1} < d_{2} , m_{1} < m_{2}  \text{ or } d_{1} > d_{2} , m_{1} > m_{2}   \\
0 & \text{ if } \,\, d_{1} > d_{2} , m_{1} < m_{2}  \text{ or } d_{1} < d_{2} , m_{1} > m_{2}   
\end{cases}. $$
\label{s.rule}
\end{proposition}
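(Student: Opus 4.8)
The plan is to derive the two-fold straightening relation from Uglov's construction of the $q$-wedge space together with the explicit form of the affine $R$-matrices, and then to transfer it to $\Lambda^{s}$ using the truncation fixed at the start of this section. Recall from \cite{U} that $\Lambda^{s}$ is a semi-infinite $q$-wedge product built from $V = \mathbb{Q}(q)^{n} \otimes \mathbb{Q}(q)^{\ell} \otimes \mathbb{Q}(q)[z,z^{-1}]$, under the identification $u_{k} = v_{c} \otimes w_{d} \otimes z^{-m}$ for $k = c + n(d-1) - n\ell m$ with $1 \le c \le n$ and $1 \le d \le \ell$. On $V \otimes V$ there is a normalized $R$-matrix $\check{R}(z_{1}/z_{2})$ which, up to normalization, is the product $\check{R}_{n}(z_{1}/z_{2}) \, \check{R}_{\ell}(z_{1}/z_{2})$ of the vector-representation $R$-matrix of $U_{q}(\widehat{\mathfrak{sl}}_{n})$ on the $\mathbb{Q}(q)^{n}\otimes\mathbb{Q}(q)[z,z^{-1}]$-factor and that of $U_{-q^{-1}}(\widehat{\mathfrak{sl}}_{\ell})$ on the $\mathbb{Q}(q)^{\ell}\otimes\mathbb{Q}(q)[z,z^{-1}]$-factor. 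By the definition of the $q$-wedge, the ordered wedges $u_{k'} \wedge u_{k}$ ($k' > k$) form a basis, and for $k_{1} > k_{2}$ the non-ordered wedge $u_{k_{2}} \wedge u_{k_{1}}$ is, up to a sign, the result of applying $\check{R}$ to $u_{k_{2}} \otimes u_{k_{1}}$ and re-expanding in that basis; so the whole statement reduces to computing this $\check{R}$-action on each pair of basis vectors.

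I would then carry out that computation. Each factor $\check{R}_{n}$, $\check{R}_{\ell}$ is Jimbo's affine vector $R$-matrix, of the following shape: a diagonal coefficient on $v_{b}\otimes v_{a}$ that is a monomial in $q$ depending only on whether $a=b$, $a<b$ or $a>b$ and on the spectral parameter, plus, when $a \ne b$, an off-diagonal term equal to $(q-q^{-1})$ times a geometric series in the spectral parameter landing on $v_{a}\otimes v_{b}$ and shifting the $z$-degree. Multiplying the two factors, the ``diagonal times diagonal'' contribution yields the coefficient $(-q^{-1})^{\delta_{d_{1}=d_{2}}}\,q^{\alpha}$ of $u_{k_{1}} \wedge u_{k_{2}}$, the two monomials being governed respectively by whether $d_{1}=d_{2}$ (the factor with parameter $-q^{-1}$) and whether $c_{1}=c_{2}$ (with the sign of $\alpha$ recording which of $k_{1},k_{2}$ is larger when $c_{1}=c_{2}$). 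The remaining terms, in which at least one factor acts off-diagonally, assemble — after multiplying out the geometric series in $z_{1}/z_{2}$ and using that in a semi-infinite wedge the series terminates — into $\mathrm{sgn}(m)\,(-q^{-1})^{\delta_{d_{1}=d_{2}}}\,(q-q^{-1}) \sum_{j} u_{c_{2}-nm_{1}-\mathrm{sgn}(m)nj}^{(d_{1})} \wedge u_{c_{1}-nm_{2}+\mathrm{sgn}(m)nj}^{(d_{2})}$, the substitution $m \mapsto m \pm j$ being exactly the $z$-degree shift of the off-diagonal terms, and normal-ordering constrains $j$ to an interval $[\beta,\, |m_{1}-m_{2}|-\gamma]$.

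Next I would run the (finite) case analysis according to whether $c_{1}=c_{2}$, whether $d_{1}=d_{2}$, and the order of $m_{1}$ and $m_{2}$ — when $m_{1}=m_{2}$ one has $\mathrm{sgn}(m)=0$, the sum is empty, and the relation collapses to the purely finite-dimensional Hecke relation — checking in each case that the $R$-matrix output matches the stated values of $\alpha$, $\beta$, $\gamma$, $\mathrm{sgn}(m)$ and $\delta_{d_{1}=d_{2}}$. Finally I would pass from this two-fold identity on $V\otimes V$ to the identity in $\Lambda^{s}$: by the truncation fixed at the start of this section every wedge in play is a finite wedge of length $r$, so applying the two-fold relation in positions $(i,i+1)$ together with associativity of $\wedge$ produces the claimed identity for $u_{\boldsymbol{k}}$.

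The hard part will be the combinatorial bookkeeping in the last two steps: verifying that the a priori infinite geometric series in $z_{1}/z_{2}$ truncates to exactly $[\beta,\, |m_{1}-m_{2}|-\gamma]$ — which is where the piecewise definitions of $\beta$ and $\gamma$ come from, since one must decide, configuration by configuration, whether the boundary values of $j$ give a genuine ordered basis wedge, reproduce the left-hand side, or hit a degenerate equal-index wedge that vanishes — and keeping all signs consistent across the $\widehat{\mathfrak{sl}}_{n}$- and $\widehat{\mathfrak{sl}}_{\ell}$-matrices at once, where the choice of parameter $-q^{-1}$ for the $\widehat{\mathfrak{sl}}_{\ell}$-factor is precisely what makes $\check{R}^{2}=\mathrm{id}$ on the relevant subspace and hence the $q$-wedge well defined. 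Since this bookkeeping is already done in \cite[Proposition 3.16]{U} and \cite[Proposition 4.2]{I}, I would reproduce only the two-variable computation and cite those references for the complete case check.
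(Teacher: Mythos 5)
Your proposal is consistent with the paper: the paper gives no proof of Proposition \ref{s.rule} at all, importing it from \cite{I} (Proposition 4.2), which is a reformulation in the $u^{(d)}_{c-nm}$ notation of \cite{U} (Proposition 3.16), and your sketch — deriving the rule from Uglov's affine Hecke algebra/$R$-matrix construction of the $q$-wedge with the $U_{q}(\widehat{\mathfrak{sl}}_{n})$ and $U_{-q^{-1}}(\widehat{\mathfrak{sl}}_{\ell})$ factors and then running the case analysis for $\alpha,\beta,\gamma$ — is exactly the route taken in those references, to which you also ultimately defer the bookkeeping, just as the paper does. One minor imprecision: the truncation of the sum to $\beta\le j\le |m_{1}-m_{2}|-\gamma$ is a purely two-site fact (the intermediate indices must lie weakly between $k_{2}$ and $k_{1}$, cf.\ Corollary \ref{property1}(ii)), not a consequence of working in a semi-infinite wedge as your phrasing suggests.
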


{\bf Remarks.} 
Note that the identity (\ref{eq.22}) depends only on the inequality relationship between $d_{1}$ and $d_{2}$
($c_{1}$ and $c_{2}$). 
It is independent of $\ell$.

\begin{corollary}
Suppose $u_{k_1}^{(d_1)} \wedge u_{k_2}^{(d_2)}$ is expressed by straightening rule as 
$$
u_{k_1}^{(d_1)} \wedge u_{k_2}^{(d_2)} = \sum_{g_1,g_2} \alpha(g_1,g_2) u_{g_2}^{(d_2)} \wedge u_{g_1}^{(d_1)} \quad ,
$$
where $\alpha(g_1,g_2) \in \mathbb{Q}(q)$. 
Then, 

(i) $k_1+k_2 = g_1+g_2$.

(ii) If $k_1<k_2$, then $k_1 \le g_1 \le k_2 $ and $k_1 \le g_2 \le k_2 $. 
If $k_1>k_2$, then $k_1 \ge g_1 \ge k_2 $ and $k_1 \ge g_2 \ge k_2 $. 

(iii) Let $k_i = c_i - nm_i$ and $g_i = c'_i - nm'_i$ ($i=1,2$) where $c_i,c'_i$ and $m_i,m'_i$ are defined 
in Definition \ref{notation}. 
Then, $\{ c_1 , c_2 \} = \{ c'_1 , c'_2 \}$. 
\label{property1}
\end{corollary}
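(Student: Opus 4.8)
The plan is to read all three assertions directly off the explicit two-term straightening rule~(\ref{eq.22}) of Proposition~\ref{s.rule}. Recall that the standard-basis expansion of a length-two wedge is produced by applying~(\ref{eq.22}) to the two factors whenever they are out of order and then repeating this on any resulting term that is still not ordered; since, as will be seen, all indices produced stay inside the finite interval bounded by $k_{1}$ and $k_{2}$, this terminates. So the first step is to establish three ``invariance'' properties of a \emph{single} application of~(\ref{eq.22}) to a length-two wedge $u^{(e)}_{a}\wedge u^{(f)}_{b}$, where I write $a=c_{a}-nm_{a}$, $b=c_{b}-nm_{b}$ with $1\le c_{a},c_{b}\le n$ as in Definition~\ref{notation}: every monomial $u^{(\bullet)}_{h}\wedge u^{(\bullet)}_{h'}$ appearing on the right-hand side satisfies $h+h'=a+b$; both $h$ and $h'$ lie in the closed interval with endpoints $a$ and $b$; and the residues of $h,h'$ modulo $n$ form the same multiset as those of $a,b$. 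Granting this, (i)--(iii) follow by an induction on the number of straightening steps, because sums, residue multisets, and ``containment in $[\min(k_{1},k_{2}),\max(k_{1},k_{2})]$'' are each preserved from one step to the next.

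Matching~(\ref{eq.22}) to $u^{(e)}_{a}\wedge u^{(f)}_{b}$ (so that $\{a,b\}=\{k_{1},k_{2}\}$ and, in the notation of Proposition~\ref{s.rule}, $\{c_{a},c_{b}\}=\{c_{1},c_{2}\}$), the sum and residue claims are immediate from the shape of the formula: the ``diagonal'' term merely transposes the two factors, and each summand
\begin{equation*}
u^{(d_{1})}_{c_{2}-nm_{1}-\mathrm{sgn}(m)nj}\wedge u^{(d_{2})}_{c_{1}-nm_{2}+\mathrm{sgn}(m)nj}
\end{equation*}
has its two indices summing to $(c_{1}+c_{2})-n(m_{1}+m_{2})=a+b$ and congruent modulo $n$ to $c_{2}$ and $c_{1}$ respectively. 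The only real work is the interval claim, and only for the summand terms. If $m_{1}=m_{2}$ then $\mathrm{sgn}(m)=0$ and there are no summands; otherwise, by the symmetry $a\leftrightarrow b$ I may assume $m_{1}<m_{2}$, so $\mathrm{sgn}(m)=1$, $m_{2}-m_{1}\ge1$, and (using $1\le c_{i}\le n$) $a=c_{2}-nm_{2}<c_{1}-nm_{1}=b$. For $\beta\le j\le(m_{2}-m_{1})-\gamma$ one then checks $a\le c_{2}-nm_{1}-nj\le b$ and $a\le c_{1}-nm_{2}+nj\le b$: the extreme values occur at the two ends of the $j$-range, and a short case analysis over the binary parameters $\beta,\gamma$ (whose definitions in Proposition~\ref{s.rule} are exactly what prevents the endpoints $a,b$ from being overshot) together with $1\le c_{i}\le n$ finishes it.

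I expect the interval estimate in the previous paragraph to be the only delicate point: one must run through the cases distinguished by $\mathrm{sgn}(m)$ and the parameters $\beta,\gamma$ in~(\ref{eq.22}) and confirm that the prescribed range of $j$ keeps both new indices between $a$ and $b$. Everything else---claims (i) and (iii) for a single step, and the passage from a single step to the full straightening---is then formal.
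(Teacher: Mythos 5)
Your verification is correct and is essentially what the paper intends: the corollary is stated without proof precisely because all three claims are read off directly from the explicit formula (\ref{eq.22}) of Proposition \ref{s.rule}, with the only point needing care being the range $\beta \le j \le |m_1-m_2|-\gamma$ of the correction terms, exactly as you identify. Note only that the induction on straightening steps is superfluous: a single application of (\ref{eq.22}) to the two-factor wedge already produces the displayed form $\sum \alpha(g_1,g_2)\, u_{g_2}^{(d_2)} \wedge u_{g_1}^{(d_1)}$ (and in fact ordered terms), so the statement concerns one application of the rule and no iteration is needed.
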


\begin{corollary}
Let $k,m,g$ be three integers such that $m \ge 0$ and $k \ge g+nm$. 
If $u_{k}^{(d_1)} \wedge u_{g}^{(d_2)}$ and $u_{k}^{(d_1)} \wedge u_{g+nm}^{(d_2)}$ is expressed by straightening rule as 
\begin{align*}
	u_{k}^{(d_1)} \wedge u_{g}^{(d_2)} 
	= \sum_{j=0}^{k-g} C_{j}(q) \, u_{g+j}^{(d_2)} \wedge u_{k-j}^{(d_1)} \quad , \quad 
	u_{k}^{(d_1)} \wedge u_{g+nm}^{(d_2)} 
	= \sum_{j=0}^{k-g-nm} C_{j}'(q) \, u_{g+nm+j}^{(d_2)} \wedge u_{k-j}^{(d_1)} \quad ,
\end{align*}
where $C_{j}(q) , C'_{j}(q) \in \mathbb{Q}(q)$, then 
$C_{j}(q) = C'_{j}(q)$ for all $0 \le j \le k-g-nm$.
\label{property2}
\end{corollary}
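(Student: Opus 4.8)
The plan is to read off the coefficients $C_j(q)$ and $C_j'(q)$ directly from the explicit straightening rule (\ref{eq.22}) of Proposition \ref{s.rule}, and then to compare the two expansions, exploiting that the numerical coefficients produced by (\ref{eq.22}) depend only on $n$, on the residues modulo $n$ of the two subscripts, and on the relative position of the two sectors --- but not on the ``levels'' $m_1,m_2$ of the factors (this insensitivity is the substance of the Remark following Proposition \ref{s.rule} and of Corollary \ref{property1}).

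First I would dispose of $m=0$, which is trivial, and reduce the general case to $m=1$: for $m\ge 2$, apply the case $m=1$ to the consecutive pairs $(g,g+n),(g+n,g+2n),\dots,(g+(m-1)n,g+mn)$ with $k$ fixed --- the hypothesis $k\ge g+nm$ guaranteeing $k\ge (g+in)+n$ at each step --- and chain the resulting identities, using that the range $0\le j\le k-g-nm$ is contained in all the intermediate ranges. So from now on assume $m=1$.

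Then I would expand both $u_k^{(d_1)}\wedge u_g^{(d_2)}$ and $u_k^{(d_1)}\wedge u_{g+n}^{(d_2)}$ by (\ref{eq.22}). Because $g$ and $g+n$ are congruent modulo $n$, the residue $c_1$ and the sector $d_1$ of the second factor are unchanged and only its level $m_1$ decreases by $1$, the data $c_2,d_2,m_2$ of the first factor staying fixed; from $k\ge g+n$ one gets $m_1>m_2$, so $\mathrm{sgn}(m)=-1$ in both expansions, and one verifies that the data $\delta_{d_1=d_2}$, $\alpha$, $\beta$, $\gamma$ entering (\ref{eq.22}) agree for the two wedges. Hence the two right-hand sides of (\ref{eq.22}) differ only in the length of the correction sum, which is shorter by $1$ for $u_k^{(d_1)}\wedge u_{g+n}^{(d_2)}$. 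A careful re-indexing then identifies the summand of (\ref{eq.22}) carrying internal index $j'$ with a unique running index $j$ of the statement, depending on $c_1,c_2,j'$ alone (hence the same for both wedges) and carrying the coefficient $\mathrm{sgn}(m)(-q^{-1})^{\delta_{d_1=d_2}}(q-q^{-1})$, while the leading term sits at $j=0$ with coefficient $(-q^{-1})^{\delta_{d_1=d_2}}q^{\alpha}$; finally one checks that the cut-off $0\le j\le k-g-n$ selects exactly $j=0$ together with the positions reached by the $j'$ in the common (shorter) range, which yields $C_j(q)=C_j'(q)$ there and closes the induction.

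The main obstacle I anticipate is the bookkeeping in the last step: pairing up the summands of (\ref{eq.22}) for the two wedges, getting the running-index conversion $j\leftrightarrow j'$ right, and verifying that the bound $k-g-nm$ is exactly what makes the two ranges of admissible $j$ coincide. One also has to handle with care the borderline configurations --- for instance when $\alpha$ is governed by the mutual order of the two original indices (here pinned down by $k\ge g+nm$), or when a summand $u_{g+j}^{(d_2)}\wedge u_{k-j}^{(d_1)}$ degenerates --- keeping throughout the interpretation that $C_j(q)$ denotes the coefficient supplied by (\ref{eq.22}), which is precisely the level-insensitive quantity the argument controls.
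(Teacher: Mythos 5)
Your plan --- read the coefficients directly off the explicit rule (\ref{eq.22}) and compare the two expansions --- is the only route available (the paper itself offers no separate argument, treating the statement as immediate from Proposition \ref{s.rule}), but your execution has a genuine gap, and it sits exactly in the bookkeeping you postpone. First, the premise that the data produced by (\ref{eq.22}) are insensitive to the levels overstates the Remark: writing $k=c_k-nM_k$ and $g=c_g-nM_g$, the hypothesis $k\ge g+n$ does \emph{not} give $\mathrm{sgn}(m)=-1$ for both wedges, since for $u_k^{(d_1)}\wedge u_{g+n}^{(d_2)}$ the comparison is between $M_g-1$ and $M_k$, which can be equal (then $\mathrm{sgn}=0$ and the whole correction sum disappears); likewise $\alpha$ can change when $c_k=c_g$ and $k=g+nm$. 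Second, and more seriously, even when $\mathrm{sgn},\delta,\alpha,\beta,\gamma$ do agree, the extra top term of the longer correction sum need not fall outside the cut-off $0\le j\le k-g-nm$: under the re-indexing $j=(c_k-c_g)+nj'$, in the case $\gamma=1$ of (\ref{eq.22}) (which is exactly the orientation $d_1<d_2$ in which the corollary is later applied) and $c_k>c_g$, the top correction term of the $g$-expansion lands precisely at $j=k-g-nm$, with coefficient $\pm(-q^{-1})^{\delta}(q-q^{-1})\neq 0$, while the $(g+nm)$-expansion has no term there.

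Concretely, take $n=\ell=2$, $d_1=1$, $d_2=2$, $k=2$, $g=-1$, $m=1$, so $k-g-nm=1$. Rule (\ref{eq.22}) gives $u_2^{(1)}\wedge u_{-1}^{(2)}=u_{-1}^{(2)}\wedge u_2^{(1)}-(q-q^{-1})\,u_0^{(2)}\wedge u_1^{(1)}$ (this expansion is forced by, and consistent with, the paper's own computation of $w^{(1)}\wedge u_{-2}^{(2)}$ in the example following Lemma \ref{lemma4}), so $C_1=-(q-q^{-1})$; on the other hand $u_2^{(1)}\wedge u_1^{(2)}$ involves two factors of equal level, so $\mathrm{sgn}=0$ and there is no correction term (by Corollary \ref{property1}(ii) none is even possible), giving $C_1'=0$. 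Hence the asserted equality at the boundary index $j=k-g-nm$ cannot be reached by your argument; what the comparison of (\ref{eq.22}) actually yields is $C_j=C_j'$ for $0\le j<k-g-nm$, with the boundary index recoverable only under an additional hypothesis (for instance $k\ge g+nm+n$, or $d_1>d_2$, or a restriction on the residues $c_k,c_g$). So the ``careful re-indexing'' you defer is not merely tedious: as described it cannot be completed, and any correct treatment has to confront (or amend) this boundary case of the statement as printed.
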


\begin{definition}
Let

\begin{align*}
u &= u_{k_{1}}^{(d_{1})} \wedge u_{k_{2}}^{(d_{2})} \wedge \cdots \wedge u_{k_{r}}^{(d_{r})} \,\,\,\, , \,\,\,\,
k_{a} = c_{a} - n m_{a} \,\,\,\,,\,\,\, (a = 1, 2, \cdots , r)  \,\,\,\,  \text{ and } \\
v &= u_{g_{1}}^{(d_{1}')} \wedge u_{g_{2}}^{(d_{2}')} \wedge \cdots \wedge u_{g_{t}}^{(d_{t}')} \,\,\,\, , \,\,\,\,
g_{b} = c_{b}' - n m_{b}' \,\,\,\,,\,\,\, (b = 1, 2, \cdots , t).
\end{align*}
and suppose that $d_{a} \not= d_{b}'$ for all $a \in \{ 1, \cdots , r \}$ and $b \in \{ 1, \cdots ,t \}$. 
Then we define $\xi (u , v )$ as 

\begin{align}
\xi (u , v ) &= 
  \# \{ (a,b) \, | \, c_{a} = c_{b}'  \,\, , \,\, u_{k_{a}}^{(d_{a})} < u_{g_{b}}^{(d_{b}')}  \} .
\end{align}
\label{def-xi}
\end{definition}

\begin{lemma}[\cite{I} Lemma 4.8, (see \cite{U} Lemma 5.19)] 
Let $a \in \mathbb{Z}$, $t \in \mathbb{Z}_{\ge 0}$, $1 \le i \le \ell$, and $1 \le j \le \ell$.

$(i)$. 
Let $u_{k}^{(j)}$ be the maximal element such that $u_{k}^{(j)} < u_{a}^{(i)}$.
Let $u_{[k,k-t]}^{(j)} = u_{k}^{(j)} \wedge u_{k-1}^{(j)} \wedge \cdots \wedge u_{k-t}^{(j)}$.
Then, 

\begin{equation*}
u_{a}^{(i)} \wedge u_{[k,k-t]}^{(j)}
=q^{- \xi (u_{[k,k-t]}^{(j)} , u_{a}^{(i)})} u_{[k,k-t]}^{(j)} \wedge u_{a}^{(i)}.
\end{equation*}

$(ii)$. 
Let $u_{g}^{(j)}$ be the minimal element such that $u_{g}^{(j)} > u_{a}^{(i)}$.
Let $u_{[g+t,g]}^{(j)} = u_{g+t}^{(j)} \wedge u_{g+t-1}^{(j)} \wedge \cdots \wedge u_{g}^{(j)}$.
Then, 

\begin{equation*}
u_{a}^{(i)} \wedge u_{[g+t,g]}^{(j)}
=q^{\xi (u_{a}^{(i)} , u_{[g+t,g]}^{(j)})} u_{[g+t,t]}^{(j)} \wedge u_{a}^{(i)}.
\end{equation*}

\label{lem3}
\end{lemma}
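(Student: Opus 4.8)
The two parts of the lemma are proved in the same way, so I will plan part (i); part (ii) is the variant in which $u_a^{(i)}$ is moved rightward past a consecutive run lying \emph{above} it, the only difference being that the relevant value of $\alpha$ in (\ref{eq.22}) is $+1$ rather than $-1$, which produces the exponent $+\xi$ instead of $-\xi$. Since $\xi$ is defined only between factors of distinct sectors, we may assume $i\ne j$, and then the factor $(-q^{-1})^{\delta_{d_1=d_2}}$ in (\ref{eq.22}) equals $1$ throughout.

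The plan is an induction on $t$, moving $u_a^{(i)}$ past the run $u_{[k,k-t]}^{(j)}=u_k^{(j)}\wedge u_{k-1}^{(j)}\wedge\cdots\wedge u_{k-t}^{(j)}$ one factor at a time by Proposition \ref{s.rule}. At the step that straightens the adjacent pair $u_a^{(i)}\wedge u_{k-s}^{(j)}$ (with $u_k^{(j)},\dots,u_{k-s+1}^{(j)}$ already standing to the left of $u_a^{(i)}$), (\ref{eq.22}) produces a leading term $q^{\alpha_s}\,u_{k-s}^{(j)}\wedge u_a^{(i)}$ together with correction terms in which the pair $u_a^{(i)},u_{k-s}^{(j)}$ is replaced by a sector-$j$ factor $u_p^{(j)}$ and a sector-$i$ factor $u_q^{(i)}$. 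The crucial observation is that all these corrections vanish: by Corollary \ref{property1}(ii)--(iii) the new factor $u_p^{(j)}$ carries the residue of $u_a^{(i)}$ modulo $n$ and occupies an abacus slot lying between $u_{k-s}^{(j)}$ and $u_a^{(i)}$; but since $u_k^{(j)}$ is the \emph{maximal} sector-$j$ slot below $u_a^{(i)}$ and the run is consecutive, every sector-$j$ slot in that window is one of $u_{k-s+1}^{(j)},\dots,u_k^{(j)}$, so $u_p^{(j)}$ either does not exist (the summation range in (\ref{eq.22}) is empty) or coincides with a factor already present on the left of $u_a^{(i)}$, making the wedge zero. Hence $u_a^{(i)}\wedge u_{k-s}^{(j)}=q^{\alpha_s}\,u_{k-s}^{(j)}\wedge u_a^{(i)}$ with no corrections, and from the definition of $\alpha$ one reads off $\alpha_s=-1$ when $u_a^{(i)}$ and $u_{k-s}^{(j)}$ share a residue modulo $n$ and $\alpha_s=0$ otherwise, that is $q^{\alpha_s}=q^{-\xi(u_{k-s}^{(j)},\,u_a^{(i)})}$.

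Passing $u_a^{(i)}$ through the whole run thus contributes the factor $\prod_{s=0}^{t}q^{-\xi(u_{k-s}^{(j)},\,u_a^{(i)})}$, and because $\xi(u_{[k,k-t]}^{(j)},u_a^{(i)})=\sum_{s=0}^{t}\xi(u_{k-s}^{(j)},u_a^{(i)})$ by Definition \ref{def-xi}, this equals $q^{-\xi(u_{[k,k-t]}^{(j)},\,u_a^{(i)})}$, which is the assertion. The main obstacle I anticipate is the careful verification of the vanishing of the correction terms — pinning down, from Corollary \ref{property1} and the maximality hypothesis, that the slot produced really is among $u_{k-s+1}^{(j)},\dots,u_k^{(j)}$, and checking the boundary values of the summation index in (\ref{eq.22}) (governed there by $\beta$ and $\gamma$), where a priori a correction term could reproduce one of the two original factors rather than a repeated one, as well as reading off the exact value of $\alpha_s$ in each residue configuration.
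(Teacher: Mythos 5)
The paper itself gives no proof of Lemma \ref{lem3}: it is quoted from \cite{I} (Lemma 4.8) and \cite{U} (Lemma 5.19), so there is no in-paper argument to compare against line by line. Your plan is the standard argument for this statement and it is sound: move $u_a^{(i)}$ past the consecutive run one adjacent factor at a time via (\ref{eq.22}); the leading coefficient at each step is $q^{\alpha_s}$ with $\alpha_s=-1$ exactly when the two beads share a column and $\alpha_s=0$ otherwise (and $(-q^{-1})^{\delta_{d_1=d_2}}=1$ since $i\ne j$), so additivity of $\xi$ over the run gives $q^{-\xi}$; and every correction term dies. The checks you flag as the anticipated obstacle do close, and they are the substance of the proof, so they should be written out: (a) since $u_{k-s}^{(j)}<u_a^{(i)}$ one always has $m_1\ge m_2$, so only $\mathrm{sgn}(m)\in\{0,-1\}$ occurs; (b) the column of the sector-$j$ correction factor is read off from the explicit form of (\ref{eq.22}) (it is $c_2$, the column of $u_a^{(i)}$) — Corollary \ref{property1}(iii) alone only gives the multiset of columns, which would not by itself exclude the correction landing back on $u_{k-s}^{(j)}$; in the equal-column case $c_1=c_2$ this is excluded because $\beta=1$ there, which is exactly the boundary issue you mention; (c) maximality of $k$, order-preservation of the map from sector positions to global positions within a fixed sector, and the consecutiveness of the run then force the sector-$j$ correction factor into $\{k-s+1,\dots,k\}$, i.e.\ onto a slot already occupied by a factor standing to the left; and (d) the resulting vanishing uses the level-one fact that a simple (same-sector) wedge with a repeated, not necessarily adjacent, factor is zero, which deserves an explicit statement or citation rather than just Example \ref{ex3}(i). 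With those points made explicit, and the symmetric bookkeeping for part (ii) (where minimality of $g$ plays the role of maximality of $k$ and $\alpha_s=+1$), your argument is a complete proof along the same lines as the cited one.
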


In the abacus presentation, $u_{a}^{(i)} , u_{[k,k-t]}^{(j)}$ and $u_{[g+t,g]}^{(j)}$ look as follows.

$$\begin{array}{cccc|crccc}
\mathrm{(i)} & \multicolumn{3}{c}{ d=i } & \multicolumn{4}{c}{ d=j } \\ \cline{8-9}
& & & &  &  & & \multicolumn{1}{|c}{ \small \centercircle{k-t} \normalsize } & \multicolumn{1}{c|}{} \\ \cline{6-7}
& & & &  & \multicolumn{1}{|c}{ \hspace{1em} \multirow{3}{*}{ $\xi  \, \left\{  \begin{array}{c} \bullet \\ \vdots \\ \bullet \end{array}   \right.$ } } & & & \multicolumn{1}{c|}{} \\ 
& & & &  & \multicolumn{1}{|c}{} & & & \multicolumn{1}{c|}{} \\
& & & &  & \multicolumn{1}{|c}{} & & \small \centercircle{k-1} \normalsize & \multicolumn{1}{c|}{ \centercircle{k} } \\  \cline{6-9}
& \centercircle{a} & &  &  &  | \hspace{1em}  & & \\
& & & &  & c \equiv a & & &  
\end{array} 
\hspace{1em} , \hspace{1em}
\begin{array}{cccc|ccrcc}
\mathrm{(ii)} & \multicolumn{3}{c}{ d=i } & \multicolumn{4}{c}{ d=j } \\ 
& & & &  & & & & \\ \cline{6-9}
& \centercircle{a} & &  & &  \multicolumn{1}{|c}{ \centercircle{g} } &  \tiny \centercircle{g+1} \normalsize 
  \multirow{3}{*}{ $\xi  \, \left\{  \begin{array}{c} \bullet \\ \vdots \\ \bullet \end{array}   \right.$ }  
  & & \multicolumn{1}{c|}{\hspace{1em}} \\  
& & & &  & \multicolumn{1}{|c}{} & & & \multicolumn{1}{c|}{} \\ \cline{9-9}
& & & &  & \multicolumn{1}{|c}{} & & \multicolumn{1}{c|}{ \tiny \centercircle{g+t} \normalsize } &  \\ \cline{6-8}
& & & &  & & | \hspace{1em} & & \\ 
& & & &  & & c \equiv a & & 
\end{array}
$$
where the boxed region means that all positions are occupied by beads.

%
%

Throughout this section, $\Bf{s} = (s_{1} , \ldots , s_{\ell}) \in \mathbb{Z}^{\ell}$ is a fixed multi charge. 

\subsection{A deletion lemma for finite $q$-wedges}

\begin{definition}
Let $1 \le d_{1} , d_{2} \le \ell$. 
Let 
$v^{(d_{1})} = u_{k_{1}}^{(d_{1})} \wedge u_{k_{2}}^{(d_{1})} \wedge \cdots \wedge u_{k_{r}}^{(d_{1})} $ and 
$w^{(d_{2})} = u_{g_{1}}^{(d_{2})} \wedge u_{g_{2}}^{(d_{2})} \wedge \cdots \wedge u_{g_{R}}^{(d_{2})} $ 
be two simple finite $q$-wedges. 

(i) The {\it size} of $v^{(d_{1})}$ for $s_{d_{1}}$ is 
$$ | v^{(d_{1})} |_{s_{d_{1}}} = 
	| u_{k_{1}}^{(d_{1})} \wedge u_{k_{2}}^{(d_{1})} \wedge \cdots \wedge u_{k_{r}}^{(d_{1})} |_{s_{d_{1}}} 
	= \sum_{i=1}^{r} | k_{i} - s_{d_{1}} + i | . $$

(ii) For a non-negative integer $M$, the pair $(v^{(d_{1})} , w^{(d_{2})} , s_{d_{1}} , s_{d_{2}})$ is 
	$M$-{\it dominant} if 
$$ | v^{(d_{1})} |_{s_{d_{1}}} +  | w^{(d_{2})} |_{s_{d_{2}}} + M \le s_{d_{1}} - s_{d_{2}} . $$
\label{def.6.1}
\end{definition}

\begin{example}
$(i)$ 
Let $s_{1} = 2$, $s_{2} = -4$, 
$v^{(1)} = u_{4}^{(1)} \wedge u_{2}^{(1)} \wedge u_{1}^{(1)} \wedge u_{-1}^{(1)} \wedge u_{-2}^{(1)} \wedge u_{-3}^{(1)}$ and 
$w^{(2)} = u_{-2}^{(2)}$. \\
Then $ | v^{(1)} |_{s_{1}} = 4 $ and $ | w^{(2)} |_{s_2} = 2 $. 
Thus, $| v^{(1)} |_{s_{1}} +  | w^{(2)} |_{s_{2}} \le s_{1} - s_{2}$. 
Therefore $(v^{(1)} , w^{(2)} , s_{1} , s_{2})$ is $0$-dominant.  

$(ii)$ Let $r$ be a positive integer and $\lambda$ be a partition whose length is less than $r$. 
For any $1 \le j \le \ell$, put 
$$ v_{\lambda}^{(j)} = u_{\lambda_{1}+s_{j}}^{(j)} \wedge u_{\lambda_{2}+s_{j}-1}^{(j)} \wedge \cdots 
\wedge u_{\lambda_{r}+s_{j}-r+1}^{(j)}. $$
Then, $| v_{\lambda}^{(j)} |_{s_{j}} = | \lambda | $. 
\end{example}

{\bf Remarks}. 
(i). Let $v^{(j)} = u_{k_{1}}^{(j)} \wedge u_{k_{2}}^{(j)} \wedge \cdots \wedge u_{k_{r}}^{(j)} $ and 
$w^{(j)} = u_{g_{1}}^{(j)} \wedge u_{g_{2}}^{(j)} \wedge \cdots \wedge u_{g_{r}}^{(j)} $ be two simple finite $q$-wedges 
such that the length of $v^{(j)}$ is equal to that of $w^{(j)}$. 
Then, the size of $v^{(j)}$ for $s_{j}$ is equal to that of $w^{(j)}$ if and only if 
$\sum_{i=1}^{r} k_{i} = \sum_{i=1}^{r} g_{i}$. 

(ii) From Corollary \ref{property1} (i), straightening rule preserves 
$ | v^{(d_{1})} |_{s_{d_{1}}} +  | w^{(d_{2})} |_{s_{d_{2}}} $. 
That is, if $x^{(d_{1})} \wedge y^{(d_{2})}$ appears in a linear expansion of $w^{(d_{2})} \wedge v^{(d_{1})}$, 
then 
$| v^{(d_{1})} |_{s_{d_{1}}} +  | w^{(d_{2})} |_{s_{d_{2}}} = | x^{(d_{1})} |_{s_{d_{1}}} +  | y^{(d_{2})} |_{s_{d_{2}}} $.

In particular, for a non-negative integer $M$, if the pair $(v^{(d_{1})} , w^{(d_{2})} , s_{d_{1}} , s_{d_{2}})$ is $M$-dominant, 
then the pair $(x^{(d_{1})} , y^{(d_{2})} , s_{d_{1}} , s_{d_{2}})$ is also $M$-dominant.

\begin{lemma}
Let $1 \le j \le \ell$ and 
$v^{(j)} = u_{k_{1}}^{(j)} \wedge u_{k_{2}}^{(j)} \wedge \cdots \wedge u_{k_{r}}^{(j)}$ be a simple finite $q$-wedge.  
Suppose that $\min \{ k_{1} , k_{2} , \cdots , k_{r} \} \ge s_{j} -r+1$ and $ | v^{(j)} |_{s_{j}} < 0 $, then $ v^{(j)} = 0 $. 
\label{lemma1}
\end{lemma}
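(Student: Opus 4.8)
The plan is to straighten $v^{(j)}$ into the standard basis: under the two hypotheses every standard basis vector occurring in the expansion will have partition size equal to $|v^{(j)}|_{s_{j}}$, and since partition sizes are nonnegative a negative value of $|v^{(j)}|_{s_{j}}$ leaves no room for any term, forcing $v^{(j)}=0$. First I would reduce to level one: since $v^{(j)}$ is simple, all of its factors carry the same upper index $j$, and by the Remark following Proposition \ref{s.rule} the straightening rules depend only on the inequality relations among the $d$-labels; as these all coincide, straightening $v^{(j)}$ amounts to the level-one straightening of $u_{k_{1}}\wedge\cdots\wedge u_{k_{r}}$ relative to the single charge $s_{j}$, so I may assume $\ell=1$, $s=s_{j}$, and $v=u_{k_{1}}\wedge\cdots\wedge u_{k_{r}}$.

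Next I would straighten $v$ fully, $v=\sum_{\mu}c_{\mu}(q)\,|\mu;s\rangle$, a finite $\mathbb{Q}(q)$-linear combination of standard basis vectors, obtained by a sequence of adjacent transpositions, each an instance of Proposition \ref{s.rule} (a term with two equal subscripts being $0$). By Corollary \ref{property1}(ii), each transposition replaces the two subscripts it interchanges by subscripts lying in the closed interval between them; hence every $q$-wedge arising during the process — in particular every ordered term $u_{g_{1}}\wedge\cdots\wedge u_{g_{r}}$ (with $g_{1}>\cdots>g_{r}$) in the final expansion — has all $g_{i}$ in $[\min_{i}k_{i},\max_{i}k_{i}]$, and by Corollary \ref{property1}(i) it has $\sum_{i}g_{i}=\sum_{i}k_{i}$. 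From $\min_{i}k_{i}\ge s-r+1$ and $g_{1}>\cdots>g_{r}$ one gets $g_{i}\ge s-i+1$ for every $i$, so $\mu_{i}:=g_{i}-s+i-1\ge0$ defines a genuine partition with $l(\mu)\le r$ and $u_{g_{1}}\wedge\cdots\wedge u_{g_{r}}=|\mu;s\rangle$. The number $|\mu|=\sum_{i}(g_{i}-s+i-1)=\sum_{i}(k_{i}-s+i-1)$ is then the same for every occurring $\mu$ and equals $|v^{(j)}|_{s_{j}}$ (Definition \ref{def.6.1}); since it is $\ge0$, the expansion has no terms when $|v^{(j)}|_{s_{j}}<0$, i.e.\ $v^{(j)}=0$.

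The step I expect to require care is the middle one: checking that $\min_{i}k_{i}\ge s_{j}-r+1$ is exactly the condition that keeps the straightening from crossing below the vacuum, so that every surviving term is a genuine standard vector $|\mu;s_{j}\rangle$ with $l(\mu)\le r$, and that $|v^{(j)}|_{s_{j}}$, being invariant under the transpositions by Corollary \ref{property1}(i), is correctly identified with each such $|\mu|$.
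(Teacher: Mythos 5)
Your argument is correct and is essentially the paper's own proof: both rest on Corollary \ref{property1}(i) (the index sum, hence the size, is preserved under straightening) and \ref{property1}(ii) (the indices never drop below $\min_i k_i \ge s_j-r+1$), so every ordered term in the expansion has size $\ge 0$, which is incompatible with $|v^{(j)}|_{s_j}<0$ unless $v^{(j)}=0$. Your preliminary reduction to level one is harmless but unnecessary, since Corollary \ref{property1} is already stated for wedges with arbitrary (here equal) superscripts.
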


\begin{proof}
If $ v^{(j)} \ne 0 $, then an ordered simple finite $q$-wedge  
$v' = u_{k'_{1}}^{(j)} \wedge u_{k'_{2}}^{(j)} \wedge \cdots \wedge u_{k'_{r}}^{(j)} $ 
	appears in the linear expansion of $v^{(j)}$. 
Then from Corollary \ref{property1} (ii), we have 
$ k'_{1} > k'_{2} > \cdots > k'_{r} \ge s_{j} -r+1 $. Thus $ | v' |_{s_{j}} \ge 0 $. 

On the other hand, from Corollary \ref{property1} (i) (see the previous Remark (ii)), 
$ | v^{(1)} |_{s_{1}} = | v' |_{s_{1}} $. 
This is a contradiction. 
\end{proof}

\begin{lemma}
Let $M,\gamma \in \mathbb{Z}_{\ge 0}$ such that $0 \le \gamma \le M$. 
Let $1 \le d_{1},d_{2} \le \ell, t \in \mathbb{Z}$ and 
$v^{(d_{1})} = u_{k_{1}}^{(d_{1})} \wedge u_{k_{2}}^{(d_{1})} \wedge \cdots \wedge u_{k_{r}}^{(d_{1})} $. 
If $(v^{(d_{1})} , u_{t}^{(d_{2})} , s_{d_{1}} , s_{d_{2}})$ is $M$-dominant, then 
$$ | u_{t + \gamma}^{(d_{1})} \wedge v^{(d_{1})} |_{s_{d_{1}}+1} < 0 .$$ 

Moreover, if $\min \{ t+\gamma , k_{1},k_{2},\cdots,k_{r} \} \ge s_{d_{1}} - r$, 
	then $u_{t + \gamma}^{(d_{1})} \wedge v^{(d_{1})} = 0$. 
\label{lemma2}
\end{lemma}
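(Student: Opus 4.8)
The plan is to prove the two assertions of the lemma in turn. The first is a direct evaluation of the size of Definition~\ref{def.6.1}, using the $M$-dominance hypothesis together with $\gamma\le M$. The second is then deduced from the first by recognizing $u_{t+\gamma}^{(d_1)}\wedge v^{(d_1)}$ as a simple finite $q$-wedge of length $r+1$ in the $d_1$-sector for the shifted charge $s_{d_1}+1$, and appealing to Lemma~\ref{lemma1}.

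For the first assertion, in the simple finite $q$-wedge $u_{t+\gamma}^{(d_1)}\wedge v^{(d_1)}$ the factor $u_{t+\gamma}^{(d_1)}$ sits in position $1$ and $u_{k_1}^{(d_1)},\ldots,u_{k_r}^{(d_1)}$ sit in positions $2,\ldots,r+1$. Evaluating the size relative to the charge $s_{d_1}+1$, the leading factor contributes $t+\gamma-s_{d_1}-1$, while the contribution of $u_{k_i}^{(d_1)}$ (now in position $i+1$, with the charge raised by $1$) equals its contribution to $|v^{(d_1)}|_{s_{d_1}}$, so
$$ |u_{t+\gamma}^{(d_1)}\wedge v^{(d_1)}|_{s_{d_1}+1}=(t+\gamma-s_{d_1}-1)+|v^{(d_1)}|_{s_{d_1}}. $$
Since the size of the single factor $u_t^{(d_2)}$ is $t-s_{d_2}$, the $M$-dominance of $(v^{(d_1)},u_t^{(d_2)},s_{d_1},s_{d_2})$ says $|v^{(d_1)}|_{s_{d_1}}+(t-s_{d_2})+M\le s_{d_1}-s_{d_2}$, i.e. $|v^{(d_1)}|_{s_{d_1}}\le s_{d_1}-t-M$. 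Substituting this and using $\gamma\le M$ gives $|u_{t+\gamma}^{(d_1)}\wedge v^{(d_1)}|_{s_{d_1}+1}\le \gamma-M-1\le-1<0$.

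For the second assertion, recall that $v^{(d_1)}$, as a finite $q$-wedge with $r$ explicit factors read off from the charge $s_{d_1}$, is completed by the ground-state tail $u_{s_{d_1}-r}^{(d_1)}\wedge u_{s_{d_1}-r-1}^{(d_1)}\wedge\cdots$. Prepending $u_{t+\gamma}^{(d_1)}$ shifts every position by one, so this same tail becomes the ground-state tail for the charge $s_{d_1}+1$, and $u_{t+\gamma}^{(d_1)}\wedge v^{(d_1)}$ is a simple finite $q$-wedge of length $r+1$ for the charge $s_{d_1}+1$. If $\min\{t+\gamma,k_1,\ldots,k_r\}>s_{d_1}-r$, then this minimum is $\ge(s_{d_1}+1)-(r+1)+1$, so the range hypothesis of Lemma~\ref{lemma1} is met; since the size of this wedge is negative by the first assertion, Lemma~\ref{lemma1} gives $u_{t+\gamma}^{(d_1)}\wedge v^{(d_1)}=0$. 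If $\min\{t+\gamma,k_1,\ldots,k_r\}=s_{d_1}-r$, then one explicit factor equals the leading factor $u_{s_{d_1}-r}^{(d_1)}$ of the tail, and the wedge vanishes by antisymmetry. (Lemma~\ref{lemma1} is stated for a charge among $s_1,\ldots,s_\ell$, but its proof — Corollary~\ref{property1} together with the nonnegativity of the size of an ordered $q$-wedge — goes through verbatim for the auxiliary charge $s_{d_1}+1$.)

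I expect the main work to be the bookkeeping in the second assertion: keeping precise track of the length $r+1$, the shifted charge $s_{d_1}+1$, and the location of the ground-state tail after inserting the new bead $u_{t+\gamma}^{(d_1)}$, and separating off the extremal case of the hypothesis, in which the wedge vanishes for the trivial reason of a repeated factor rather than by the size estimate. The first assertion and the remaining steps are routine arithmetic with the definitions.
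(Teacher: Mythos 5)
Your first assertion is proved exactly as in the paper (evaluate $|u_{t+\gamma}^{(d_1)}\wedge v^{(d_1)}|_{s_{d_1}+1}=(t+\gamma-s_{d_1}-1)+|v^{(d_1)}|_{s_{d_1}}$ and bound it by $\gamma-M-1<0$ using $M$-dominance), and your main route to the second assertion is also the paper's: apply Lemma~\ref{lemma1} to the length-$(r+1)$ simple wedge with the auxiliary integer $s_{d_1}+1$ in place of the charge. Your parenthetical remark that Lemma~\ref{lemma1} and its proof work verbatim for an arbitrary integer is correct and is in fact tacitly needed by the paper too, since the size in the first assertion is already taken with respect to $s_{d_1}+1$.

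The genuine problem is your treatment of the extremal case $\min\{t+\gamma,k_1,\dots,k_r\}=s_{d_1}-r$. There you silently replace the finite wedge by its completion with a sector-$d_1$ ground-state tail $u_{s_{d_1}-r}^{(d_1)}\wedge u_{s_{d_1}-r-1}^{(d_1)}\wedge\cdots$ and conclude by a repeated factor. But Lemma~\ref{lemma2} is a statement about finite $q$-wedges (Section 5 works with truncations, and no such tail is present), and the finite-wedge meaning is what the application requires: in Claim 2 inside the proof of Lemma~\ref{lemma4}, the wedge $u_{a-j}^{(1)}\wedge y$ must vanish as a finite wedge, sitting inside a longer finite wedge with a factor from the other sector in front of it and nothing after it. As a finite-wedge statement the extremal case is actually false: take $r=1$, $s_{d_1}=0$, $v^{(d_1)}=u_{-1}^{(d_1)}$, $M=\gamma=0$, $t=1$; then $(v^{(d_1)},u_{t}^{(d_2)},s_{d_1},s_{d_2})$ is $0$-dominant (with equality), $\min\{t+\gamma,k_1\}=-1=s_{d_1}-r$, and yet $u_{1}^{(d_1)}\wedge u_{-1}^{(d_1)}$ is an ordered, hence nonzero, finite wedge (its size for $s_{d_1}+1$ is $-1$, consistent with the first assertion). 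So no argument can close that case; your tail/antisymmetry step proves a different statement. The root cause is in the paper itself: the hypothesis should read $\min\{t+\gamma,k_1,\dots,k_r\}\ge s_{d_1}-r+1$, which is precisely what Lemma~\ref{lemma1} (charge $s_{d_1}+1$, length $r+1$) needs, precisely what the paper's one-line deduction of the second statement silently uses, and precisely what is available where the lemma is invoked ($\min\ge t_0+1$ in Claim 2 of Lemma~\ref{lemma4}). With that corrected hypothesis, your argument with the extremal case deleted coincides with the paper's proof.
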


\begin{proof}
For simplicity, we assume that $d_{1} = 1$ and $d_{2} = 2$. 
\begin{align*}
	| u_{t + \gamma}^{(1)} \wedge v^{(1)} |_{s_{1}+1} 
	&= t+\gamma - (s_{1}+1) + \sum_{i=1}^{r} (k_{i} - (s_{1}+1) + (i+1) - 1) \\
	&= t-s_{1}+\gamma-1 + | v^{(1)} |_{s_{1}} \\
	&= |u_{t}^{(2)}|_{s_{2}} + | v^{(1)} |_{s_{1}} + s_{2} - s_{1} + \gamma - 1 
		\hspace{3em} (\text{ By } |u_{t}^{(2)}|_{s_{2}} = t - s_{2} ) \\
	&\le s_{1} - s_{2} - M + s_{2} - s_{1} + \gamma - 1 
		\hspace{3em} (\text{ $(v^{(1)} , u_{t}^{(2)} , s_{1} , s_{2})$ is $M$-dominant }) \\
	&= \gamma - M - 1 < 0 . \hspace{3em} ( \gamma \le M )
\end{align*}
The second statement follows from the first statement and Lemma \ref{lemma1}. 
\end{proof}

\begin{lemma}
Let $v^{(j)} = u_{k_{1}}^{(j)} \wedge u_{k_{2}}^{(j)} \wedge \cdots \wedge u_{k_{r}}^{(j)}$ 
	be an ordered $q$-wedge such that $k_{r} \ge s_{j} -r+1$. 
Then, for any $1 \le a \le r$, 
$$ | u_{k_{a}}^{(j)} \wedge u_{k_{i+1}}^{(j)} \wedge \cdots \wedge u_{k_{r}}^{(j)} |_{s_{j} -a+1} 
	\le | v^{(j)} |_{s_{j}} . $$

In particular, for a non-negative integer $M$, if $(v^{(d_{1})} , w^{(d_{2})} , s_{d_{1}} , s_{d_{2}})$ is $M$-dominant, 
	then for all $1 \le a \le r$, 
	$(u_{k_{a}}^{(d_{1})} \wedge u_{k_{a+1}}^{(d_1)} \wedge \cdots \wedge u_{k_{r}}^{(d_1)} , w^{(2)} , s_{d_1}-i+1 , s_{d_2})$ 
	is also $M$-dominant. 
\label{lemma3}
\end{lemma}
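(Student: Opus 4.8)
The plan is to translate the statement entirely into the language of partitions and then reduce it to the non‑negativity of partition parts. Since $v^{(j)} = u_{k_{1}}^{(j)} \wedge \cdots \wedge u_{k_{r}}^{(j)}$ is an \emph{ordered} $q$-wedge we have $k_{1} > k_{2} > \cdots > k_{r}$, and the hypothesis $k_{r} \ge s_{j}-r+1$ says precisely that the integers $\lambda_{i} := k_{i} - s_{j} + i - 1$ form a partition $\lambda = (\lambda_{1} \ge \lambda_{2} \ge \cdots \ge \lambda_{r} \ge 0)$ of length at most $r$: indeed $\lambda_{i} - \lambda_{i+1} = (k_{i} - k_{i+1}) - 1 \ge 0$ and $\lambda_{r} \ge 0$. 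The whole point of this substitution is that $k_{i} - s_{j} + i = \lambda_{i} + 1 \ge 1 > 0$ for every $i$, so every absolute value appearing in Definition \ref{def.6.1}(i) is vacuous.

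First I would compute the two sizes occurring in the inequality. On the one hand $| v^{(j)} |_{s_{j}} = \sum_{i=1}^{r} | k_{i} - s_{j} + i | = \sum_{i=1}^{r} (\lambda_{i}+1) = | \lambda | + r$. On the other hand, write $w_{a} := u_{k_{a}}^{(j)} \wedge u_{k_{a+1}}^{(j)} \wedge \cdots \wedge u_{k_{r}}^{(j)}$ (I read the index $i$ in the printed tail and in the charge $s_{d_{1}}-i+1$, as well as the superscript in $w^{(2)}$, as the typos $a$ and $d_{2}$); this wedge has length $r-a+1$, and relabelling its factors as the $1$st through $(r-a+1)$st and applying Definition \ref{def.6.1}(i) with charge $s_{j}-a+1$, the $p$-th term is $| k_{a+p-1} - (s_{j}-a+1) + p | = | k_{a+p-1} - s_{j} + (a+p-1) | = \lambda_{a+p-1} + 1$, so $| w_{a} |_{s_{j}-a+1} = \sum_{i=a}^{r} (\lambda_{i}+1)$. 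Subtracting,
\begin{equation*}
| v^{(j)} |_{s_{j}} - | w_{a} |_{s_{j}-a+1} = \sum_{i=1}^{a-1} \lambda_{i} + (a-1) \ge a-1 \ge 0 ,
\end{equation*}
which gives the claimed inequality, and in fact the sharper bound $| w_{a} |_{s_{j}-a+1} \le | v^{(j)} |_{s_{j}} - (a-1)$.

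For the ``in particular'' statement I would feed this refined bound into Definition \ref{def.6.1}(ii): assuming $(v^{(d_{1})}, w^{(d_{2})}, s_{d_{1}}, s_{d_{2}})$ is $M$-dominant, i.e.\ $| v^{(d_{1})} |_{s_{d_{1}}} + | w^{(d_{2})} |_{s_{d_{2}}} + M \le s_{d_{1}} - s_{d_{2}}$, one gets
\begin{equation*}
| w_{a} |_{s_{d_{1}}-a+1} + | w^{(d_{2})} |_{s_{d_{2}}} + M \le | v^{(d_{1})} |_{s_{d_{1}}} - (a-1) + | w^{(d_{2})} |_{s_{d_{2}}} + M \le (s_{d_{1}} - a + 1) - s_{d_{2}} ,
\end{equation*}
which is exactly $M$-dominance of $(w_{a}, w^{(d_{2})}, s_{d_{1}}-a+1, s_{d_{2}})$; note that here the extra term $-(a-1)$ is genuinely needed to absorb the shift of the charge on the right-hand side.

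I do not expect any serious obstacle here: this is a bookkeeping lemma, and once the $q$-wedge is encoded as the partition $\lambda$ everything is a one-line consequence of $\lambda_{i} \ge 0$. The only points requiring a little care are the index shift $p \mapsto i = a+p-1$ in the size formula, the verification that the absolute values in the definition of size drop because $\lambda$ is a genuine partition, and recording the $-(a-1)$ strengthening (at no extra cost) since the displayed inequality as literally stated is marginally too weak for the corollary-type conclusion that follows.
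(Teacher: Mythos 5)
Your treatment of the main inequality is essentially the paper's own argument: encode the ordered wedge by $\lambda_i:=k_i-s_j+i-1$, note $\lambda_i\ge 0$ from $k_1>k_2>\cdots>k_r\ge s_j-r+1$, and observe that passing to the tail at the shifted charge $s_j-a+1$ simply deletes the first $a-1$ terms. The one discrepancy is the size formula: you took Definition \ref{def.6.1}(i) literally, $\sum_i|k_i-s_{d_1}+i|$, so that each factor contributes $\lambda_i+1$; but the example immediately after that definition (where $|v^{(1)}|_{s_1}=4$ and $|v^{(j)}_{\lambda,r}|_{s_j}=|\lambda|$), the computation $|u_t^{(2)}|_{s_2}=t-s_2$ in the proof of Lemma \ref{lemma2}, the hypotheses of Lemma \ref{lemma4}, and the paper's own proof of this lemma all show that the intended size is $\sum_i(k_i-s_{d_1}+i-1)$ (indeed, with your reading the size could never be negative, which would make Lemma \ref{lemma1} vacuous). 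With the intended definition $|v^{(j)}|_{s_j}=|\lambda|$ and the tail has size $\sum_{i\ge a}\lambda_i$, and the first statement follows exactly as you argue, so for that part the misprint is harmless.

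For the ``in particular'' statement, however, your deduction hinges on the strengthening $|w_a|_{s_j-a+1}\le |v^{(j)}|_{s_j}-(a-1)$, and this extra $a-1$ is an artifact of the ``$+1$ per factor'' misreading: with the intended size the difference is only $\sum_{i<a}\lambda_i$, which can vanish (take $\lambda_1=\cdots=\lambda_{a-1}=0$), so the drop of $a-1$ in the right-hand side $(s_{d_1}-a+1)-s_{d_2}$ is not absorbed and the implication fails as written when the $M$-dominance is tight. You have in fact put your finger on a genuine subtlety that the paper glosses over by declaring the second statement ``clear'': taken literally with Definition \ref{def.6.1}(ii) at the shifted charge, it needs either positivity of the removed parts or a weaker conclusion (dominance measured against the unshifted difference $s_{d_1}-s_{d_2}$); in the place where this mechanism is actually exploited later, namely the inequality (\ref{eq.26}) in the proof of Lemma \ref{lemma4}, the charge shift is compensated precisely because the removed part satisfies $\lambda_1\ge 1$. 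So: same route as the paper for the displayed inequality, but the second half of your write-up should be redone with the corrected size formula, stating explicitly what hypothesis (e.g.\ removed parts $\ge 1$) is needed to absorb the shift $a-1$.
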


\begin{proof}
Since $v^{(j)}$ is ordered and $k_{r} \ge s_{j}-r+1$, $k_{1} > k_{2} > \cdots > k_{r} \ge s_{j} -r+1$. 
Hence, for any $1 \le a \le r$, $k_{a} \ge s_{j}-a+1$. 
Thus, 
\begin{align*}
	| u_{k_{a}}^{(j)} \wedge u_{k_{a+1}}^{(j)} \wedge \cdots \wedge u_{k_{r}}^{(j)} |_{s_{j}-a+1} 
	&= \sum_{i=a}^{r} (k_i -(s_j-a+1)+(i-a+1)-1) \\
	&= \sum_{i=a}^{r} (k_i - s_j + i -1) \\
	&\le \sum_{i=1}^{r} (k_i - s_j + i -1) 
	= |v^{(j)}|_{s_{j}} 
\end{align*}

The second statement is clear. 
\end{proof}

\subsection{A exchange rule for dominant finite $q$-wedges}

\begin{definition}
Let $1 \le j \le \ell$, $\lambda \in \Pi$ and $k,g$ be two integers such that $k \ge g$. 
We define 
\begin{align*}
	u_{[k,g]}^{(j)} &= u_{k}^{(j)} \wedge u_{k-1}^{(j)} \wedge u_{k-2}^{(j)} \wedge \cdots \wedge u_{g+1}^{(j)} \wedge u_{g}^{(j)} , \\
	v_{\lambda,r}^{(j)} &= u_{\lambda_{1}+s_{j}}^{(j)} \wedge u_{\lambda_{2}+s_{j}-1}^{(j)} \wedge 
		\cdots \wedge u_{\lambda_{r}+s_{j}-r+1}^{(j)} .
\end{align*}
\end{definition}

\begin{lemma}
Let $m \in \mathbb{Z}_{\ge 0}$ and $\lambda \in \Pi$. 
Let $t,t_0$ be two integers such that $t \ge t_{0}$. 
Suppose that $ | \lambda | + | u_{t}^{(d_{2})} |_{s_{d_{2}}} \le s_{d_{1}} - s_{d_{2}} - nm $. 
Put $r = s_{d_{1}} - t_{0}$ and $v^{(d_{1})} = v_{\lambda,r}^{(d_{1})}$. 
If $v^{(d_1)} \wedge u_{t}^{(d_2)}$ is expressed as 
\begin{align*}
	v^{(d_1)} \wedge u_{t}^{(d_2)} 
	&= \sum_{t' , \Bf{y}} \alpha(t',y) \, u_{t'}^{(d_2)} \wedge y \quad , 
\end{align*}
where $t' \in \mathbb{Z}$, $y = u_{\Bf{k}}^{(d_1)} \, (\Bf{k} \in \mathbb{Z}^{r})$ and $ \alpha(t',y) \in \mathbb{Q}(q) $. 
Then, 
\begin{align*}
	v^{(d_1)} \wedge u_{t+nm}^{(d_2)} 
	&= q^{2m} \sum_{t' , y} \alpha(t',y) \, u_{t'+nm}^{(d_2)} \wedge y \quad .
\end{align*}
\label{lemma4}
\end{lemma}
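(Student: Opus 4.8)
The plan is to reduce the statement to the two-factor straightening rule of Proposition \ref{s.rule} together with the size bookkeeping already developed in \S 5.2. First I would set $d_1=1$, $d_2=2$ for notational simplicity, as is done elsewhere in this section. The key observation is that $v^{(d_1)}$ is a \emph{simple} $q$-wedge of a fixed length $r=s_{d_1}-t_0$ whose entries are $k_i=\lambda_i+s_{d_1}-i+1$, so $\min_i k_i\ge s_{d_1}-r+1=t_0+1>t_0\ge$ (the second factor). Thus, when we move $u_t^{(2)}$ (or $u_{t+nm}^{(2)}$) leftward past $v^{(1)}$ one factor at a time, at each step we apply the exchange relation (\ref{eq.22}) to a pair $u_{k_a}^{(1)}\wedge u_{g}^{(2)}$ with $k_a>g$, i.e. $m_1<m_2$ in the notation of the straightening rule, so $\mathrm{sgn}(m)=1$ and $\delta_{d_1=d_2}=0$. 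The dominance hypothesis $|\lambda|+|u_t^{(2)}|_{s_{d_2}}\le s_{d_1}-s_{d_2}-nm$ is exactly what guarantees (via the Remark after Definition \ref{def.6.1} and Lemma \ref{lemma3}) that the pair stays $nm$-dominant throughout, and in particular the ``large-gap'' regime persists, so the summation range $\sum_{j=\beta}^{|m_1-m_2|-\gamma}$ in (\ref{eq.22}) is controlled.

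The heart of the argument is a \emph{term-by-term comparison} between the expansion of $v^{(1)}\wedge u_t^{(2)}$ and that of $v^{(1)}\wedge u_{t+nm}^{(2)}$. I would prove by induction on $r$ the following sharper claim: writing $v^{(1)}=u_{k_1}^{(1)}\wedge v'$ with $v'$ of length $r-1$, the straightening of $u_{k_1}^{(1)}\wedge u_g^{(2)}$ produces terms $u_{g+j}^{(2)}\wedge u_{k_1-j}^{(1)}$ with coefficients $C_j(q)$ depending only on $j$ and on the residues $c_1,c$, \emph{not} on the actual integers, \emph{as long as $j$ stays within the allowed range}; this is precisely Corollary \ref{property2}. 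So replacing $g$ by $g+nm$ shifts every index by $nm$ and leaves the coefficients $C_j(q)$ unchanged for the relevant $j$'s, provided $nm\le k_1-g-nm$, i.e. $k_1\ge g+2nm$ — and this inequality follows from the $nm$-dominance plus $m_1<m_2$. Each such elementary exchange past a single factor $u_{k_a}^{(1)}$ therefore contributes the same combinatorial data in both computations, with all ``$(2)$-indices'' uniformly shifted by $nm$.

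The remaining point is to account for the global factor $q^{2m}$. This is where I expect the real work to be. Each time the second factor is commuted past one factor $u_{k_a}^{(1)}$ of $v^{(1)}$, the leading (``$\alpha$'') term of (\ref{eq.22}) carries a power $q^{\alpha}$ with $\alpha\in\{-1,0,1\}$ according to whether $c_a=c$ and the sign of the gap; since here the gap has fixed sign, $\alpha$ depends only on the residue comparison $c_a$ vs.\ $c$. Across all $r$ factors, and over all the branches generated by the correction terms, one must check that the total exponent shift between the $t$-computation and the $(t+nm)$-computation is exactly $2m$ — intuitively, shifting a bead down by $nm$ rows on the abacus of level one contributes $q^{2m}$, matching the Heisenberg-type normalization visible in the commutator $[B_m,B_{m'}]$ in \S 2.5 (with $\ell$ replaced by $1$ in each sector). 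Concretely I would isolate, for a single exchange $u_k^{(1)}\wedge u_g^{(2)}$ versus $u_k^{(1)}\wedge u_{g+nm}^{(2)}$, the identity
\begin{equation*}
u_k^{(1)}\wedge u_{g+nm}^{(2)} = q^{2m}\sum_{j=0}^{k-g-nm} C_j(q)\,u_{g+nm+j}^{(2)}\wedge u_{k-j}^{(1)},
\end{equation*}
deduce it from the explicit form of (\ref{eq.22}) by a direct computation with the $q$-Serre-type telescoping in the correction sum (here Corollary \ref{property2} does most of the job, and the extra $q^{2m}$ comes from the change in $\alpha$ together with the endpoints of the $j$-range), and then iterate. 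Feeding this single-exchange identity into the induction on $r$, and invoking Lemma \ref{lemma3} at each stage to preserve $nm$-dominance of the truncated wedge $u_{k_a}^{(1)}\wedge\cdots\wedge u_{k_r}^{(1)}$ against $u_t^{(2)}$, yields the asserted identity for the full $q$-wedge. The main obstacle is precisely the careful verification that the scalar $q^{2m}$ is produced once and only once overall — i.e.\ that the per-factor exponent discrepancies telescope correctly and the correction terms do not disturb the count — and I would handle this by reducing everything, via the identification of each ``sector'' with a level-one abacus (the Remark after Definition \ref{notation} and the Remark after Proposition \ref{s.rule}), to a clean statement purely about level-one $q$-wedges.
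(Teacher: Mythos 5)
Your overall framework (peel factors off $v^{(d_1)}$ one at a time, match coefficients via Corollary \ref{property2}, propagate dominance via Lemma \ref{lemma3}) is not unreasonable, but the mechanism you propose for the factor $q^{2m}$ is wrong, and that is the heart of the lemma. The single-exchange identity you want to isolate,
\[
u_k^{(1)}\wedge u_{g+nm}^{(2)} \;=\; q^{2m}\sum_{j=0}^{k-g-nm} C_j(q)\, u_{g+nm+j}^{(2)}\wedge u_{k-j}^{(1)},
\]
is false: Corollary \ref{property2} states precisely that the coefficients of $u_k^{(1)}\wedge u_g^{(2)}$ and $u_k^{(1)}\wedge u_{g+nm}^{(2)}$ coincide on the common range of $j$ with \emph{no} extra power of $q$ (the sign data $\alpha,\beta,\gamma$ in (\ref{eq.22}) depend only on residues and the sign of the gap, both unchanged under $g\mapsto g+nm$); this is also visible in the example following the lemma, where $w^{(1)}\wedge u_{-4}^{(2)}$ and $w^{(1)}\wedge u_{-2}^{(2)}$ have the identical coefficients $q^{-1}$, $-(q-q^{-1})$, $q^{2}-1$. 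Moreover, if such a factor did occur at each elementary exchange, iterating over the $r$ factors would yield $q^{2mr}$, not $q^{2m}$. In the paper the factor $q^{2m}$ has an entirely different origin: the hypothesis $|\lambda|+|u_t^{(d_2)}|_{s_{d_2}}\le s_{d_1}-s_{d_2}-nm$ forces the trailing parts of $\lambda$ to vanish, so $v^{(1)}=w^{(1)}\wedge x^{(1)}$ with $x^{(1)}=u_{[t+nm,\,t_0+1]}^{(1)}$ a fully occupied consecutive block, and commuting $u_t^{(2)}$, resp.\ $u_{t+nm}^{(2)}$, through this block via Lemma \ref{lem3} produces exponents $c-m$ and $c+m$; their difference accounts for $q^{2m}$ exactly once, while the comparison of $w^{(1)}\wedge u_{t}^{(2)}\wedge x^{(1)}$ with $w^{(1)}\wedge u_{t+nm}^{(2)}\wedge x^{(1)}$ (Claim 1, by induction on $|\lambda|$) involves no power of $q$ at all.

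The second gap is the fate of the ``excess'' terms. In the expansion of $u_a^{(1)}\wedge u_{t'}^{(2)}$ the index $j$ runs up to $a-t'$, whereas for $u_a^{(1)}\wedge u_{t'+nm}^{(2)}$ it stops at $a-t'-nm$; you hope the per-factor discrepancies ``telescope correctly,'' but give no reason why the surplus terms disappear, and they do not disappear at the level of a single pair. They are killed only after wedging with the remaining factors: writing $a-j=t'+\gamma$ with $0\le\gamma<nm$, the wedge $u_{t'+\gamma}^{(1)}\wedge y$ has negative size relative to the shifted charge (by the $nm$-dominance propagated through Corollary \ref{property1} and the bookkeeping (\ref{eq.26}), (\ref{eq.28})), hence vanishes by Lemma \ref{lemma1} via Lemma \ref{lemma2}; this is Claim 2 in the paper, and it is exactly the ``deletion by the bottom block'' phenomenon that the example after the lemma illustrates. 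Also note that your blanket assumption $k_a>g$ (so $\mathrm{sgn}(m)=1$) at every step fails for the lower factors of $v^{(1)}$, which interleave with $u_{t+nm}^{(2)}$. Without the block decomposition supplying both the $q^{2m}$ and the vanishing of the extra terms, your induction cannot close.
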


\begin{example}
Let $n=\ell=2, m=1$ and $s_1=2 , s_2=-4$. 
Let $\lambda = (1,1)$ and $t=t_{0}=-4$. 
Then, $r=6$ and 
$v^{(1)} = v_{(1,1),6}^{(1)} 
	= u_{2}^{(1)} \wedge u_{1}^{(1)} \wedge u_{[-1,-4]}^{(1)} 
	= u_{2}^{(1)} \wedge u_{1}^{(1)} \wedge u_{-1}^{(1)} \wedge u_{-2}^{(1)} \wedge u_{-3}^{(1)} \wedge u_{-4}^{(1)}$. 

Since $|\lambda| + |u_{-4}^{(2)}|_{s_2} = 2+0 \le s_1 - s_2 - n m$, these satisfy the condition in Lemma \ref{lemma4}. 
The expansions of $v^{(1)} \wedge u_{-4}^{(2)}$ and $v^{(1)} \wedge u_{-2}^{(2)}$ are 
\begin{align*}
	v^{(1)} \wedge u_{-4}^{(2)} 
	=& q^{-1} u_{-4}^{(2)} \wedge u_{2}^{(1)} \wedge u_{1}^{(1)} \wedge u_{[-1,-4]}^{(1)} 
	- (q-q^{-1}) u_{-3}^{(2)} \wedge u_{2}^{(1)} \wedge u_{0}^{(1)} \wedge u_{[-1,-4]}^{(1)} \\
	& + (q^{2}-1)u_{-2}^{(2)} \wedge u_{1}^{(1)} \wedge u_{0}^{(1)} \wedge u_{[-1,-4]}^{(1)} \quad , \\
	v^{(1)} \wedge u_{-2}^{(2)} 
	=& q u_{-2}^{(2)} \wedge u_{2}^{(1)} \wedge u_{1}^{(1)} \wedge u_{[-1,-4]}^{(1)} 
	- (q^3-q) u_{-1}^{(2)} \wedge u_{2}^{(1)} \wedge u_{0}^{(1)} \wedge u_{[-1,-4]}^{(1)} \\
	& + (q^{4}-q^{2})u_{0}^{(2)} \wedge u_{1}^{(1)} \wedge u_{0}^{(1)} \wedge u_{[-1,-4]}^{(1)} . 
\end{align*}

On the other hand, let $w^{(1)} = u_{2}^{(1)} \wedge u_{1}^{(1)}$. 
Then, the expansions of $w^{(1)} \wedge u_{-4}^{(2)}$ and $w^{(1)} \wedge u_{-2}^{(2)}$ are 
\begin{align*}
	w^{(1)} \wedge u_{-4}^{(2)} 
	=& q^{-1} u_{-4}^{(2)} \wedge u_{2}^{(1)} \wedge u_{1}^{(1)} 
	-(q-q^{-1}) u_{-3}^{(2)} \wedge u_{2}^{(1)} \wedge u_{0}^{(1)} 
	+ (q^{2}-1)u_{-2}^{(2)} \wedge u_{1}^{(1)} \wedge u_{0}^{(1)} \\
	& -(q-q^{-1}) u_{-1}^{(2)} \wedge u_{2}^{(1)} \wedge u_{-2}^{(1)} 
	+(q^{2}+1) u_{0}^{(2)} \wedge u_{1}^{(1)} \wedge u_{-2}^{(1)} \quad ,\\	
	w^{(1)} \wedge u_{-2}^{(2)} 
	=& q^{-1} u_{-2}^{(2)} \wedge u_{2}^{(1)} \wedge u_{1}^{(1)} 
	-(q-q^{-1}) u_{-1}^{(2)} \wedge u_{2}^{(1)} \wedge u_{0}^{(1)} 
	+ (q^{2}-1)u_{0}^{(2)} \wedge u_{1}^{(1)} \wedge u_{0}^{(1)} . 
\end{align*}

In general, the expansion of $w^{(1)} \wedge u_{t}^{(2)}$ has more terms than that of $w^{(1)} \wedge u_{t+nm}^{(2)}$. 
Lemma \ref{lemma4} asserts that $u_{[-1,-4]}^{(1)}$ deletes the "excessive" terms 
and the coefficient $q^{2m}$ comes from the difference between 
$u_{[-1,-4]}^{(1)} \wedge u_{-4}^{(2)}$ and $u_{[-1,-4]}^{(1)} \wedge u_{-2}^{(2)}$. 
\end{example}

\begin{proof}[(proof of Lemma \ref{lemma4})]
For simplicity, we assume that $d_{1} = 1$ and $d_{2} = 2$. 

From $r=s_{1}-t_{0}$ and $| \lambda | + | u_{t}^{(2)} |_{s_2} \le s_{1}-s_{2}-nm$, we have  
$| \lambda | \le r - nm - t + t_{0}$. 
Thus, $\lambda_{r} = \lambda_{r-1} = \cdots = \lambda_{r-nm-t+t_{0}+1} = 0$ and $|\lambda| = |v^{(1)}|_{s_1}$. 
In particular, the pair $(v^{(1)} , u_t^{(2)} , s_1 , s_2)$ is $nm$-dominant. 

We divide $v^{(1)}$ into two parts, say 
\begin{align*}
	w^{(1)} &= u_{\lambda_{1} + s_{1}}^{(1)} \wedge u_{\lambda_{2} + s_{1} - 1}^{(1)} \wedge 
	\cdots \wedge u_{\lambda_{r-nm-t+t_{0}} +s_{1}-(r-nm-t+t_{0}-1)}^{(1)} 
	= v_{\lambda , r-nm-t+t_{0}}^{(1)}
\quad , \quad \\
	x^{(1)} &= u_{s_{1}-r+nm+t-t_{0}}^{(1)} \wedge u_{s_{1}-r+nm+t-t_{0}-1}^{(1)} \wedge \cdots \wedge u_{s_{1}-r+1}^{(1)}
	= u_{[t+nm,t_{0}+1]}^{(1)} 
\quad .
\end{align*}
It is clear that $v^{(1)} = w^{(1)} \wedge x^{(1)}$. 

Let $c = \xi (u_{[t,t_{0}+1]}^{(1)} , u_{t}^{(2)})$. 
By Lemma \ref{lem3}, 
$x^{(1)} \wedge u_{t}^{(2)} = q^{c-m} u_{t}^{(2)} \wedge x^{(1)}$ and 
$x^{(1)} \wedge u_{t+nm}^{(2)} = q^{c+m} u_{t}^{(2)} \wedge x^{(1)}$, 
since $\xi (u_{[t+nm,t+1]}^{(1)} , u_{t}^{(2)}) = \xi (u_{t+nm}^{(2)} , u_{[t+nm,t+1]}^{(1)}) = m$. 
Hence, the following claim deduces Lemma \ref{lemma4}.
\smallskip

{\bf Claim 1.} \it 
If $w^{(1)} \wedge u_{t}^{(2)} \wedge x^{(1)}$ is expressed as 
\begin{align*}
	w^{(1)} \wedge u_{t}^{(2)} \wedge x^{(1)} 
	&= \sum_{t' , \Bf{y}} \beta(t',y) \, u_{t'}^{(2)} \wedge y \quad , 
\end{align*}
where $t' \in \mathbb{Z}$, $y = u_{\Bf{k}}^{(1)} \, (\Bf{k} \in \mathbb{Z}^{r})$ and $ \beta(t',y) \in \mathbb{Q}(q) $. 
Then, 
\begin{align*}
	w^{(1)} \wedge u_{t+nm}^{(2)} \wedge x^{(1)} 
	&= \sum_{t' , y} \beta(t',y) \, u_{t'+nm}^{(2)} \wedge y \quad .
\end{align*}

\rm

\begin{proof}[(proof of Claim 1.)]
We prove it by induction on the size of the partition $\lambda$. 

If $|\lambda| =0$, we have $w^{(1)} = u_{[s_{1},t+nm+1]}^{(1)}$. 
Thus, from Lemma \ref{lem3}, 
\begin{align*}
	w^{(1)} \wedge u_{t}^{(2)} \wedge x^{(1)} 
	&= q^{-c+m} u_{[s_{1},t+nm+1]}^{(1)} \wedge x^{(1)} \wedge u_{t}^{(2)} 
	= q^{-c+m} u_{[s_{1},t_{0}+1]}^{(1)} \wedge u_{t}^{(2)} \\
	&= q^{c'} u_{t}^{(2)} \wedge u_{[s_{1},t_{0}+1]}^{(1)} \quad , 
\end{align*}
where $c' = \xi( u_{[s_{1},t+nm+1]}^{(1)} , u_{t}^{(2)} )$. 
On the other hand, from $\xi( u_{[s_{1},t+nm+1]}^{(1)} , u_{t+nm}^{(2)} ) = c'$ and Lemma \ref{lem3}, 
\begin{align*}
	w^{(1)} \wedge u_{t+nm}^{(2)} \wedge x^{(1)} 
	= u_{[s_{1},t+nm+1]}^{(1)} \wedge u_{t+nm}^{(2)} \wedge x^{(1)} 
	&= q^{c'} u_{t+nm}^{(2)} \wedge u_{[s_{1},t_{0}+1]}^{(1)} \quad . 
\end{align*}

Suppose that $|\lambda|>0$. 
Let $\check{w}^{(1)}$ be the simple finite $q$-wedge removing the first component from $w^{(1)}$, i.e. 
$$\check{w}^{(1)} = u_{\lambda_{2} + s_{1} - 1}^{(1)} \wedge u_{\lambda_{3} + s_{1} - 2}^{(1)} \wedge 
	\cdots \wedge u_{\lambda_{r-nm-t+t_{0}} +s_{1}-(r-nm-t+t_{0}-1)}^{(1)} . $$
Note that 
$ |\check{w}^{(1)}|_{s_{1}-1} = |w^{(1)}|_{s_{1}-1} - \lambda_1 = |\lambda| - \lambda_1 .$
Since $| \lambda | + | u_{t}^{(2)} |_{s_2} \le s_{1}-s_{2}-nm$ and $\lambda_1 \ge 1$, 
\begin{equation}
	|\check{w}^{(1)}|_{s_{1}-1} + | u_{t}^{(2)} |_{s_2}
	= | \lambda | - \lambda_1 + | u_{t}^{(2)} |_{s_2} 
	\le (s_{1}-1)-s_{2}-nm \quad . 
\label{eq.26}
\end{equation} 
Thus, the pair $\{ \check{\lambda} = (\lambda_2 , \lambda_3 , \cdots ), s_{1}-1 , s_{2} , t , t_{0} \}$ satisfy the condition of the claim 1.
Hence, from the induction hypothesis, 
if $\check{w}^{(1)} \wedge u_{t}^{(2)} \wedge x^{(1)}$ is expressed as 
$\check{w}^{(1)} \wedge u_{t}^{(2)} \wedge x^{(1)} = \sum_{t' , \Bf{y}} \check{\beta}(t',y) \, u_{t'}^{(2)} \wedge y $
then, 
\begin{align}
	\check{w}^{(1)} \wedge u_{t+nm}^{(2)} \wedge x^{(1)} 
	&= \sum_{t' , y} \check{\beta}(t',y) \, u_{t'+nm}^{(2)} \wedge y \quad .
\label{eq.27}
\end{align}
From Corollary \ref{property1} (i) and (ii), for all $t', y=u_{\Bf{k}}^{(1)} = u_{k_{2}}^{(1)} \wedge \cdots \wedge u_{k_{r}}^{(1)}$ 
such that $\check{\beta}(t',y) \ne 0$, 
\begin{equation}
	|\check{w}^{(1)}|_{s_{1}-1} + | u_{t}^{(2)} |_{s_2} = |y|_{s_{1}-1} + | u_{t'}^{(2)} |_{s_2} 
	\quad \text{ and } \quad 
	\lambda_{2} + s_1 -1 \ge t'+nm \ge t +nm \quad . 
\label{eq.28}
\end{equation}

Let $a=\lambda_1+s_1$. 
From Corollary \ref{property1} (ii), for $t' \le a-nm$
\begin{align}
	u_{a}^{(1)} \wedge u_{t'}^{(2)} 
	&= \sum_{j=0}^{a-t'} C_{j}(q) \, u_{t'+j}^{(2)} \wedge u_{a-j}^{(1)} \quad , 
\label{eq.29} \\
	u_{a}^{(1)} \wedge u_{t'+nm}^{(2)} 
	&= \sum_{j=0}^{a-t'-nm} C_{j}'(q) \, u_{t'+nm+j}^{(2)} \wedge u_{a-j}^{(1)} \quad ,
\label{eq.30}
\end{align}
where $C_{j}(q) \in \mathbb{Q}(q)$. 
Note that if $0 \le j \le a-t'-nm$ then $C_{j}(q) = C_{j}'(q)$ because of Corollary \ref{property2}. 
From (\ref{eq.27}), (\ref{eq.29}) and (\ref{eq.30}), we have
\begin{align*}
	w^{(1)} \wedge u_{t}^{(2)} \wedge x^{(1)} 
	&= \sum_{t' , y} \sum_{j=0}^{a-t'} C_{j}(q) \check{\beta}(t',y) \, u_{t'+j}^{(2)} \wedge u_{a-j}^{(1)} \wedge y \quad , \\
	w^{(1)} \wedge u_{t+nm}^{(2)} \wedge x^{(1)} 
	&= \sum_{t' , y} \sum_{j=0}^{a-t'-nm} C_{j}(q) \check{\beta}(t',y) \, u_{t'+nm+j}^{(2)} \wedge u_{a-j}^{(1)} \wedge y \quad . 
\end{align*}
Therefore the following claim proves Claim 1. and Lemma \ref{lemma4}. 
\smallskip

{\bf Claim 2. } \it 
Let $a-t'-nm+1 \le j \le a-t'$. Then $u_{a-j}^{(1)} \wedge y = 0$. 
\rm

\begin{proof}[(proof of Claim 2.)]
Let $\gamma =a-t'-j$. 
Then, $t'+\gamma = a-j$ and $0 \le \gamma < nm$.
We exchange $j$ for $\gamma$. 
i.e. we prove that $u_{t'+\gamma}^{(1)} \wedge y = 0$ if $0 \le \gamma < nm$. 
To prove it, we use Lemma \ref{lemma2}. 

Note that the length of the finite $q$-wedge $u_{t'+\gamma}^{(1)} \wedge y$ is equal to that of $v^{(1)}$, that is $r=s_{1}-t_{0}$. 
From Corollary \ref{property1} (ii), it is clear that 
	$\min \{ t'+\gamma , k_{2},k_{3},\cdots,k_{r} \} \ge t_{0}+1 = s_1-(r-1)$, 
where $y=u_{\Bf{k}}^{(1)} = u_{k_{2}}^{(1)} \wedge \cdots \wedge u_{k_{r}}^{(1)}$. 
Thus, it is enough that we prove the pair $(y,u_{t'}^{(2)},s_{1}-1,s_{2})$ is $nm$-dominant. 
From (\ref{eq.26}) and (\ref{eq.28}), 
\begin{align*}
	|y|_{s_{1}-1} + |u_{t'}^{(2)}|_{s_2}
	= |\check{w}^{(1)}|_{s_{1}-1} + |u_{t}^{(2)}|_{s_2} 
	\le s_1 - s_2 -nm .
\end{align*}

\end{proof}
\end{proof}
\end{proof}

\subsection{Bar involution and dominant finite $q$-wedges}

\begin{definition}
Let $v = u_{\Bf{k}} = u_{k_{1}} \wedge u_{k_{2}} \wedge \cdots \wedge u_{k_{r}}$ be a finite $q$-wedge. 
We define a finite $q$-wedge $\overset{\leftarrow}{v}$ as 
$$ 
\overset{\leftarrow}{v} = u_{k_{r}} \wedge u_{k_{r-1}} \wedge \cdots \wedge u_{k_{2}} \wedge u_{k_{1}} . 
$$
\end{definition}

\begin{definition}
Let $1 \le d_{1} < d_{2} \cdots < d_{b} < j < d_{b+1} < \cdots < d_{c} \le \ell$ and 
$v^{(d_{i})} = u_{k^{(i)}_{1}} \wedge u_{k^{(i)}_{2}} \wedge \cdots$ ($1 \le i \le c$) be simple finite $q$-wedges. 
We define the action of $B'_{-m}[j]$ on the finite $q$-wedge 
$v=v^{(d_{1})} \wedge v^{(d_{2})} \wedge \cdots \wedge v^{(d_{b})} \wedge v^{(j)} 
	\wedge v^{(d_{b+1})} \wedge \cdots \wedge v^{(d_{c})}$ as
\begin{equation*}
	B'_{-m}[j] \, v 
	= q^{-bm} v^{(d_{1})} \wedge v^{(d_{2})} \wedge \cdots \wedge v^{(d_{b})} \wedge (B_{-m} \, v^{(j)} ) 
		\wedge v^{(d_{b+1})} \wedge \cdots \wedge v^{(d_{c})} \quad , 
\end{equation*}
where $B_{-m} \, v^{(j)}$ means the level one action of $B_{-m}$ on $v^{(j)}$. i.e. if 
$v^{(j)} = u_{k_{1}}^{(j)} \wedge u_{k_{2}}^{(j)} \wedge \cdots \wedge u_{k_{r}}^{(j)}$, then 
$$
	B_{-m} \, v^{(j)} = \sum_{i=1}^{r} u_{k_{1}}^{(j)} \wedge \cdots \wedge u_{k_{i+nm}}^{(j)} \wedge \cdots \wedge u_{k_{r}}^{(j)} .
$$ 

We also define $B_{-m}[j,\ell]$ on finite $q$-wedges as similarly in Definition \ref{defB}. i.e. 
\begin{align}
	B_{-m}[j,\ell] = \sum_{i = j}^{\ell} q^{(i-j)m} B_{-m}'[i] .
\end{align}
\label{def.6.10}
\end{definition}

{\bf Remark.} 
From \cite[Proposition 5.3.(i)]{U}, if $| \Bf{\lambda} ; \Bf{s} \rangle$ is $nm$-dominant, 
	then the above definition of $B_{-m}'[j]$ and $B_{-m}[j,\ell]$ on finite $q$-wedges 
	coincides with that on $| \Bf{\lambda} ; \Bf{s} \rangle$. 

\begin{lemma}
Let $m \ge 0$ and $1 \le d_{1} < \cdots < d_{b} < j$. 
Let $t,t_{0}$ be two integers such that $t \ge t_{0}$. 
Let $\lambda^{(d_1)} , \lambda^{(d_2)} ,\cdots , \lambda^{(d_b)} \in \Pi$ such that 
$|\lambda^{(d_1)}| + \cdots + |\lambda^{(d_b)}| + |u_{t}^{(j)}|_{s_j} \le s_{d_{i}}-s_j -nm$ for all $1 \le i \le b$. 
For $1 \le i \le b$, put $r_i = s_{d_i}-t_0$ and 
$$
	v^{(d_{i})} = v^{(d_{i})}_{\lambda^{(d_i)} , r_i} 
	= u_{\lambda_1^{(d_i)}+s_{d_i}}^{(d_i)} \wedge u_{\lambda_2^{(d_i)}+s_{d_i}-1}^{(d_i)} \wedge \cdots 
	\wedge u_{\lambda_{r_i}^{(d_i)}+s_{d_i}-r_i+1}^{(d_i)} . 
$$
Set $v = v^{(d_{1})} \wedge v^{(d_{2})} \wedge \cdots v^{(d_{b})}$. 
If $v \wedge u_{t}^{(j)}$ is expressed as 
\begin{align}
	v \wedge u_{t}^{(j)} 
	&= \sum_{t' , y} \alpha(t',y) \, u_{t'}^{(j)} \wedge y \quad , 
\label{eq.33}
\end{align}
where $t' \in \mathbb{Z}$, 
$y = y^{(d_1)} \wedge \cdots \wedge y^{(d_b)}$ 
(for all $1 \le i \le b$, $y^{(d_i)} = u_{\Bf{k}^{(d_i)}}^{(d_i)} \, (\Bf{k}^{(i)} \in \mathbb{Z}^{r_i}))$ 
	and $ \alpha(t',y) \in \mathbb{Q}(q) $. 
Then, 
\begin{itemize}
\item[(i)] For all $1 \le i \le b$, $|v^{(d_i)}|_{s_i} = |\lambda^{(d_i)}|$.
\item[(ii)] For all $t'$ and $y$, if $\alpha(t',y) \ne 0$, then 
	$|y^{(d_1)}| + \cdots + |y^{(d_b)}| + |u_{t'}^{(j)}|_{s_j} \le s_{d_{i}}-s_j -nm$. 
\item[(iii)] 
$\dis{ v \wedge u_{t+nm}^{(j)} = q^{2bm} \sum_{t' , y} \alpha(t',y) \, u_{t'+nm}^{(j)} \wedge y \quad .}$
\item[(iv)] 
$\dis{
	u_{t}^{(j)} \wedge \overset{\leftarrow}{v}= \sum_{t' , y} \overline{\alpha(t',y)} \, \overset{\leftarrow}{y} \wedge u_{t'}^{(j)} 
	\quad , \quad 
	u_{t+nm}^{(j)} \wedge \overset{\leftarrow}{v}
	= q^{-2bm} \sum_{t' , y} \overline{\alpha(t',y)} \, \overset{\leftarrow}{y} \wedge u_{t'+nm}^{(j)} 
	\quad . }$
\item[(v)] 
$\overline{ B_{-m}'[j] \, ( v \wedge u_{t}^{(j)} )} = B_{-m}'[j] \, \overline{ v \wedge u_{t}^{(j)} } $. 

\end{itemize}
\label{corollary4-1}
\end{lemma}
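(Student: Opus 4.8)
The plan is to establish (i)--(v) in the order listed, bootstrapping everything from Lemma~\ref{lemma4}, which is exactly the $b=1$ instance of the first four assertions. For (i) one argues as in the opening lines of the proof of Lemma~\ref{lemma4}: the hypothesis $\sum_{i=1}^{b}|\lambda^{(d_i)}|+|u_t^{(j)}|_{s_j}\le s_{d_k}-s_j-nm$ forces $|\lambda^{(d_i)}|\le r_i-nm$, so $\lambda^{(d_i)}$ fits inside the $r_i$ rows of $v^{(d_i)}_{\lambda^{(d_i)},r_i}$ and hence $|v^{(d_i)}|_{s_{d_i}}=|\lambda^{(d_i)}|$. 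For (ii), every straightening move occurring when $v\wedge u_t^{(j)}$ is brought into the form $\sum\alpha(t',y)\,u_{t'}^{(j)}\wedge y$ exchanges the single runner-$j$ letter with a factor on one of the runners $d_1<\cdots<d_b$; by Corollary~\ref{property1}(i) (see the Remarks after Definition~\ref{def.6.1}) each such move preserves the sum of the sizes on the two runners it touches, so the total $\sum_{i=1}^{b}|y^{(d_i)}|_{s_{d_i}}+|u_{t'}^{(j)}|_{s_j}$ is conserved and equals $\sum_{i=1}^{b}|\lambda^{(d_i)}|+|u_t^{(j)}|_{s_j}$ by (i), which is $\le s_{d_k}-s_j-nm$ by hypothesis.

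Part (iii) is the heart of the matter, and I would prove it by induction on $b$, the base case $b=1$ being Lemma~\ref{lemma4}. For $b>1$, first apply Lemma~\ref{lemma4} to the rightmost block $v^{(d_b)}\wedge u_t^{(j)}$ (legitimate since $r_b=s_{d_b}-t_0$, $t\ge t_0$, and $|\lambda^{(d_b)}|+|u_t^{(j)}|_{s_j}\le s_{d_b}-s_j-nm$), obtaining $v^{(d_b)}\wedge u_t^{(j)}=\sum_{t'',z}\beta(t'',z)\,u_{t''}^{(j)}\wedge z$ together with $v^{(d_b)}\wedge u_{t+nm}^{(j)}=q^{2m}\sum_{t'',z}\beta(t'',z)\,u_{t''+nm}^{(j)}\wedge z$. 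Then $v\wedge u_t^{(j)}=\sum_{t'',z}\beta(t'',z)\,\bigl(v^{(d_1)}\wedge\cdots\wedge v^{(d_{b-1})}\wedge u_{t''}^{(j)}\bigr)\wedge z$, and for every surviving $(t'',z)$ one has $t''\ge t_0$ by Corollary~\ref{property1}(ii) and, by the size conservation used for (ii), $|u_{t''}^{(j)}|_{s_j}\le|\lambda^{(d_b)}|+|u_t^{(j)}|_{s_j}$; hence $\sum_{i=1}^{b-1}|\lambda^{(d_i)}|+|u_{t''}^{(j)}|_{s_j}\le\sum_{i=1}^{b}|\lambda^{(d_i)}|+|u_t^{(j)}|_{s_j}\le s_{d_k}-s_j-nm$ for all $k\le b-1$, so $(v^{(d_1)}\wedge\cdots\wedge v^{(d_{b-1})},u_{t''}^{(j)})$ meets the hypotheses of the case $b-1$. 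Feeding this the inductive hypothesis, together with the shifted identity for $v^{(d_b)}\wedge u_{t+nm}^{(j)}$, yields (iii) with $y=y'\wedge z$ (where $y'$ is the level-$(b-1)$ output) and the clean factor $q^{2m}\cdot q^{2(b-1)m}=q^{2bm}$.

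For (iv) I would apply the bar involution to $v\wedge u_t^{(j)}=\sum\alpha(t',y)\,u_{t'}^{(j)}\wedge y$. By Definition~\ref{barinv} one has $\overline{v\wedge u_t^{(j)}}=C\cdot u_t^{(j)}\wedge\overset{\leftarrow}{v}$ and $\overline{u_{t'}^{(j)}\wedge y}=C_{t',y}\cdot\overset{\leftarrow}{y}\wedge u_{t'}^{(j)}$, where the scalars depend only on the multisets of runner labels and of residues mod $n$ of the constituent letters; both multisets agree for $v\wedge u_t^{(j)}$ and for every summand $u_{t'}^{(j)}\wedge y$ --- the runners trivially, the residues by Corollary~\ref{property1}(iii) --- so $C_{t',y}=C$, and dividing out $C$ gives the first identity of (iv). The second follows the same way from the identity of (iii), using that $t\mapsto t+nm$ changes neither runner nor residue and that $\overline{q^{2bm}}=q^{-2bm}$. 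Finally, for (v): by Definition~\ref{def.6.10}, $B'_{-m}[j](v\wedge u_t^{(j)})=q^{-bm}\,v\wedge u_{t+nm}^{(j)}$ (the $q^{-bm}$ counting the $b$ lower-runner blocks standing to the left of the runner-$j$ letter), so $\overline{B'_{-m}[j](v\wedge u_t^{(j)})}=q^{bm}\,\overline{v\wedge u_{t+nm}^{(j)}}=q^{-bm}\sum\overline{\alpha(t',y)}\,\overline{u_{t'+nm}^{(j)}\wedge y}$ by (iii); while $\overline{v\wedge u_t^{(j)}}=\sum\overline{\alpha(t',y)}\,\overline{u_{t'}^{(j)}\wedge y}$, and applying $B'_{-m}[j]$ termwise --- again $b$ lower-runner blocks precede the runner-$j$ factor in $\overset{\leftarrow}{y}\wedge u_{t'}^{(j)}$, and the scalar identity $C_{t',y}=C$ lets one reassemble each term as $q^{-bm}\overline{u_{t'+nm}^{(j)}\wedge y}$ --- gives the same expression. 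Comparing the two proves (v).

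The real work sits in part (iii): iterating Lemma~\ref{lemma4} requires checking at each stage that the residual runner-$j$ letter $u_{t^{\ast}}^{(j)}$ still obeys both $t^{\ast}\ge t_0$ and the dominance inequality against the remaining blocks, and the latter is precisely where the ``for all $i$'' in the hypothesis is used, since pushing the letter leftward can only increase $|u_{t^{\ast}}^{(j)}|_{s_j}$, and only by (at most) the sizes of the blocks already crossed. A secondary nuisance is the bar-involution scalar bookkeeping in (iv)--(v), but this is entirely controlled by the residue-multiset invariance of Corollary~\ref{property1}(iii) together with the fact that $nm$-shifts preserve residues mod $n$.
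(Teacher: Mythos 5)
Your proposal is correct and follows essentially the same route as the paper: (i) and (ii) by the same size estimate and conservation argument via Corollary \ref{property1}, (iii) by iterating Lemma \ref{lemma4} (the paper dispatches this with ``apply Lemma \ref{lemma4} repeatedly,'' and your induction on $b$ with the $t''\ge t_0$ and dominance checks is exactly the justification it has in mind), and (iv)--(v) by barring (\ref{eq.33}) and (iii) and using that the bar scalar depends only on the runner labels and residues, which is the paper's $A(q)$ bookkeeping in an equivalent form. No gaps.
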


\begin{proof}
For simplicity, we assume that $d_{1} = 1, d_{2} = 2, \cdots , d_{b} =b$. 

For all $1 \le i \le b$, we have 
\begin{align*}
	|\lambda^{(i)}| 
	&\le |\lambda^{(1)}| + \cdots + |\lambda^{(b)}| \le s_{i}-s_j -nm - |u_{t}^{(j)}|_{s_j} 
	=s_{i}-s_j -nm - t+s_j 
	\le s_i-t_0-nm
	=r_i-nm. 
\end{align*}
This proves (i). 

From Corollary \ref{property1} (i) (see also the remark after Definition \ref{def.6.1} ) and (i), we have 
\begin{align*}
	|y^{(1)}| + \cdots + |y^{(b)}| + |u_{t'}^{(j)}|_{s_j} 
	= |v^{(1)}| + \cdots + |v^{(b)}| + |u_{t'}^{(j)}|_{s_j} 
	= |\lambda^{(1)}| + \cdots + |\lambda^{(b)}| + |u_{t}^{(j)}|_{s_j} \le s_{i}-s_j -nm
\end{align*}
for all $1 \le i \le b$. 
This proves (ii). 

From (ii), we can apply Lemma \ref{lemma4} repeatedly and prove (iii).

Put $\overline{v \wedge u_{t}^{(j)}} = A(q) \, u_{t}^{(j)} \wedge \overset{\leftarrow}{v}$. 
Then, by the definition of bar involution (Definition \ref{barinv}) and Corollary \ref{property1} (iii), 
$$\overline{v \wedge u_{t+nm}^{(j)}} = A(q) \, u_{t+nm}^{(j)} \wedge \overset{\leftarrow}{v} \quad , \quad  
	\overline{y \wedge u_{t'}^{(j)}} = A(q) \, u_{t'}^{(j)} \wedge \overset{\leftarrow}{y} \quad \text{ and } \quad 
	\overline{y \wedge u_{t'+nm}^{(j)}} = A(q) \, u_{t'+nm}^{(j)} \wedge \overset{\leftarrow}{y}$$ 
	if $\alpha(t',y) \ne 0$. 
Thus, we obtain (iv) by taking bar involution of (\ref{eq.33}) and (iii). 

Finally, we prove (v). 
Put $\overline{v \wedge u_{t}^{(j)}} = A(q) \, u_{t}^{(j)} \wedge \overset{\leftarrow}{v}$. 
\begin{align*}
	B_{-m}'[j] \, \overline{v \wedge u_{t}^{(j)}}
	&= B_{-m}'[j] \, \left( A(q) \, u_{t}^{(j)} \wedge \overset{\leftarrow}{v} \right) \\
	&= A(q) \, B_{-m}'[j] \,\left( \sum_{t' , y} \overline{\alpha(t',y)} \, \overset{\leftarrow}{y} \wedge u_{t'}^{(j)} \right) 
		\hspace{3em} \text{(By (iv))} \\
	&= A(q) \, q^{-bm} \, \sum_{t' , y} \overline{\alpha(t',y)} \, \overset{\leftarrow}{y} \wedge (B_{-m}\,u_{t'}^{(j)}) 
		\hspace{3em} \text{(By Definition \ref{def.6.10})} \\
	&= A(q) \, q^{-bm} \, \sum_{t' , y} \overline{\alpha(t',y)} \, \overset{\leftarrow}{y} \wedge u_{t'+nm}^{(j)} .
\end{align*}
On the other hand, 
\begin{align*}
	\overline{B_{-m}'[j] \, \left( v \wedge u_{t}^{(j)} \right)}
	&= \overline{q^{-bm} \, v \wedge  \left( B_{-m} \, u_{t}^{(j)} \right) } 
	= \overline{q^{-bm} \, v \wedge u_{t+nm}^{(j)} } 
		\hspace{3em} \text{(By Definition \ref{def.6.10})} \\
	&= q^{bm} \, A(q) \, u_{t+nm}^{(j)} \wedge \overset{\leftarrow}{v} \\
	&= q^{bm} \, A(q) \, q^{-2bm} \left( \sum_{t' , y} \overline{\alpha(t',y)} \, \overset{\leftarrow}{y} \wedge u_{t'+nm}^{(j)} \right) 
		\hspace{3em} \text{(By (iv))} \\
	&= A(q) \, q^{-bm} \, \sum_{t' , y} \overline{\alpha(t',y)} \, \overset{\leftarrow}{y} \wedge u_{t'+nm}^{(j)} .
\end{align*}
\end{proof}

\begin{corollary}
Let $m \ge 0$ and $1 \le d_{1} < \cdots < d_{b} < j$. 
Let $N$ be a positive integer and $t_{0}$ be an integer such that $s_j - N \ge t_0$. 
Let $\lambda^{(d_1)} , \lambda^{(d_2)} ,\cdots , \lambda^{(d_b)} , \lambda^{(j)} \in \Pi$ such that 
$|\lambda^{(d_1)}| + \cdots + |\lambda^{(d_b)}| + |\lambda^{(j)}| \le s_{d_{i}}-s_j -nm$ for all $1 \le i \le b$. 
Put 
$$
w^{(j)} = v^{(j)}_{\lambda^{(j)} , N } 
	= u_{\lambda_1^{(j)}+s_{j}}^{(j)} \wedge u_{\lambda_2^{(j)}+s_{j}-1}^{(j)} \wedge \cdots 
	\wedge u_{\lambda_{N}^{(j)}+s_{j}-N+1}^{(j)} . 
$$
For $1 \le i \le b$, put $r_i = s_{d_i}-t_0$ and 
$$
	v^{(d_{i})} = v^{(d_{i})}_{\lambda^{(d_i)} , r_i} 
	= u_{\lambda_1^{(d_i)}+s_{d_i}}^{(d_i)} \wedge u_{\lambda_2^{(d_i)}+s_{d_i}-1}^{(d_i)} \wedge \cdots 
	\wedge u_{\lambda_{r_i}^{(d_i)}+s_{d_i}-r_i+1}^{(d_i)} . 
$$
Set $v = v^{(d_{1})} \wedge v^{(d_{2})} \wedge \cdots v^{(d_{b})}$. 
Then, 
$$ \overline{ B_{-m}'[j] \, ( v \wedge w^{(j)} )} = B_{-m}'[j] \, \overline{ v \wedge w^{(j)} } . $$ 
\label{corollary6}
\end{corollary}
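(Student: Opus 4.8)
The plan is to prove Corollary~\ref{corollary6} by the same strategy as the proof of Lemma~\ref{corollary4-1}(v), with the single factor $u_t^{(j)}$ there replaced throughout by the whole level-$j$ block $w^{(j)}=v^{(j)}_{\lambda^{(j)},N}$. Two elementary facts about the finite-$q$-wedge action of Definition~\ref{def.6.10} will be used constantly: on a simple level-$j$ wedge $B_{-m}$ is a derivation for $\wedge$, i.e.\ $B_{-m}(x\wedge y)=(B_{-m}x)\wedge y+x\wedge(B_{-m}y)$; and reversal intertwines it, $\overset{\leftarrow}{(B_{-m}x)}=B_{-m}(\overset{\leftarrow}{x})$. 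Moreover, by Definition~\ref{barinv} and Corollary~\ref{property1}(iii), the bar involution, the straightening rules, and the shift $u_k^{(j)}\mapsto u_{k+nm}^{(j)}$ all preserve the pattern of level indices of a $q$-wedge; consequently the bar involution sends every $q$-wedge whose level pattern is that of $v\wedge w^{(j)}$ — the levels $d_1,\dots,d_b$ of $v$ followed by $N$ copies of $j$, or the reversal of this list — to one fixed monomial $\epsilon$ times its reversal.

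For the left-hand side I argue directly. Since $B'_{-m}[j](v\wedge w^{(j)})=q^{-bm}\,v\wedge(B_{-m}w^{(j)})$, where $b$ denotes the number of blocks $v^{(d_1)},\dots,v^{(d_b)}$ of $v$, applying the bar involution and the identity $\overset{\leftarrow}{(B_{-m}w^{(j)})}=B_{-m}(\overset{\leftarrow}{w^{(j)}})$ gives $\overline{B'_{-m}[j](v\wedge w^{(j)})}=q^{bm}\,\epsilon\,(B_{-m}\overset{\leftarrow}{w^{(j)}})\wedge\overset{\leftarrow}{v}$.

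For the right-hand side, $\overline{v\wedge w^{(j)}}=\epsilon\,\overset{\leftarrow}{w^{(j)}}\wedge\overset{\leftarrow}{v}$, in which the level-$j$ block $\overset{\leftarrow}{w^{(j)}}$ stands in front of the blocks $\overset{\leftarrow}{v}$, so Definition~\ref{def.6.10} does not apply until that block has been moved past $\overset{\leftarrow}{v}$. That step is governed by a block version of Lemma~\ref{corollary4-1}(iv): writing the straightened expansion $v\wedge w^{(j)}=\sum\alpha\,z\wedge y$ with each $z$ a simple level-$j$ wedge and each $y$ a product of wedges of levels among $d_1,\dots,d_b$, its mirror image under the bar involution reads $\overset{\leftarrow}{w^{(j)}}\wedge\overset{\leftarrow}{v}=\sum\overline{\alpha}\,\overset{\leftarrow}{y}\wedge\overset{\leftarrow}{z}$, and on each such term $B'_{-m}[j]$ acts by $q^{-bm}$ times the $B_{-m}$-shift of the level-$j$ block $\overset{\leftarrow}{z}$. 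Combined with the companion block version of Lemma~\ref{corollary4-1}(iii) — replacing one factor $u_k^{(j)}$ of $w^{(j)}$ by $u_{k+nm}^{(j)}$ inside $v\wedge w^{(j)}$ multiplies each coefficient $\alpha$ by $q^{2bm}$ — one gets $(B_{-m}\overset{\leftarrow}{w^{(j)}})\wedge\overset{\leftarrow}{v}=q^{-2bm}\sum\overline{\alpha}\,\overset{\leftarrow}{y}\wedge(B_{-m}\overset{\leftarrow}{z})$, so that $B'_{-m}[j]\,\overline{v\wedge w^{(j)}}=\epsilon\,q^{-bm}\sum\overline{\alpha}\,\overset{\leftarrow}{y}\wedge(B_{-m}\overset{\leftarrow}{z})=q^{bm}\,\epsilon\,(B_{-m}\overset{\leftarrow}{w^{(j)}})\wedge\overset{\leftarrow}{v}$, which agrees with the left-hand side. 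What remains is to establish the two block identities; these follow by iterating the single-factor exchange Lemma~\ref{lemma4} factor by factor of $w^{(j)}$ and block by block of $v$, and the domination hypotheses of the corollary are exactly what makes these iterations legitimate, since by Lemma~\ref{lemma2} (used as in the proof of Lemma~\ref{lemma4}) the terms that would otherwise obstruct each exchange vanish.

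The step I expect to be the genuine obstacle is precisely this last one: setting up the block analogues of parts (iii)--(iv) of Lemma~\ref{corollary4-1} and verifying that, once $B'_{-m}[j]$ has been applied, the factor $q^{-2bm}$ coming from dragging the entire $N$-factor block $w^{(j)}$ through $\overset{\leftarrow}{v}$, the factor $q^{-bm}$ coming from Definition~\ref{def.6.10}, and the bar monomial $\epsilon$ recombine into an honest identity. The derivation property of $B_{-m}$ together with the reversal identity $\overset{\leftarrow}{(B_{-m}x)}=B_{-m}(\overset{\leftarrow}{x})$ keeps the bookkeeping cleanly parallel to the single-factor case treated in Lemma~\ref{corollary4-1}(v), worked out one factor of $w^{(j)}$ at a time, so that nothing genuinely new intervenes beyond this careful power-of-$q$ accounting.
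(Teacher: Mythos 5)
Your proposal is correct in substance and ends at the same reduction as the paper, but it is organized differently. The paper proves Corollary \ref{corollary6} by induction on $N$: it peels off the last factor $u_{t_N}^{(j)}$ of $w^{(j)}$, applies the inductive hypothesis to $v\wedge\tilde{w}^{(j)}$, and treats the remaining single-factor exchange with Lemma \ref{corollary4-1}(iv) applied to the level-$d$ descendants $x$ appearing in the expansion of $v\wedge\tilde{w}^{(j)}$, after which both sides collapse to one common expression. You instead avoid the top-level induction and work with the whole level-$j$ block at once, which forces you to posit block analogues of Lemma \ref{corollary4-1}(iii)--(iv); granting these, your bookkeeping of $q^{-bm}$, $q^{\pm 2bm}$ and the common bar monomial $\epsilon$ is right (note that the constancy of $\epsilon$ uses not only the level pattern but also the invariance of the residue multiset, i.e.\ Corollary \ref{property1}(iii) together with the fact that the shift by $nm$ preserves residues), and you correctly postpone applying Definition \ref{def.6.10} until the level-$j$ block has been moved to the right of the level-$d$ blocks, exactly as the paper does. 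The genuine content is then your block version of (iii), and there the phrase ``iterating Lemma \ref{lemma4} factor by factor'' conceals an induction on the factors of $w^{(j)}$ that is the counterpart of the paper's induction on $N$: once the first level-$j$ factor has passed through $v$, the level-$d$ blocks are no longer of the special shape $v^{(d_i)}_{\mu,r_i}$ required by Lemma \ref{lemma4}, so before each subsequent exchange you must restraighten them internally (harmless, since this never involves the level-$j$ factors and hence is identical on the shifted and unshifted sides), observe that any ordered level-$d_i$ wedge of length $r_i=s_{d_i}-t_0$ with entries above $t_0$ is again of the form $v^{(d_i)}_{\mu,r_i}$, and recheck the dominance hypothesis by the size-conservation argument of Lemma \ref{corollary4-1}(i)--(ii). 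This is the same kind of implicit step the paper itself takes when it invokes Lemma \ref{corollary4-1}(iv) for the descendants $x$ in (\ref{eq.37})--(\ref{eq.38}), so your argument is at a comparable level of rigor; what the paper's induction buys is that it only ever needs the single-factor statements verbatim, while your route buys a single non-inductive computation at the top at the price of a block exchange lemma that must itself be established by the very iteration you sketch.
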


\begin{proof}
We prove the assertion by induction on $N$. If $N=1$, then the assertion follows from Lemma \ref{corollary4-1} (v). 

Suppose that $N>1$. 
For $1 \le i \le N$, put $t_i=\lambda^{(j)}+s_j-i+1$ and 
$\tilde{w}^{(j)} = v^{(j)}_{\lambda^{(j)} , N-1 } = u_{t_1}^{(j)} \wedge u_{t_2}^{(j)} \wedge \cdots \wedge u_{t_{N-1}}^{(j)}$. 
Then, $t_i > t_0$ for any $1 \le i \le N$ and 
	$|\tilde{w}^{(j)}|_{s_j} = \lambda^{(j)}_1 + \cdots + \lambda^{(j)}_{N-1} \le |\lambda^{(j)}|$. 
By induction hypothesis, 
\begin{align}
	\overline{ B_{-m}'[j] \, ( v \wedge \tilde{w}^{(j)} )} = B_{-m}'[j] \, \overline{ v \wedge \tilde{w}^{(j)} } . 
\label{eq.34}
\end{align}
Put $\overline{v \wedge \tilde{w}^{(j)} \wedge u_{t_N}^{(j)}} = A(q) \, u_{t_N}^{(j)} \wedge \overline{v \wedge \tilde{w}^{(j)}}$ and 
$\overline{v \wedge \tilde{w}^{(j)}} = C(q) \, \tilde{w}^{(j)} \wedge \overset{\leftarrow}{v}$. 
Set the expansion of $v \wedge \tilde{w}^{(j)}$ as 
\begin{align}
	v \wedge \tilde{w}^{(j)} 
	&= \sum_{x,y^{(j)}} \alpha(x,y^{(j)}) \, y^{(j)} \wedge x \quad , 
\label{eq.35}
\end{align}
where $x = x^{(d_1)} \wedge \cdots \wedge x^{(d_b)}$ 
	(for all $1 \le i \le b$, $x^{(d_i)} = u_{\Bf{k}^{(d_i)}}^{(d_i)} \, (\Bf{k}^{(i)} \in \mathbb{Z}^{r_i}))$, 
	$y^{(j)} = u_{\Bf{k}}^{(j)} \, (\Bf{k} \in \mathbb{Z}^{N-1})$ and 
	$ \alpha(x,y^{(j)}) \in \mathbb{Q}(q) $. 
Then, 
\begin{align}
	\overline{ v \wedge \tilde{w}^{(j)} }
	&= C(q) \sum_{x,y^{(j)}} \overline{\alpha(x,y^{(j)})} \, \overset{\leftarrow}{x} \wedge \overset{\leftarrow}{y^{(j)}} . 
\label{eq.36}
\end{align}
Also set the expansion of $x \wedge u_{t_N}^{(j)}$ as 
\begin{align}
	x \wedge u_{t_N}^{(j)} 
	&= \sum_{t',z} \beta(t',z) \, u_{t'}^{(j)} \wedge z \quad , 
\label{eq.37}
\end{align}
where $z = z^{(d_1)} \wedge \cdots \wedge z^{(d_b)}$ 
	(for all $1 \le i \le b$, $z^{(d_i)} = u_{\Bf{g}^{(d_i)}}^{(d_i)} \, (\Bf{g}^{(i)} \in \mathbb{Z}^{r_i}))$ and 
	$ \beta(t',z) \in \mathbb{Q}(q) $. 
Then, from Lemma \ref{corollary4-1} (iv) 
\begin{align}
	u_{t_N}^{(j)} \wedge \overset{\leftarrow}{x} = \sum_{t',z} \overline{\beta(t',z)} \, \overset{\leftarrow}{z} \wedge u_{t'}^{(j)} 
	\quad , \quad 
	u_{t_N+nm}^{(j)} \wedge \overset{\leftarrow}{x} 
		= q^{-2bm} \sum_{t',z} \overline{\beta(t',z)} \, \overset{\leftarrow}{z} \wedge u_{t'+nm}^{(j)}  
	\quad . 
\label{eq.38}
\end{align}

Under the above preparation, 
\begin{align*}
	&\overline{ B_{-m}'[j] \, ( v \wedge w^{(j)} )} \\
	&= \overline{ B_{-m}'[j] \, ( v \wedge \tilde{w}^{(j)} \wedge u_{t_N}^{(j)} )} \\
	&= \overline{ q^{-bm} \, v \wedge \left( B_{-m} \tilde{w}^{(j)} \right) \wedge u_{t_N}^{(j)} }
		+ \overline{ q^{-bm} \, v \wedge \tilde{w}^{(j)} \wedge u_{t_N+nm}^{(j)} } \\
	&= q^{bm}A(q) \, u_{t_N}^{(j)} \wedge \overline{v \wedge \left( B_{-m} \tilde{w}^{(j)} \right)} 
		+ q^{bm}A(q) \, u_{t_N+nm}^{(j)} \wedge \overline{v \wedge \tilde{w}^{(j)}} \\
	&= A(q) \, u_{t_N}^{(j)} \wedge \left( B_{-m}'[j] \overline{v \wedge \tilde{w}^{(j)} } \right) 
		+ q^{bm}A(q) \, u_{t_N+nm}^{(j)} \wedge \overline{v \wedge \tilde{w}^{(j)}} 
			\hspace{2em} \text{ (By (\ref{eq.34})) } \\
	&= A(q) C(q) \, u_{t_N}^{(j)} \wedge 
		B_{-m}'[j] \left( \sum_{x,y^{(j)}} \overline{\alpha(x,y^{(j)})} \, \overset{\leftarrow}{x} \wedge \overset{\leftarrow}{y^{(j)}} \right) 
		+ q^{bm} A(q) C(q) \, u_{t_N+nm}^{(j)} \wedge 
		\left( \sum_{x,y^{(j)}} \overline{\alpha(x,y^{(j)})} \, \overset{\leftarrow}{x} \wedge \overset{\leftarrow}{y^{(j)}} \right) \\ 	&		\hspace{10em} \text{ (By (\ref{eq.36})) } \\
	&= q^{-bm} A(q) C(q) \sum_{x,y^{(j)}} \overline{\alpha(x,y^{(j)})} \, 
		u_{t_N}^{(j)} \wedge \overset{\leftarrow}{x} \wedge \left( B_{-m} \overset{\leftarrow}{y^{(j)}} \right)
		+ q^{bm} A(q) C(q) \sum_{x,y^{(j)}} \overline{\alpha(x,y^{(j)})} \, 
		u_{t_N+nm}^{(j)} \wedge \overset{\leftarrow}{x} \wedge \overset{\leftarrow}{y^{(j)}} \\
	&= q^{-bm} A(q) C(q) \sum_{x,y^{(j)}} \sum_{t',z} \overline{\alpha(x,y^{(j)})} \overline{\beta(t',z)} \, 
		\overset{\leftarrow}{z} \wedge u_{t'}^{(j)} \wedge \left( B_{-m} \overset{\leftarrow}{y^{(j)}} \right) \\
	&	\hspace{3em} + q^{-bm} A(q) C(q) \sum_{x,y^{(j)}} \sum_{t',z} \overline{\alpha(x,y^{(j)})} \overline{\beta(t',z)} \, 
		\overset{\leftarrow}{z} \wedge u_{t'+nm}^{(j)} \wedge \overset{\leftarrow}{y^{(j)}} 
		\hspace{3em} \text{ (By (\ref{eq.38})) } \\
	&= A(q) C(q) \sum_{x,y^{(j)}} \sum_{t',z} \overline{\alpha(x,y^{(j)})} \overline{\beta(t',z)} \, 
		B_{-m}'[j] \, \left( \overset{\leftarrow}{z} \wedge u_{t'}^{(j)} \wedge \overset{\leftarrow}{y^{(j)}} \right) .
\end{align*}
On the other hand, 
\begin{align*}
	&B_{-m}'[j] \, \overline{v \wedge w^{(j)}} \\
	&= B_{-m}'[j] \, \overline{ v \wedge \tilde{w}^{(j)} \wedge u_{t_N}^{(j)} } \\
	&= B_{-m}'[j] \, \left( A(q) \, u_{t_N}^{(j)} \wedge \overline{ v \wedge \tilde{w}^{(j)} } \right) \\
	&= A(q) C(q) B_{-m}'[j] \, \left( \sum_{x,y^{(j)}} \overline{\alpha(x,y^{(j)})} \, 
		u_{t_N}^{(j)} \wedge \overset{\leftarrow}{x} \wedge \overset{\leftarrow}{y^{(j)}} \right) 
		\hspace{3em} \text{ (By (\ref{eq.36})) } \\
	&= A(q) C(q) \sum_{x,y^{(j)}} \sum_{t',z} \overline{\alpha(x,y^{(j)})} \overline{\beta(t',z)} \, 
		B_{-m}'[j] \, \left( \overset{\leftarrow}{z} \wedge u_{t'}^{(j)} \wedge \overset{\leftarrow}{y^{(j)}} \right) 
		\hspace{3em} \text{ (By (\ref{eq.38})) } .
\end{align*}
\end{proof}

\begin{corollary}
Let $m \ge 0$, $1 \le j \le \ell$, $t_0 \in \mathbb{Z}$ and $\Bf{\lambda} \in \Pi^{\ell}$. 
Suppose that $| \Bf{\lambda} ; \Bf{s} \rangle$ is $nm$-dominant and $t_0 \le s_{\ell} - l(\lambda^{(\ell)}) $.
For $1 \le i \le \ell$, put $r_i = s_{i}-t_0$ and 
$$
	v^{(i)} = v^{(i)}_{\lambda^{(i)} , r_i} 
	= u_{\lambda_1^{(i)}+s_{i}}^{(i)} \wedge u_{\lambda_2^{(i)}+s_{i}-1}^{(i)} \wedge \cdots 
	\wedge u_{\lambda_{r_i}^{(i)}+s_{i}-r_i+1}^{(i)} . 
$$
Set $v = v^{(1)} \wedge v^{(2)} \wedge \cdots v^{(j-1)}$ and 
	$w = v^{(j)} \wedge v^{(j+1)} \wedge \cdots v^{(\ell)}$. 
Then, 
$$ \overline{ B_{-m}[j,\ell] \, ( v \wedge w )} = B_{-m}[j,\ell] \, \overline{ v \wedge w } . $$ 
\label{corollary7}
\end{corollary}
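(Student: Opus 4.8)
The plan is to prove the identity by induction on $\ell - j$, using Corollary \ref{corollary6} as the base case. When $j=\ell$ we have $B_{-m}[\ell,\ell]=B'_{-m}[\ell]$ and $w=v^{(\ell)}$ is a single simple finite $q$-wedge in the rightmost sector, so the assertion is precisely Corollary \ref{corollary6} with $(d_1,\dots,d_b)=(1,\dots,\ell-1)$; its hypothesis $|\lambda^{(1)}|+\cdots+|\lambda^{(\ell)}|\le s_i-s_\ell-nm$ follows from the $nm$-dominance of $|\Bf{\lambda};\Bf{s}\rangle$ since $s_i-s_\ell=\sum_{k=i}^{\ell-1}(s_k-s_{k+1})\ge nm+|\Bf{\lambda}|$.

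For the inductive step ($j<\ell$) I would first record the purely formal identity $B_{-m}[j,\ell]=B'_{-m}[j]+q^{m}\,B_{-m}[j+1,\ell]$, which is immediate from Definition \ref{defB}. Writing $v\wedge w=(v\wedge v^{(j)})\wedge(v^{(j+1)}\wedge\cdots\wedge v^{(\ell)})$ and applying the inductive hypothesis with $v\wedge v^{(j)}$ as left part and $v^{(j+1)}\wedge\cdots\wedge v^{(\ell)}$ as right part gives $\overline{B_{-m}[j+1,\ell](v\wedge w)}=B_{-m}[j+1,\ell]\,\overline{v\wedge w}$. Since $\overline{q^{m}X}=q^{-m}\overline{X}$, the desired commutation for $B_{-m}[j,\ell]$ is then equivalent to the single defect identity
\begin{equation*}
\overline{B'_{-m}[j]\,(v\wedge w)}\;=\;B'_{-m}[j]\,\overline{v\wedge w}\;+\;(q^{m}-q^{-m})\,B_{-m}[j+1,\ell]\,\overline{v\wedge w},\tag{$\star$}
\end{equation*}
which says that $B'_{-m}[j]$ fails to commute with the bar involution only by the explicit lower-sector term on the right. (Note that $B_{-m}[1,j-1]$ itself does \emph{not} commute with the bar involution, so one genuinely has to produce $(\star)$ and cannot simply write $B_{-m}[j,\ell]$ as $q^{-(j-1)m}(B_{-m}-B_{-m}[1,j-1])$ and quote Proposition \ref{barcommute}.)

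To establish $(\star)$ I would argue inside the finite $q$-wedge formalism of Section 5. Expand $B'_{-m}[j](v\wedge w)=q^{-(j-1)m}\,v^{(1)}\wedge\cdots\wedge v^{(j-1)}\wedge(B_{-m}v^{(j)})\wedge v^{(j+1)}\wedge\cdots\wedge v^{(\ell)}$ by Definition \ref{def.6.10}, apply the bar involution (which reverses the entire $q$-wedge), and then move the raised entry of the sector-$j$ block back past the reversed blocks $\overset{\leftarrow}{v^{(j+1)}},\dots,\overset{\leftarrow}{v^{(\ell)}}$ that the reversal has pushed to its left. The exchanges that occur are exactly those controlled by the exchange Lemma \ref{lemma4} and the deletion Lemmas \ref{lemma1} and \ref{lemma2}: on crossing a dominant block $v^{(i)}$ with $i>j$ the raised entry is either annihilated or passes into sector $i$ carrying a factor $q^{2m}$ per block crossed, so that summing these contributions converts the single weight $q^{-(j-1)m}$ into the family of weights $q^{(i-j)m}$ $(j+1\le i\le\ell)$ occurring in $B_{-m}[j+1,\ell]$, while the $q^{2m}$-discrepancy of Lemma \ref{lemma4} between ``raise then straighten'' and ``straighten then raise'' produces precisely the coefficient $q^{m}-q^{-m}$. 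Making this rigorous means proving, in parallel with Lemma \ref{corollary4-1} and Corollary \ref{corollary6} but now allowing a nonempty right part $v^{(j+1)}\wedge\cdots\wedge v^{(\ell)}$, that sizes and $M$-dominance of the relevant pairs are preserved, that the bar of such a configuration factors with a computable power of $q$, and that the only ``excess'' straightening terms are those collected by $(q^{m}-q^{-m})B_{-m}[j+1,\ell]\,\overline{v\wedge w}$; one then runs an induction on the length of $w$, peeling the bottom entry off $w$ as in the proof of Corollary \ref{corollary6}. The $nm$-dominance hypothesis (and, in the eventual application, the freedom to take $\lambda^{(j)}$ $n$-restricted, together with the fact that $\overline{v\wedge w}$ is again a combination of $nm$-dominant vectors) is what keeps all these pairs $nm$-dominant, so that the deletion lemmas apply and the unwanted terms vanish.

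The hard part is exactly this last computation, i.e.\ the $q$-power bookkeeping: one must check that the weights $q^{(i-j)m}$ hard-wired into $B_{-m}[j,\ell]$ transform under the sector-reversal performed by the bar involution, and after the exchange moves of Lemma \ref{lemma4}, so as to reassemble precisely into $B_{-m}[j,\ell]$ acting on $\overline{v\wedge w}$ — with the genuine-boson relation $\overline{B_{-m}u}=B_{-m}\overline{u}$ of Proposition \ref{barcommute} governing the ``diagonal'' contribution and no spurious terms left over.
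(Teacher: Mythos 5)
Your base case and your reduction of the inductive step are correct as far as they go: with $B_{-m}[j,\ell]=B'_{-m}[j]+q^{m}B_{-m}[j+1,\ell]$ and the inductive hypothesis $\overline{B_{-m}[j+1,\ell]\,(v\wedge w)}=B_{-m}[j+1,\ell]\,\overline{v\wedge w}$, the assertion for $j$ is indeed equivalent to your defect identity $(\star)$. But that is exactly where the genuine gap lies: inside your induction, $(\star)$ carries the entire content of the corollary at step $j$, and you do not prove it — you describe a mechanism and then say that making it rigorous requires proving analogues of Lemma \ref{corollary4-1} and Corollary \ref{corollary6} "allowing a nonempty right part", adding that the hard part is exactly this last computation. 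Deferring that computation leaves the statement unproved. Moreover, the lemmas you invoke do not cover the configuration your sketch needs: Lemma \ref{lemma2}, Lemma \ref{lemma4} and Lemma \ref{corollary4-1} are all stated for a single entry of a sector $d_2$ (lower charge) being deleted against, or exchanged with, full dominant blocks in sectors $d_1<d_2$ (higher charge), whereas after applying the bar involution your raised sector-$j$ entry must be straightened past the reversed blocks $\overset{\leftarrow}{v^{(j+1)}},\dots,\overset{\leftarrow}{v^{(\ell)}}$, i.e.\ past blocks of \emph{lower} charge; no lemma in Section 5 is stated for that relative position. So the asserted bookkeeping ("a factor $q^{2m}$ per block crossed, reassembling into $(q^{m}-q^{-m})B_{-m}[j+1,\ell]\,\overline{v\wedge w}$") is precisely the unproved content, not a consequence of the quoted results.

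This is also where your route diverges from the paper. The paper obtains Corollary \ref{corollary7} by iterating Corollary \ref{corollary6}, whose hypotheses always keep the sector being acted on rightmost in the wedge, so that $B'_{-m}[i]$ is only ever commuted past blocks in lower-indexed (higher-charge) sectors — the situation the Section 5 machinery does treat. Your decomposition $B_{-m}[j,\ell]=B'_{-m}[j]+q^{m}B_{-m}[j+1,\ell]$ instead forces you to control $B'_{-m}[j]$ on a wedge that still contains the sectors $j+1,\dots,\ell$ to its right, a situation in which $B'_{-m}[j]$ genuinely fails to commute with the bar involution (as you yourself note), and supplying the resulting correction term is where the proposal stops. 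To complete your argument you would have to formulate and prove the missing exchange and deletion lemmas for a higher-charge entry crossing lower-charge dominant blocks, or reorganize the induction so that, as in the paper, only the rightmost sector of the wedge is ever acted on.
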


\begin{proof}
By applying Corollary \ref{corollary6} repeatedly, we obtain the assertion. 
\end{proof}

\begin{proof}[Proof of Proposition \ref{mainconj}]
The statement (ii) follows from Corollary \ref{corollary7}. 
We prove (i). 
For convenience, we define $B_{-m}[\ell+1,\ell] = B_{-m}[\ell+2,\ell] = 0$ and $B_{-m}'[\ell+1]=0$.

From $B_{-m}[j,\ell] =\sum_{i=j}^{\ell} q^{(i-j)m} B_{-m}'[i]$, we have 
\begin{align*}
	B_{-m}'[j] 
	&= B_{-m}[j,\ell] - \sum_{i=j+1}^{\ell} q^{(i-j)m} B_{-m}'[i] \\
	&= B_{-m}[j,\ell] - q^{m} \sum_{i=j+1}^{\ell} q^{(i-j-1)m} B_{-m}'[i] \\
	&= B_{-m}[j,\ell] - q^{m} B_{-m}[j+1,\ell] \quad .
\end{align*}
Thus, 
\begin{align*}
	B_{-m}[j] 
	&= B_{-m}'[j] -q^{-m} B_{-m}'[j+1] \\
	&= B_{-m}[j,\ell] - q^{m} B_{-m}[j+1,\ell] -q^{-m} B_{-m}[j+1,\ell] + B_{-m}[j+2,\ell] \\
	&= B_{-m}[j,\ell] - (q^{m}+q^{-m}) B_{-m}[j+1,\ell] + B_{-m}[j+2,\ell] \quad .
\end{align*}
Hence from (ii), 
\begin{align*}
	\overline{ B_{-m}[j] \, u } 
	&= \overline{ B_{-m}[j,\ell] \, u } - (q^{m}+q^{-m}) \overline{ B_{-m}[j+1,\ell] \, u } + \overline{ B_{-m}[j+2,\ell] \, u } \\
	&= B_{-m}[j,\ell] \, \overline{u} - (q^{m}+q^{-m}) B_{-m}[j+1,\ell] \, \overline{u} + B_{-m}[j+2,\ell] \, \overline{u} 
	= B_{-m}[j] \, \overline{u} \quad .
\end{align*}

\end{proof}

%
%


\begin{bibdiv}
\begin{biblist}

\bib{A}{article}{
   author={Ariki, Susumu},
   title={Graded $q$-Schur algebras}
   journal={mathArXiv 0903.3453},
   volume={},
   date={},
   number={},
   pages={},
   issn={},
   review={},
}


\bib{GGOR}{article}{
   author={Ginzburg, Victor},
   author={Guay, Nicolas},
   author={Opdam, Eric},
   author={Rouquier, Rapha{\"e}l},
   title={On the category $\scr O$ for rational Cherednik algebras},
   journal={Invent. Math.},
   volume={154},
   date={2003},
   number={3},
   pages={617--651},
   issn={0020-9910},
   review={\MR{2018786 (2005f:20010)}},
   doi={10.1007/s00222-003-0313-8},
}

\bib{I}{article}{
   author={Iijima, Kazuto},
   title={A comparison of $q$-decomposition numbers in the $q$-deformed Fock
   spaces of higher levels},
   journal={J. Algebra},
   volume={351},
   date={2012},
   pages={426--447},
   issn={0021-8693},
   review={\MR{2862217}},
   doi={10.1016/j.jalgebra.2011.09.035},
}

\bib{JMMO}{article}{
   author={Jimbo, Michio},
   author={Misra, Kailash C.},
   author={Miwa, Tetsuji},
   author={Okado, Masato},
   title={Combinatorics of representations of $U_q(\widehat{{\germ
   s}{\germ l}}(n))$ at $q=0$},
   journal={Comm. Math. Phys.},
   volume={136},
   date={1991},
   number={3},
   pages={543--566},
   issn={0010-3616},
   review={\MR{1099695 (93a:17015)}},
}

\bib{LT}{article}{
   author={Leclerc, Bernard},
   author={Thibon, Jean-Yves},
   title={Littlewood-Richardson coefficients and Kazhdan-Lusztig
   polynomials},
   conference={
      title={Combinatorial methods in representation theory},
      address={Kyoto},
      date={1998},
   },
   book={
      series={Adv. Stud. Pure Math.},
      volume={28},
      publisher={Kinokuniya},
      place={Tokyo},
   },
   date={2000},
   pages={155--220},
   review={\MR{1864481 (2002k:20014)}},
}

\bib{Lus}{article}{
   author={Lusztig, G.},
   title={Modular representations and quantum groups},
   conference={
      title={Classical groups and related topics},
      address={Beijing},
      date={1987},
   },
   book={
      series={Contemp. Math.},
      volume={82},
      publisher={Amer. Math. Soc.},
      place={Providence, RI},
   },
   date={1989},
   pages={59--77},
   review={\MR{982278 (90a:16008)}},
}

\bib{R2}{article}{
   author={Rouquier, Rapha{\"e}l},
   title={$q$-Schur algebras and complex reflection groups},
   journal={Mosc. Math. J.},
   volume={8},
   date={2008},
   number={1},
   pages={119--158, 184},
   issn={1609-3321},
   review={\MR{2422270 (2010b:20081)}},
}

\bib{S}{article}{
   author={Shan, Peng},
   title={Crystals of Fock spaces and cyclotomic rational double affine Hecke algebras},
   journal={ math.arXiv:0811.4549},
   date={},
}

\bib{StWe}{article}{
   author={Stroppel, Catharina},
   author={Webster, Ben},
   title={Quiver Schur algebras and $q$-Fock space}
   journal={mathArXiv 1110.1115},
   volume={},
   date={},
   number={},
   pages={},
   issn={},
   review={},
}		

\bib{VV1}{article}{
   author={Varagnolo, Michela},
   author={Vasserot, Eric},
   title={On the decomposition matrices of the quantized Schur algebra},
   journal={Duke Math. J.},
   volume={100},
   date={1999},
   number={2},
   pages={267--297},
   issn={0012-7094},
   review={\MR{1722955 (2001c:17029)}},
   doi={10.1215/S0012-7094-99-10010-X},
}

\bib{U}{article}{
   author={Uglov, Denis},
   title={Canonical bases of higher-level $q$-deformed Fock spaces and
   Kazhdan-Lusztig polynomials},
   conference={
      title={Physical combinatorics},
      address={Kyoto},
      date={1999},
   },
   book={
      series={Progr. Math.},
      volume={191},
      publisher={Birkh\"auser Boston},
      place={Boston, MA},
   },
   date={2000},
   pages={249--299},
   review={\MR{1768086 (2001k:17030)}},
}


\bib{Y}{article}{
   author={Yvonne, Xavier},
   title={A conjecture for $q$-decomposition matrices of cyclotomic
   $v$-Schur algebras},
   journal={J. Algebra},
   volume={304},
   date={2006},
   number={1},
   pages={419--456},
   issn={0021-8693},
   review={\MR{2256400 (2008d:16051)}},
   doi={10.1016/j.jalgebra.2006.03.048},
}		

\end{biblist}
\end{bibdiv}


\end{document}